\documentclass[11pt]{amsart}
\usepackage[utf8]{inputenc}
\usepackage[margin=1.25in]{geometry}
\usepackage{amssymb}       
\usepackage{amsmath}
\usepackage{amsthm}
\usepackage{cite}
\usepackage{enumerate}
\usepackage{xcolor}

\newcommand{\R}{{\mathbb R}}

\newcommand{\E}{{\mathbb E}}
\newcommand{\ds}{\displaystyle}


\numberwithin{equation}{section}
\theoremstyle{plain}
\newtheorem{theorem}{Theorem}[section]
 
\newtheorem{lemma}[theorem]{Lemma}

\newtheorem{proposition}[theorem]{Proposition}
\newtheorem*{theorem*}{Theorem}
\newtheorem*{lemma*}{Lemma}

\theoremstyle{definition} 
\newtheorem{defi}[theorem]{Definition}
\newtheorem{hyp}{Hypothesis}

\theoremstyle{remark}

\newtheorem{remark}[theorem]{Remark}

\setlength{\parindent}{1cm}
\title[Large deviations for fast transport stochastic RDEs]{Large deviations for fast transport stochastic RDEs with applications to the exit problem}
\author{ Sandra Cerrai and Nicholas Paskal}
\address{Department of Mathematics, University of Maryland, College Park, USA}
\email{cerrai@umd.edu, npaskal@umd.edu}
\date{}

\begin{document}

\def\le{\left}
\def\r{\right}
\def\cost{\mbox{const}}
\def\a{\alpha}
\def\d{\delta}
\def\ph{\varphi}
\def\e{\epsilon}
\def\la{\lambda}
\def\si{\sigma}
\def\La{\Lambda}
\def\B{{\cal B}}
\def\A{{\mathcal A}}
\def\L{{\mathcal L}}
\def\O{{\mathcal O}}
\def\bO{\overline{{\mathcal O}}}
\def\F{{\mathcal F}}
\def\K{{\mathcal K}}
\def\H{{\mathcal H}}
\def\D{{\mathcal D}}
\def\C{{\mathcal C}}
\def\M{{\mathcal M}}
\def\N{{\mathcal N}}
\def\G{{\mathcal G}}
\def\T{{\mathcal T}}
\def\R{{\mathbb R}}
\def\I{{\mathcal I}}

\def\bw{\overline{W}}
\def\phin{\|\varphi\|_{0}}
\def\s0t{\sup_{t \in [0,T]}}
\def\lt{\lim_{t\rightarrow 0}}
\def\iot{\int_{0}^{t}}
\def\ioi{\int_0^{+\infty}}
\def\ds{\displaystyle}
\def\pag{\vfill\eject}
\def\fine{\par\vfill\supereject\end}
\def\acapo{\hfill\break}

\def\beq{\begin{equation}}
\def\eeq{\end{equation}}
\def\barr{\begin{array}}
\def\earr{\end{array}}
\def\vs{\vspace{.1in}   \\}
\def\rd{\reals\,^{d}}
\def\rn{\reals\,^{n}}
\def\rr{\reals\,^{r}}
\def\bD{\overline{{\mathcal D}}}

\newcommand{\red}[1]{\textcolor{red}{#1}}

\maketitle

\section{Introduction}

\begin{abstract}
We study reaction diffusion equations with a deterministic reaction term as well as two random reaction terms, one that acts on the interior of the domain, and another that acts only on the boundary of the domain. We are interested in the regime where the relative sizes of the diffusion and reaction terms are different. Specifically, we consider the case where the diffusion rate is much larger than the rate of reaction, and the deterministic rate of reaction is much larger than either of the random rate of reactions.
\end{abstract}

\bigskip

In this paper we are dealing with equations that describe the evolution of concentrations of various components in a chemical reaction, subject to random influences. We consider the case the concentration is not constant in space in the volume where the reaction takes place, so that the change of concentration due to the spacial transport has to be taken into consideration. Moreover, we assume that random changes in time and space of the rates of reaction occur. All of this leads us  to consider stochastic reaction-diffusion equations in multi-dimentional spatial domains. As a matter of fact, we are considering here the case the noise has an impact not only on the domain of the system, but also on its boundary. As mentioned in \cite{BM2}, classical important examples are heat transfer in a solid in contact with a fluid, chemical reactor theory, colloid and interface chemistry, and analysis of the interactions between air and water on the surfaces of oceans. 
In applications, it is also important to deal with systems where the rates of chemical reactions and the diffusion coefficients have different orders. Here, we are dealing with the regime where the relative size of the diffusion is much larger than the   rates of reaction and the deterministic rate of reaction is much larger than the stochastic rate of reaction.

More precisely, we are considering   the following class of stochastic reaction-diffusion equations,
\begin{equation}\label{eq:main-intro}
\le\{\begin{array}{l}
\ds{ \frac{\partial u_\epsilon}{\partial t}(t,\xi) = \e^{-1}\,\mathcal{A}u_\epsilon(t,\xi) + f(t,\xi,u_\epsilon(t,\xi))  + \alpha(\epsilon)\, g(t,\xi,u_\epsilon(t,\xi)) \frac{\partial w^Q}{\partial t}(t,\xi), \ \  \xi \in \mathcal{O},} \\
\vs
 \ds { \frac{\partial u_\epsilon}{\partial \nu}(t,\xi) = \e\,\beta(\epsilon)\, \sigma(t,\xi) \frac{\partial w^B}{\partial t}(t,\xi), \ \  \xi \in \partial \mathcal{O}, \ \ \ \ \ \  \ds u_\epsilon(0,\xi) = x(\xi), \ \  \xi \in \mathcal{O},} 
 \end{array}\r.
\end{equation}
for  $0 <  \epsilon \ll 1$ and for some positive functions $\a(\e)$ and $\beta(\e)$, both converging to zero, as $\e\to 0$. Here, $\mathcal{O}$ is a bounded domain in $\R^d$, $d\geq 1$,  with a smooth boundary, $\mathcal{A}$ is a  uniformly elliptic second order differential operator, and $\partial/\partial \nu$ is the associated co-normal derivative acting at $\partial \mathcal{O}$. 
The coefficients $f,g:[0,\infty) \times \mathcal{O} \times \R$ satisfy a Lipschitz condition with respect to the third variable. The noises $w^Q$ and $w^B$ are cylindrical Wiener processes valued in $H = L^2(\mathcal{O})$ and $Z = L^2(\partial \mathcal{O})$, respectively, with covariances $Q \in \mathcal{L}^+(H)$ and $B \in \mathcal{L}^+(Z)$. If $d=1$, we can handle space-time white noise, while for $d \geq 2$ we must suppose the noise to be colored in space.

We assume here that the diffusion $X_t$ associated with the operator $\mathcal{A}$, endowed with the co-normal boundary condition, admits a unique invariant measure $\mu$ and   a spectral gap occurs. Namely, there exists some $\gamma>0$ such that for any $h \in L^2(\mathcal{O},\mu)$,
\begin{equation*}
\int_{\mathcal{O}}\le| \mathbb{E}_\xi h(X_t) - \langle h,\mu\rangle\r|^2\,d\mu(\xi) \leq c\,e^{-2\,\gamma t} |h|^2_{L^2(\mathcal{O},\mu)},\ \ \ \ t\geq 0.
\end{equation*}

When the deterministic and stochastic rates of reaction are of order one, the fast diffusion disappears and,   in the limit as $\e\to 0$, the effective dynamic is described by  a ordinary stochastic differential equation. In fact, in \cite{cerrai2011} (see also \cite{BM2}) it was shown that, for every $0<\d<T$ and $p\geq 1$, the solutions $u^x_\e$ to $\eqref{eq:main-intro}$, corresponding to  $\alpha(\e)=\beta(\e)=1$,  converge  in $L^p(\Omega;C([\delta,T];L^2(\mathcal{O},\mu)))$ to the solution of the averaged one-dimensional stochastic differential equation
\begin{equation}\label{eq:intro_avSDE}
du(t) = \bar{F}(t,u(t))dt + \bar{G}(t,u(t)) dw^Q(t) + \bar{\Sigma}(t) dw^B(t), \ \ \ \ 
u(0) = \langle x,\mu\rangle.
\end{equation} 
Here, $\bar{F}$, $\bar{G}$, and $\bar{\Sigma}$ are all obtained by taking suitable spatial averages  of their counterparts, $f, g$ and $\sigma$, with respect to the invariant measure $\mu$. Since the averaging still takes time, convergence in $C([0,T];L^2(\mathcal{O},\mu))$ only occurs if the initial condition $x$ is already constant in space. 

In this paper, we are interested in studying the fast transport approximation described above in the small noise regime (i.e. $\a(\e)\to 0$ and $\beta(\e)\to 0$). In this case, the noisy terms vanish entirely from the limit and the solution to $\eqref{eq:main-intro}$ converges in $L^p(\Omega;C([\delta,T];L^2(\mathcal{O},\mu)))$  to the solution of the ODE
\begin{equation}\label{eq:intro_avODE}
\ds \frac{du}{dt} = \bar{F}(t,u(t)), \ \ \ \ \ u(0) = \langle x,\mu\rangle.
\end{equation}
Thus, we believe it is of interest to study the validity of a large deviation principle for the family $\{u^x_\e\}_{\e>0}$  in the space $C([\delta,T];L^2(\mathcal{O},\mu))$, and, in particular, to understand its interplay with the fast transport limit. It turns out that, depending on the following different scalings between $\a(\e)$ and $\beta(\e)$
\[\lim_{\e\to 0}\,\frac{\beta(\e)}{\a(\e)}=\bar{\rho} \in\,[0,+\infty],
\]
 the action functional and the speed governing the large deviation principle for equation \eqref{eq:main-intro} are precisely the same as those governing the large deviation principle for the SDE 
\[du(t)=\bar{F}(u(t))\,dt+\le(\a(\e)+\beta(\e)\r) \sqrt{\mathcal{H}_{\bar{\rho}}(t,u(t))}\,d\beta_t,\ \ \ \ \ u(0)=\langle x,\mu\rangle,\]
where 
\begin{equation}
\label{f500}
\mathcal{H}_{\bar{\rho}}(t,u)=\frac 1{(1+\bar{\rho})^2}\le[\le|\sqrt{Q}\le[G(t,u)m \r]\r|_H^2 + \bar{\rho}^2\,\le|\delta_0 \sqrt{B}\le[\Sigma(t) N_{\delta_0}^* m\r] \r|_Z^2\r],
\end{equation}
(here $m$ is the density of the invariant measure $\mu$). This means in particular that the fast transport asymptotics for equation \eqref{eq:main-intro}  is consistent with the small noise limit.

\medskip

In the second part of the paper, we study the problem of the exit of the solutions $u_\epsilon^x$ to $\eqref{eq:main-intro}$ from a domain $D$ in the functional space $L^2(\mathcal{O},\mu)$. We consider the case where the limiting equation $\eqref{eq:intro_avODE}$ has an attractive equilibrium at $0$, and we  prove Freidlin-Wentzell type exit time estimates. More precisely, if we define
\begin{equation*}
\tau_\epsilon^x := \inf \{t \geq 0: u_\epsilon^x(t) \in \partial D \},
\end{equation*}
then we show that for any initial condition $x \in D \subset L^2(\mathcal{O},\mu)$,
\begin{equation}\label{eq:intro_exit}
\lim_{\epsilon \to 0} \le(\a(\epsilon)+\beta(\e)\r)^2 \log \mathbb{E} \tau^x_\epsilon = \inf_{y \in \partial D} V(y),
\end{equation}
where $V:L^2(\mathcal{O},\mu) \to \R^+$ is the quasi-potential corresponding to the action functional governing the large deviation principle. If the interior noise is additive, i.e. $g\equiv 1$  in $\eqref{eq:main-intro}$, then the quasi-potential can be written explicitly. Namely,
\begin{equation*}
V(y) = -2\,\mathcal{H}_{\bar{\rho}}^{-1} \int_0^y \bar{F}(r)dr.
\end{equation*}
where $\mathcal{H}_{\bar{\rho}}$ is obtained from \eqref{f500} by setting $G(t,u)=\text{Id}$ and by assuming $\Sigma$ constant in time. For example, when $\mathcal{A}$ is a divergence type operator, we have $m=|\mathcal{O}|^{-1}$ and hence
\[V(y)=-\frac{(1+\bar{\rho})^2}{c_1+c_2\,\bar{\rho}^2}\int_0^y\int_{\mathcal{O}}f(\xi,\si)\,d\xi\,d\si,\]
for some non-negative constants $c_1$ and $c_2$, depending on $Q$ and $B$, and not simultaneously zero.
In the general case of multiplicative noise, we do not have such an explicit representation of the quasi-potential; however, the result $\eqref{eq:intro_exit}$ still holds.
Notice that, as far as we know, this is the first time a result as \eqref{eq:intro_exit} is obtained for an SPDE with multiple scales, where not only the small noise, but also other asymptotics (in this case the fast transport) have to be taken into consideration. 

As known, in order to obtain results like \eqref{eq:intro_exit}, a large deviation principle that is uniform with respect to initial conditions in a bounded set of $L^2(\mathcal{O},\mu)$ is needed. Here, we prove the large deviation principle for the family $\{u^x_\e\}_{\e>0}$ in the space $C([\d,T];L^2(\O,\mu))$ by using the weak convergence approach, as developed for SPDEs in \cite{budhiraja2008}. This method allows to prove a  Laplace principle, which is uniform with respect to initial conditions in a compact set of $H$ and which, as well known, is equivalent to the large deviation principle with the same speed and action functional. Thus, in order to prove \eqref{eq:intro_exit}, we have first to prove that the Laplace principle is uniform with respect to initial conditions on a bounded set of $L^2(\mathcal{O},\mu)$ and then we have to show that a  Laplace principle, that is uniform with respect to initial conditions on a bounded set, implies a large deviation principle that is uniform with respect to initial conditions on the same bounded set. 

In our case the solutions of equation \eqref{eq:main-intro} are converging to a one-dimensional equation, and the problem of proving a uniform Laplace principle for initial data on a bounded set is circumvented by considering the space  $L^2(\mathcal{O})$ endowed with its weak topology.  More delicate is the problem of understanding how the uniform Laplace principle may imply the uniform large deviation principle. To this purpose, recently, in \cite{salins:171207231S}, some conditions have been introduced in order to guarantee, among other things, the equivalence between the uniform Laplace principle and the uniform large deviation principle, with respect to initial conditions in a compact set.  These arguments can be extended in our setting to give uniformity with respect to initial conditions in bounded sets. Thus, our job here is proving that the conditions introduced in \cite{salins:171207231S} are satisfied.

Once we have a large deviation principle that is uniform with respect to initial conditions in a bounded set, we prove $\eqref{eq:intro_exit}$ by adapting the method used in finite dimension  (see Chapter 4, Section 2 of \cite{freidlin1998random} and Chapter 5.7 of \cite{dembo2009large}) to our infinite dimensional setting (see \cite{cf}, \cite{bcf} and \cite{cs} for some previous results in this direction). In our model, several complications arise in obtaining the lower bound of $\mathbb{E} \tau_\epsilon^x$. Actually, when $\epsilon$ is small, equation $\eqref{eq:main-intro}$ behaves like the linear heat equation for $t$ on the order of $\epsilon$. However, for times on the order of $1$, the averaging has already taken place so that the solution is essentially constant in space and evolves according to $\eqref{eq:intro_avODE}$. So to establish any kind of lower bound on the exit time, we require a domain that is both invariant with respect to the semigroup $e^{tA}$ and invariant with respect to trajectories of equation $\eqref{eq:intro_avODE}$.

\section{Notations and preliminaries}

\subsection{Assumptions on the semigroup}
We assume that $\mathcal{O}$ is a bounded domain in $\R^d$, $d\geq 1$,  with a smooth boundary, satisfying the extension and exterior cone properties. We denote $H:= L^2(\mathcal{O})$ and $Z := L^2(\partial \mathcal{O})$, and, for any $\alpha > 0$, we denote $H^\alpha := H^\alpha(\mathcal{O})$ and $Z^\alpha := H^\alpha(\partial \mathcal{O})$. 

We assume that  $\mathcal{A}$ is a second order differential operator of the form
\begin{equation*}
 \mathcal{A}  = \sum_{i,j=1}^d \frac{\partial}{\partial \xi_i}\le( a_{ij}(\xi) \frac{\partial }{\partial \xi_j} \r) + \sum_{i=1}^d b_i(\xi) \frac{\partial }{\partial \xi_i},\ \ \ \ \xi \in\,\mathcal{O}.
\end{equation*}
The matrix  $a(\xi)=[a_{ij}(\xi)]_{i, j}$ is symmetric and all entries $a_{ij}$ are differentiable, with continuous derivatives in $\bar{\mathcal{O}}$. Moreover, there exists some $a_0 > 0$ such that
\begin{equation}
\label{f1}
\inf_{\xi \in \bar{\mathcal{O}}}\,\langle a(\xi) \eta,\eta\rangle  \geq a_0 |\eta|^2, \qquad \eta \in \R^d.
\end{equation}
Finally, the coefficients  $b_i$ are continuous on $\bar{\mathcal{O}}$. 

In what follows, we shall denote by $A$ the realization in $H$ of the differential operator $\mathcal{A}$, endowed with the conormal boundary condition
\[\ds \frac{\partial h }{\partial \nu}(\xi) := \langle a(\xi) \nu(\xi), \nabla h(\xi) \rangle=0,\ \ \ \ \xi \in\,\partial \mathcal{O}.\] 
The operator $A$ generates a strongly continuous analytic semigroup in $H$, which we will denote by $e^{tA}$. Moreover (see  \cite{Lasiecka1980} for a proof)
\begin{equation}\label{eq:sem_domain}
D(A^\alpha) \subseteq H^{2\alpha},\ \ \ \text{ for } \alpha \geq 0,\ \ \ \ \ \ 
D(A^\alpha) = H^{2\alpha} ,\ \ \  \text{ for } 0 \leq \alpha < \frac{3}{4},
\end{equation}

In general, the realization of $\mathcal{A}$ in  $L^p$ spaces under the same boundary conditions will also generate a strongly continuous, analytic semigroup, for $p>1$. It is proved in \cite{davies1990heat} that under the above conditions on $\mathcal{A}$ and $\mathcal{O}$, the semigroup admits an integral kernel $k_t(\xi,\eta)$ that satisfies
\begin{equation}\label{eq:sem_kernel}
0 \leq k_t(\xi,\eta) \leq c\,(t^{-\frac d2}+1), \qquad t > 0, 
\end{equation}

In what follows, we shall assume that  $e^{t A}$ satisfies the following condition.
\begin{hyp}\label{hyp:longterm}
The semigroup $e^{tA}$ admits a unique invariant measure $\mu$, and there exist $\gamma > 0$ and $c > 0$ such that, for any $h \in L^2(\mathcal{O},\mu)$ and $t \geq 0$,
\begin{equation}\label{eq:longterm}
\le|e^{tA}h - \int_\mathcal{O} h(\xi) d\mu(\xi) \r|_{L^2(\mathcal{O},\mu)} \leq c\, e^{-\gamma t} |h|_{L^2(\mathcal{O},\mu)}.
\end{equation}
In what follows, we shall denote 
\[H_\mu:=L^2(\mathcal{O},\mu),\ \ \ \ \ds \langle h,\mu \rangle := \int_\mathcal{O} h(\xi)d \mu(\xi),\ \ h\in H_\mu.\] 
\end{hyp}
\begin{remark} Hypothesis \ref{hyp:longterm} is satisfied  for example if $\mathcal{A}$ is a divergence-type operator. Actually, in this case the Lebesgue measure is invariant under the semigroup $e^{tA}$, so that we  can define \[\mu = |\mathcal{O}|^{-1}\la_d,\]
where $\la_d$ is the Lebesgue measure on $\mathbb{R}^d$.
 Since $A$ is self-adjoint, we can find a complete orthonormal system $\{e_k\}_{k\geq 0}$ in $H$, and an increasing nonnegative sequence  $\{\alpha_k\}_{k\geq 0}$ such that $Ae_k = -\alpha_k e_k$. Clearly, $\alpha_0 = 0$ and $e_0 = |\mathcal{O}|^{-1/2}$, so that   $\langle x,\mu\rangle=\langle x,e_0\rangle_H\,e_0$, for any $x \in H$.
This implies that
\begin{equation*}
|e^{tA}x - \langle x, \mu \rangle |_{H_\mu}^2 = |\mathcal{O}|^{-1} \sum_{i=1}^\infty e^{-2t\alpha_i} \langle x, e_i \rangle_H^2 \leq e^{-2t \alpha_1} |x|_{H_\mu}^2,
\end{equation*}
so that  $\eqref{eq:longterm}$ holds for $\gamma = \alpha_1$.
\begin{flushright}
$\Box$
\end{flushright}

\end{remark}
\begin{remark}\label{rem:embedding} We have the continuous embedding $H \hookrightarrow H_\mu$.
This follows from the invariance of $\mu$ with respect to $e^{tA}$, and from the boundedness of the integral kernel \eqref{eq:sem_kernel}. Actually, for $h \in H$, we have 
\[|h|_{H_\mu}^2    = \int_\mathcal{O} e^{1A} |h|^2(\xi) d\mu(\xi) = \int_\mathcal{O} \int_\mathcal{O}  k_1(\xi,\eta)|h(\eta)|^2 d\eta\, d\mu(\xi) \leq c |h|_H^2.\]
We also note that, due to the invariance of $\mu$,  $e^{tA}$ acts as a contraction in $H_\mu$
\[\begin{array}{l}
\ds{|e^{tA}h|_{H_\mu}^2  \leq  \int_\mathcal{O} e^{t A}\,|h(\xi)|^2\, d\mu(\xi) =  |h|_{H_\mu}^2.}
\end{array}\]
\begin{flushright}
$\Box$
\end{flushright}

\end{remark}

\begin{remark}
\label{rem2.3}
In fact, one can show that the invariant measure $\mu$ is absolutely continuous with respect to the Lebesgue measure on $\mathcal{O}$ and has a nonnegative density $m \in L^\infty(\mathcal{O})$ (for a proof, see \cite{cerrai2011}).
\begin{flushright}
$\Box$
\end{flushright}

\end{remark}

\subsection{Assumptions on the coefficients and noise}
Concerning the coefficients $f,g,$ and $\sigma$, we make the following assumptions.
\begin{hyp}\label{hyp:coeff} \
\begin{enumerate}[(i)]
\item The mappings $f,g:[0,\infty) \times \mathcal{O} \times \R \to \R$ are measurable and Lipschitz continuous in the third variable, uniformly with respect to $(t,\xi) \in [0,T] \times \mathcal{O}$, for any fixed $T > 0$. In addition, for any $T > 0$, $f$ and $g$ satisfy
\[\sup_{0 \leq t \leq T} |f(t,\cdot,0)|_{L^\infty(\mathcal{O})} < +\infty,\ \ \ \ \ 
\sup_{0 \leq t \leq T} |g(t,\cdot,0)|_{L^\infty(\mathcal{O})} < +\infty,
\]
\item The mapping $\sigma:[0,\infty) \times \partial \mathcal{O} \to \R$ is measurable and satisfies for any $T > 0$
\begin{equation*}
\sup_{0 \leq t \leq T} |\sigma(t,\cdot)|_{L^\infty(\partial \mathcal{O})} < +\infty,
\end{equation*}

\end{enumerate}
\end{hyp}
\noindent In what follows, for $h_1,h_2 \in H$ and $\xi \in \mathcal{O}$, we define
\[ F(t,h_1)(\xi) := f(t,\xi,h_1(\xi)), \]
and
\[ [G(t,h_1)h_2](\xi) := g(t,\xi,h_1(\xi))h_2(\xi).\]
The uniform Lipschitz assumptions on $f$ and $g$ in Hypothesis \ref{hyp:coeff} imply that the mappings $F(t,\cdot):H\to H$, $G(t,\cdot):H \to \mathcal{L}(H,L^1(\mathcal{O}))$, and $G(t,\cdot):H \to \mathcal{L}(L^\infty(\mathcal{O}),H)$ are all well-defined and Lipschitz continuous, uniformly with respect to $t \in [0,T]$, for any $T > 0$. 

Next, for $z \in Z$ and $\xi \in \partial \mathcal{O}$, we set
\begin{equation*} [\Sigma(t)z](\xi):=\sigma(t,\xi)z(\xi).
\end{equation*}
Hypothesis \ref{hyp:coeff} implies that $\Sigma(t) \in \mathcal{L}(Z)$ and $\ds \sup_{0 \leq t \leq T} \left\lVert \Sigma(t) \right\rVert_{\mathcal{L}(Z)} < \infty$.

 Concerning the noisy terms, we assume that $w^Q(t)$ and $w^B(t)$ are cylindrical Wiener processes in $H$ and $Z$,  with covariances $Q \in \mathcal{L}^+(H)$ and $B \in \mathcal{L}^+(Z)$, respectively. That is,
\[w^Q(t) = \sum_{k =0}^\infty \sqrt{Q} e_k\beta_k(t),\ \ \ \ \ \ w^B(t) = \sum_{k =0}^\infty \sqrt{B} f_k \tilde{\beta}_k(t),\]
where $\{e_k\}_{k\geq 0}$ is an orthonormal basis of $H$, $\{f_k\}_{k\geq 0}$ is an orthonormal basis of $Z$ and $\{\beta_k(t)\}_{k\geq 0}$ and $\{\tilde{\beta}_k(t)\}_{k\geq 0}$ are sequences of independent real-valued Brownian motions defined on a common stochastic basis $(\Omega, \mathcal{F},\{\mathcal{F}_t\}_{t\geq 0},\mathbb{P})$. 

We assume for simplicity that $\{e_k\}_{k\geq 0}$ diagonalizes $\sqrt{Q}$ with eigenvalues $\{\lambda_k\}_{k\geq 0}$, and $\{f_k\}_{k\geq 0}$ diagonalizes $\sqrt{B}$ with eigenvalues $\{\theta_k\}_{k\geq 0}$. We do not assume that the operators $Q$ and $B$ are trace class, so the sums above do not necessarily converge in $H$ and $Z$, respectively. However, both of the sums converge in larger Hilbert spaces containing $H$ and $Z$, respectively, with Hilbert-Schmidt embeddings. 

We make the following assumption regarding the eigenvalues of $Q$ and $B$.
\begin{hyp}\label{hyp:eigenvalues} If $d\geq 2$, then
 there exist $\rho < 2d/(d-2)$ and $\beta < 2d/(d-1)$ such that
\begin{equation}\label{eq:rhohyp}
\sum_{k \in \mathbb{N}} \lambda_k^\rho |e_k|_\infty^2 =: \kappa_Q < \infty,\ \ \ \ \ \sum_{k \in \mathbb{N}} \theta_k^\beta =:\kappa_B < \infty.
\end{equation}
\end{hyp}

\subsection{Mild solutions}
In the present paper, we are dealing with the following class of equations
\begin{equation}\label{eq:main}
\begin{cases} 
\ds{\frac{\partial u_\epsilon}{\partial t}(t,\xi) = \e^{-1}\,\mathcal{A}u_\epsilon(t,\xi) + f(t,\xi,u_\epsilon(t,\xi))  + \alpha(\epsilon)\, g(t,\xi,u_\epsilon(t,\xi)) \frac{\partial w^Q}{\partial t}(t,\xi), \ \  \xi \in \mathcal{O}, }\\
\vspace{1mm}\\
 \ds {\frac{\partial u_\epsilon}{\partial \nu}(t,\xi) = \e\,\beta(\epsilon)\, \sigma(t,\xi) \frac{\partial w^B}{\partial t}(t,\xi), \ \  \xi \in \partial \mathcal{O}, \ \ \ \ \ \  \ds u_\epsilon(0,\xi) = x(\xi), \ \  \xi \in \mathcal{O}.} \end{cases}
\end{equation}

 Under the above assumptions on the differential operator $\mathcal{A}$ and the domain $\mathcal{O}$, it can be shown (see \cite{lions1972non}), that there exists $\delta_0 \in \R$ such that for any $\delta \geq \delta_0$ and $h \in Z$, the  elliptic boundary value problem
\[(\delta - \mathcal{A})u(\xi) = 0,\ \ \ \ \xi \in \mathcal{O},\ \ \ \ \frac{\partial u}{\partial \nu} = h(\xi),\ \ \ \ \xi \in \partial \mathcal{O},\]
admits a unique solution $u \in H$. We define the Neumann map, $N_\delta:Z \to H$, to be the solution map of this equation, i.e. $N_\delta h := u$. One can show that
\begin{equation}\label{eq:Neuman_bounded}
N_\delta \in \mathcal{L}(Z^\alpha, H^{\alpha + 3/2}).
\end{equation}
Next, we consider the deterministic parabolic problem 
\begin{equation*}
\begin{cases}
\ds {\frac{\partial y}{\partial t}(t,\xi) = \mathcal{A} y(t,\xi),\ \ \ \xi \in \mathcal{O} }\\
\vspace{.1mm}\\
 \ds {
\frac{\partial y}{\partial \nu} = v(t,\xi), \ \ \  \xi \in \partial \mathcal{O},\ \ \ \ \  
y(0,\xi) =0,  \ \ \ \xi \in \mathcal{O}.}
\end{cases}
\end{equation*}
One can show that for smooth $v$ and large enough $\delta$, the solution to this equation is given explicitly by
\begin{equation*}
y(t) = (\delta - A) \int_0^t e^{(t-s)A}N_{\delta}v(s)ds.
\end{equation*} 
This formula can be extended by continuity to provide a notion of mild solution for less regular $v$. In our case, we are interested in the boundary value problem
\begin{equation}\label{eq:Neuman_stoch_prob}
\begin{cases}
\ds{ \frac{\partial y}{\partial t}(t,\xi) = \frac{1}{\epsilon}\mathcal{A} y(t,\xi),\ \ \  \xi \in \mathcal{O}, }\\
 \vspace{.1mm}\\
\ds {\frac{\partial y}{\partial \nu} = \epsilon\,\beta(\e)\,\sigma(t,\xi) \frac{\partial w^B}{\partial t}(t,\xi), \ \ \  \xi \in \partial \mathcal{O}, \ \ \ \ \ y(0,\xi) =0,  \ \ \ \xi \in \mathcal{O},}
\end{cases}
\end{equation}
So, upon taking $\delta = \delta_0/\epsilon$, we say that the process
\begin{equation*}
\beta(\epsilon)\, w_{A,B}^\epsilon(t) := \beta(\epsilon)\,(\delta_0 - A) \int_0^t e^{(t-s)\frac A\e} N_{\delta_0}[\Sigma(s)\,dw^B(s)]
\end{equation*}
is a mild solution to problem $\eqref{eq:Neuman_stoch_prob}$.  (see \cite{dapratozabczyk1996} for details, and see \cite{fw-92}, \cite{SV} and \cite{sowers} for other papers where the same type of equations has been studied). This motivates the following.

\begin{defi}
Let $ p \geq 1$ and $T > 0$. An adapted process $u_\e \in L^p(\Omega;C([0,T];H))$ is called a mild solution to problem $\eqref{eq:main}$ if, for any $ t \in [0,T]$,
\begin{equation*}
u_\e(t) = e^{t\frac A\e}x + \int_0^t e^{(t-s)\frac A\e}F(s,u_\e(s))ds + \a(\epsilon)\, w_{A,Q}^\epsilon(u_\e)(t) + \beta(\epsilon)\,w_{A,B}^\epsilon(t),
\end{equation*}
where, for any $u \in L^p(\Omega;C([0,T];H))$, we define
\begin{equation*}
w_{A,Q}^\epsilon(u)(t):= \int_0^t e^{(t-s)\frac A\e}G(s,u(s))dw^Q(s).
\end{equation*}
\end{defi}

\subsection{Well-posedness and averaging results} In this section, we recall some  important preliminary results from \cite{cerrai2011}. 

\begin{lemma*}[\textbf{3.1 of \cite{cerrai2011}}]
 Assume Hypotheses \ref{hyp:coeff} and \ref{hyp:eigenvalues} hold. Then, for any $\e>0$, $p \geq 1$ and $T > 0$, the process $w_{A,B}^\epsilon$ belongs to $L^p(\Omega;C([0,T];H))$ and satisfies
\begin{equation}\label{eq:Bconv_bound}
\sup_{\epsilon \in (0,1]} \mathbb{E} |w_{A,B}^\epsilon|^p_{C([0,T];H)} < +\infty.
\end{equation}
\end{lemma*}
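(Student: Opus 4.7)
The plan is to apply the Da Prato--Kwapie\'n--Zabczyk factorization method together with an explicit Hilbert--Schmidt computation that exploits the adjoint of the Neumann map $N_{\delta_0}$. First I would fix $\alpha\in (0,1/2)$ and write
\[
w_{A,B}^\epsilon(t) \;=\; \frac{\sin \pi\alpha}{\pi}\int_0^t (t-s)^{\alpha-1}\, e^{(t-s) A/\epsilon}\, Y^\epsilon_\alpha(s)\, ds, \qquad Y^\epsilon_\alpha(s) := \int_0^s (s-r)^{-\alpha} (\delta_0 - A)\, e^{(s-r) A/\epsilon} N_{\delta_0}[\Sigma(r)\, dw^B(r)].
\]
A standard deterministic argument, using that $e^{\tau A/\epsilon}$ acts as a contraction on $H$ (Remark \ref{rem:embedding}), gives $\mathbb{E}|w_{A,B}^\epsilon|^p_{C([0,T];H)} \le C_{\alpha,p,T}\,\mathbb{E}\int_0^T |Y^\epsilon_\alpha(s)|_H^q ds$ for any $q>1/\alpha$. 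Since $Y^\epsilon_\alpha(s)$ is a centered Gaussian element of $H$, its $L^q$-moments are controlled by its variance, so everything reduces to showing $\sup_{\epsilon\in (0,1]} \mathbb{E}|Y_\alpha^\epsilon(s)|_H^2 < \infty$, uniformly for $s\in [0,T]$.

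The heart of the argument is a clean identity for the Hilbert--Schmidt norm of the integrand. Expanding in the eigenbasis $\{e_j\}$ with $Ae_j=-\alpha_j e_j$, and using the explicit adjoint formula $N_{\delta_0}^* e_j = (\delta_0+\alpha_j)^{-1}(e_j|_{\partial\mathcal{O}})$ derived from Green's identity, the unbounded factor $(\delta_0+\alpha_j)$ coming from $(\delta_0-A) e_j$ is precisely cancelled by the $(\delta_0+\alpha_j)^{-1}$ from $N_{\delta_0}^*$, yielding
\[
\bigl\|(\delta_0 - A) e^{\tau A/\epsilon} N_{\delta_0}\, \Sigma(r)\, \sqrt{B}\bigr\|_{HS(Z,H)}^2 \;=\; \sum_{j\ge 0} e^{-2\alpha_j \tau/\epsilon}\,\bigl|\sqrt{B}\, \Sigma(r)\,(e_j|_{\partial\mathcal{O}})\bigr|_Z^2.
\]
Plugging this into the variance of $Y_\alpha^\epsilon(s)$ and using the elementary bound $\int_0^s (s-r)^{-2\alpha}\,e^{-2\alpha_j(s-r)/\epsilon}\, dr \le C_\alpha\,\min\bigl(s^{1-2\alpha},(\epsilon/\alpha_j)^{1-2\alpha}\bigr)$ produces an $\epsilon$-uniform dominant (since $\epsilon\le 1$), and reduces matters to the series estimate
\[
\sum_{j\ge 0} \min\!\bigl(1,\alpha_j^{-(1-2\alpha)}\bigr)\,\bigl|\sqrt{B}\,\Sigma(r)(e_j|_{\partial\mathcal{O}})\bigr|_Z^2 \;<\; \infty.
\]

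The main obstacle is verifying the last display. I would bound $|\sqrt{B}\,\Sigma(r)(e_j|_{\partial\mathcal{O}})|_Z^2$ by combining the Sobolev trace theorem, giving $|e_j|_{\partial\mathcal{O}}|_{Z^\gamma}\lesssim (1+\alpha_j)^{\gamma/2 + 1/4}$ for $\gamma$ below the trace threshold, with the mapping properties of $\sqrt{B}$ provided by Hypothesis \ref{hyp:eigenvalues}; specifically, $\sum_k \theta_k^\beta<\infty$ for some $\beta<2d/(d-1)$ forces $\sqrt{B}$ to be bounded on $Z$ and, via an interpolation between $\ell^\beta$ and $\ell^\infty$ summability, yields enough decay in $j$ to make the series converge once $\alpha\in (0,1/2)$ is chosen close enough to $1/2$ depending on $d$ and $\beta$. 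All constants depend only on $\alpha,T$ and the problem data, hence are independent of $\epsilon\in (0,1]$. The Gaussian property upgrades the $L^2$-bound on $Y_\alpha^\epsilon$ to $L^q$ for any $q$, and continuity of paths is automatic from the factorization representation via Young's inequality for the convolution with the kernel $(t-s)^{\alpha-1}$, concluding the proof.
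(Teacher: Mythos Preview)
Your factorization setup is the right framework, but the Hilbert--Schmidt computation at the center of your argument relies on an assumption the paper does not make. You expand in an orthonormal eigenbasis $\{e_j\}$ of $A$ and invoke the identity $N_{\delta_0}^* e_j = (\delta_0+\alpha_j)^{-1}(e_j|_{\partial\mathcal{O}})$, both of which require $A$ to be self-adjoint. In this paper $\mathcal{A}$ carries a first-order drift $\sum_i b_i\,\partial_{\xi_i}$, so $A$ is in general \emph{not} self-adjoint: there is no orthonormal basis of eigenfunctions to expand in, Green's identity picks up extra drift terms, and the clean cancellation between $(\delta_0-A)$ and $N_{\delta_0}^*$ that your argument hinges on does not occur. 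This is a genuine gap, not a cosmetic one.

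The route taken in \cite{cerrai2011} (and visible throughout the present paper whenever the boundary term is handled, see e.g.\ \eqref{sro}--\eqref{eq:Srho_identity}) avoids any spectral decomposition of $A$. One uses the elliptic regularity $N_{\delta_0}\in\mathcal{L}(Z,H^{3/2})$ together with the domain identification $D(A^\gamma)=H^{2\gamma}$ for $\gamma<3/4$ to see that $S_\rho:=(\delta_0-A)^{(3-\rho)/4}N_{\delta_0}\in\mathcal{L}(Z,H)$ for every $\rho>0$, and then writes
\[
(\delta_0-A)\,e^{\tau A/\epsilon}\,N_{\delta_0} \;=\; e^{\tau A/(2\epsilon)}\,(\delta_0-A)^{(1+\rho)/4}\,e^{\tau A/(2\epsilon)}\,S_\rho .
\]
The analytic-semigroup bound $\|(\delta_0-A)^{(1+\rho)/4}e^{\tau A/(2\epsilon)}\|\le c\,(1+(\tau/\epsilon)^{-(1+\rho)/4})$ replaces your exponential factors $e^{-\alpha_j\tau/\epsilon}$, and the Hilbert--Schmidt estimate is then carried out by summing over the eigenbasis $\{f_k\}$ of $B$ and applying H\"older with the exponent $\beta<2d/(d-1)$ from Hypothesis~\ref{hyp:eigenvalues}. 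No trace-theorem growth bounds on $e_j|_{\partial\mathcal{O}}$ are needed. Incidentally, even in the self-adjoint case your final series step is not justified as written: the summability hypothesis concerns the $\theta_k$ indexed by the basis $\{f_k\}$ of $Z$, whereas your series runs over the $e_j$ indexed by the basis of $H$, and the claimed ``interpolation between $\ell^\beta$ and $\ell^\infty$'' does not by itself bridge the two.
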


\begin{lemma*}[\textbf{3.3 of \cite{cerrai2011}}] 
Assume Hypotheses \ref{hyp:coeff} and \ref{hyp:eigenvalues} hold. Then for any $\e>0$, $p \geq 1$ and $T > 0$, the mapping $w_{A,Q}^\epsilon(\cdot)$ maps $L^p(\Omega; C([0,T];H)$ into itself and satisfies
\begin{equation}\label{eq:Qconv_bound}
\sup_{\epsilon \in (0,1]} \mathbb{E} |w_{A,Q}^\epsilon(u)|^p_{C([0,T];H)} \leq c_{T,p} \le(1 + \mathbb{E}\int_0^T |u(s)|_H^p ds \r).
\end{equation}
Moreover, it is Lipschitz continuous and
\begin{equation}\label{eq:Qconv_lip}
\sup_{\epsilon \in (0,1]}|w_{A,Q}^\epsilon(u) - w_{A,Q}^\epsilon(v)|_{L^p(\Omega;C([0,T];H))} \leq L_T |u-v|_{L^p(\Omega;C([0,T];H))},
\end{equation}
for some constant $L_T>0$, independent of $\e \in\,(0,1]$, such that $L_T\to 0$, as $T \to 0$.
\end{lemma*}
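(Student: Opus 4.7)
\medskip

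\noindent \textbf{Proof proposal.} The natural strategy is the factorization method of Da Prato--Kwapień--Zabczyk. For $\alpha \in (1/p, 1/2)$, we write
\begin{equation*}
w_{A,Q}^\epsilon(u)(t) = \frac{\sin(\pi \alpha)}{\pi} \int_0^t (t-s)^{\alpha - 1} e^{(t-s) A/\epsilon} Y_\alpha^\epsilon(s)\,ds,
\end{equation*}
where
\begin{equation*}
Y_\alpha^\epsilon(s) := \int_0^s (s-r)^{-\alpha} e^{(s-r)A/\epsilon} G(r, u(r))\,dw^Q(r).
\end{equation*}
Since $e^{t A/\epsilon}$ is a contraction on $H_\mu$ and a bounded operator on $H$, a direct application of Hölder's inequality in the time variable then yields
\begin{equation*}
\sup_{t \in [0,T]} |w_{A,Q}^\epsilon(u)(t)|_H \leq c_{\alpha,T}\,T^{\alpha - 1/p}\,\Big(\int_0^T |Y_\alpha^\epsilon(s)|_H^p\,ds\Big)^{1/p},
\end{equation*}
so matters reduce to showing that $\mathbb{E}\int_0^T |Y_\alpha^\epsilon(s)|_H^p\,ds$ is bounded by $c_{T,p}(1 + \mathbb{E}\int_0^T |u(s)|_H^p\,ds)$ uniformly in $\epsilon \in (0,1]$.

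The core estimate I would carry out is on the Hilbert--Schmidt norm
\begin{equation*}
\big\|e^{tA/\epsilon} G(r,u(r)) \sqrt{Q}\big\|_{\mathcal{L}_2(H)}^2 = \sum_{k \geq 0} \lambda_k^2\,|e^{tA/\epsilon}(g(r,\cdot, u(r)(\cdot)) e_k)|_H^2.
\end{equation*}
The key point is that, by the heat kernel bound \eqref{eq:sem_kernel} applied to $e^{tA/\epsilon}$ (which has kernel $k_{t/\epsilon}$ and is therefore bounded by $c((t/\epsilon)^{-d/2}+1) \leq c(t^{-d/2}+1)$ for $\epsilon \in (0,1]$), the semigroup $e^{tA/\epsilon}$ maps $L^q(\mathcal{O})$ into $L^\infty(\mathcal{O})$ uniformly in $\epsilon$ with a singular, but integrable, rate in $t$. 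Interpolating with Hölder applied to $g(r,\cdot,u(r))e_k$ and invoking Hypothesis \ref{hyp:eigenvalues} together with the exponent condition $\rho < 2d/(d-2)$ allows one to show
\begin{equation*}
\big\|e^{tA/\epsilon} G(r,u(r)) \sqrt{Q}\big\|_{\mathcal{L}_2(H)}^2 \leq c\,\kappa_Q\,\big(t^{-\zeta} + 1\big)\,(1 + |u(r)|_H^2),
\end{equation*}
for some $\zeta < 1$ depending on $\rho$ and $d$. Choosing $\alpha$ small enough that $2\alpha + \zeta < 1$, the Burkholder--Davis--Gundy inequality then yields the desired $L^p$-estimate on $Y_\alpha^\epsilon(s)$ uniformly in $\epsilon$, and chaining the two inequalities gives \eqref{eq:Qconv_bound}.

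For the Lipschitz estimate \eqref{eq:Qconv_lip}, I would repeat the same factorization argument applied to the difference $w_{A,Q}^\epsilon(u) - w_{A,Q}^\epsilon(v)$, whose integrand is $e^{(t-s)A/\epsilon}(G(s,u(s)) - G(s,v(s)))$. By Hypothesis \ref{hyp:coeff}, $G(s,h_1) - G(s,h_2)$ satisfies, pointwise,
\begin{equation*}
|[(G(s,h_1) - G(s,h_2))e_k](\xi)| \leq \mathrm{Lip}(g)\,|h_1(\xi) - h_2(\xi)|\,|e_k(\xi)|,
\end{equation*}
so the same Hilbert--Schmidt estimate produces a factor $|u(r) - v(r)|_H^2$ instead of $(1+|u(r)|_H^2)$. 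Collecting the powers of $T$ generated by the Hölder step $T^{\alpha - 1/p}$ and by the integration of the singular kernel $(t-s)^{\alpha-1}$ over $[0,T]$ yields a constant $L_T$ of the form $c\,T^{\eta}$ with $\eta > 0$, which evidently vanishes as $T \to 0$.

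The main obstacle, as I see it, is the uniformity in $\epsilon$ of the Hilbert--Schmidt estimate: one has to exploit that accelerating time in the kernel via $t \mapsto t/\epsilon$ only improves the pointwise bound for $\epsilon \leq 1$, and then carefully balance the resulting singularity $t^{-\zeta}$ against the factorization exponent $\alpha$ and the integrability index $p$ so that all time integrals converge. The eigenvalue condition \eqref{eq:rhohyp} is tailored precisely to make this balance possible in dimension $d$. Once this is in place, the remainder of the argument is a standard application of BDG, Minkowski and Hölder.
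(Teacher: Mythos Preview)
Your approach is correct and is precisely the standard one. Note, however, that the present paper does not actually prove this lemma: it is quoted verbatim from \cite{cerrai2011} and used as a black box. So there is no ``paper's own proof'' here to compare against line by line.

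That said, your outline matches both the argument in \cite{cerrai2011} and the techniques this paper itself deploys elsewhere. The factorization formula you write is exactly the device used (it also reappears, for a deterministic integral, in material the authors drafted for the a~priori estimate \eqref{eq:apriori}). The crucial Hilbert--Schmidt bound you isolate,
\[
\big\|e^{tA/\epsilon} G(r,u(r))\sqrt{Q}\big\|_{\mathcal{L}_2(H)}^2 \leq c\,\big(t^{-\zeta}+1\big)\,(1+|u(r)|_H^2),\qquad \zeta<1,
\]
is essentially the $H$-norm analogue of the estimate \eqref{eq:Glemma_3} proved in Appendix~A (there in the $H_\mu$-norm, with the explicit exponent $d/\zeta<1$ where $\zeta=2\rho/(\rho-2)$). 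The mechanism is identical: expand along the eigenbasis, use H\"older with the three exponents dictated by \eqref{eq:rhohyp}, and control the diagonal term via the kernel bound \eqref{eq:sem_kernel}. Your observation that replacing $t$ by $t/\epsilon$ only improves the kernel bound for $\epsilon\le 1$ is exactly what gives the uniformity in $\epsilon$. The Lipschitz part and the vanishing of $L_T$ as $T\to 0$ follow, as you say, by tracking the positive power of $T$ produced by the H\"older step.
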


\begin{theorem*}[\textbf{3.4 of \cite{cerrai2011}}]
Assume Hypotheses \ref{hyp:coeff} and \ref{hyp:eigenvalues} hold. Then for any $\e>0$, $p \geq 1$ and $T>0$ and for any initial condition $x \in H$, equation $\eqref{eq:main}$ has a unique adapted mild solution $u^x_\epsilon \in L^p(\Omega;C([0,T];H))$, which satisfies 
\begin{equation}\label{eq:apriori_alpha0}
\sup_{\epsilon \in (0,1]} \mathbb{E} |u^x_\epsilon|^p_{C([0,T];H)} \leq c_{T,p}\,(1 + |x|_H^p).
\end{equation}
\end{theorem*}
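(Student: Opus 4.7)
The plan is to use a Banach fixed-point argument in the space $L^p(\Omega;C([0,T];H))$, first on a short time interval and then extended by iteration to the full interval $[0,T]$, followed by a Gronwall estimate on the mild formulation that yields the a priori bound \eqref{eq:apriori_alpha0} with constants uniform in $\e\in(0,1]$. For fixed $\e\in(0,1]$ and $T>0$, I would consider the map $\K_\e:L^p(\Omega;C([0,T];H))\to L^p(\Omega;C([0,T];H))$ defined by
\begin{equation*}
\K_\e(u)(t):=e^{t A/\e}x+\int_0^t e^{(t-s)A/\e}F(s,u(s))\,ds+\a(\e)\,w_{A,Q}^\e(u)(t)+\beta(\e)\,w_{A,B}^\e(t).
\end{equation*}
That $\K_\e$ is a self-mapping follows from the uniform boundedness of $e^{tA/\e}$ on $H$ (a consequence of the kernel bound \eqref{eq:sem_kernel} together with Hypothesis \ref{hyp:longterm}), from the uniform Lipschitz continuity of $F(t,\cdot):H\to H$ guaranteed by Hypothesis \ref{hyp:coeff}, and from the two recalled lemmas controlling $w_{A,Q}^\e$ and $w_{A,B}^\e$.

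For local existence and uniqueness, I would estimate, for $u,v\in L^p(\Omega;C([0,T_0];H))$,
\begin{equation*}
\bigl|\K_\e(u)-\K_\e(v)\bigr|_{L^p(\Omega;C([0,T_0];H))}\le \bigl(C\,T_0\,L_F+\a(\e)\,L_{T_0}\bigr)\,|u-v|_{L^p(\Omega;C([0,T_0];H))},
\end{equation*}
using the Lipschitz bound on $F$ and the uniform boundedness of the semigroup for the drift term, and \eqref{eq:Qconv_lip} for the stochastic convolution. Since $L_{T_0}\to 0$ as $T_0\to 0$ and all constants are independent of $\e\in(0,1]$ and of the initial datum, I can choose $T_0>0$, independent of $\e$ and $x$, so that $\K_\e$ becomes a strict contraction on $L^p(\Omega;C([0,T_0];H))$; Banach's fixed-point theorem then produces a unique adapted mild solution $u^x_\e$ on $[0,T_0]$. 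Iterating on the consecutive intervals $[jT_0,(j+1)T_0]$, using $u^x_\e(jT_0)$ as the new initial datum at each step, extends the solution uniquely to $[0,T]$.

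For the a priori bound \eqref{eq:apriori_alpha0}, I would take the $C([0,t];H)$ norm and the $p$-th moment in the mild identity. The initial term yields $C|x|_H^p$; the drift, via the Lipschitz property of $F$, the uniform bound on $|f(s,\cdot,0)|_{L^\infty(\mathcal{O})}$, and Jensen's inequality, contributes an integral of the form $C_T\,\E\int_0^t(1+|u^x_\e(s)|_H^p)\,ds$; the $Q$-convolution is controlled via \eqref{eq:Qconv_bound}, and the $B$-convolution via \eqref{eq:Bconv_bound}, both with constants uniform in $\e$ (using that $\a(\e),\beta(\e)$ may be assumed bounded by $1$). Gronwall's lemma then closes the estimate and produces \eqref{eq:apriori_alpha0}.

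The main obstacle is to verify that every constant in this scheme is genuinely uniform in $\e\in(0,1]$, in particular the bound $\sup_{\e\in(0,1]}\sup_{t\ge 0}\|e^{tA/\e}\|_{\mathcal{L}(H)}<\infty$. Such uniformity does not follow from the analyticity of the rescaled semigroup alone; here it is a consequence of the heat-kernel bound \eqref{eq:sem_kernel} together with the ergodic Hypothesis \ref{hyp:longterm}, which together prevent $\|e^{tA/\e}\|_{\mathcal{L}(H)}$ from blowing up either as $t\to 0$ or as $\e\to 0$. Once this is secured, together with the $\e$-uniform estimates already provided by Lemmas 3.1 and 3.3, the contraction constant and the Gronwall inequality both deliver constants depending only on $T$ and $p$, yielding the claimed uniformity.
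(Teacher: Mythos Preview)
Your approach is essentially correct and mirrors what the paper does in the proof of Proposition \ref{prop:wellposed} (the controlled version of this result; the uncontrolled Theorem 3.4 is merely cited from \cite{cerrai2011} without proof here). Both arguments run a contraction mapping on $L^p(\Omega;C([0,T_0];H))$ for small $T_0$, using \eqref{eq:Qconv_lip} and the Lipschitz property of $F$, and then iterate. For the a priori bound the paper uses a short-time bootstrap (show $c_{p,T}\to 0$ as $T\to 0$, absorb, then patch intervals), while you use Gronwall on $t\mapsto \E\sup_{0\le s\le t}|u_\e^x(s)|_H^p$ via \eqref{eq:Qconv_bound}; both routes are standard and yield the same $\e$-uniform constant.

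One point to correct: you justify $\sup_{\e\in(0,1]}\sup_{t\ge 0}\|e^{tA/\e}\|_{\mathcal L(H)}<\infty$ by invoking Hypothesis \ref{hyp:longterm}, but the theorem assumes only Hypotheses \ref{hyp:coeff} and \ref{hyp:eigenvalues}. You do not need the ergodic hypothesis here: uniform boundedness of $\|e^{sA}\|_{\mathcal L(H)}$ for $s\in[0,1]$ is automatic from strong continuity, and for $s\ge 1$ it follows directly from the kernel bound \eqref{eq:sem_kernel} (which holds under the standing assumptions on $\mathcal A$ and $\mathcal O$, independently of Hypothesis \ref{hyp:longterm}). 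With that adjustment your argument is complete.
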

Next, for any $t \geq 0$ and $h \in H_\mu$, we define
\begin{equation*}
\bar{F}(t,h) := \langle F(t,h), \mu \rangle = \int_\mathcal{O} f(t,\xi,h(\xi)) d\mu(\xi).
\end{equation*}
Moreover, for any $t\geq 0$ and $h_1,h_2 \in H_\mu$, we define
\begin{equation}
\bar{G}(t,h_1) h_2 := \langle G(t,h_1) h_2, \mu \rangle = \int_\mathcal{O} g(t,\xi,h_1(\xi)) h_2(\xi)\, d \mu(\xi),
\end{equation}
 and for any $t\geq 0$ and $z \in Z$, we define
\begin{equation*}
\bar{\Sigma}(t)z = \delta_0\, \langle N_{\delta_0} \Sigma(t)z, \mu \rangle = \delta_0 \int_\mathcal{O} N_{\delta_0} [ \sigma(t,\cdot)z](\xi)\, d \mu(\xi).
\end{equation*}

Hypothesis \ref{hyp:coeff} implies that $\bar{F}(t,\cdot):H_\mu \to \R$ is Lipschitz continuous, uniformly with respect to $t \in [0,T]$. Concerning $\bar{G}$, we observe that for any $h \in\,H_\mu$ and $T>0$
\begin{equation}
\label{eq:Gbar_lip}
\begin{array}{l}
\ds{
|\bar{G}(t,h_1)h - \bar{G}(t,h_2)h|^2  \leq |h|^2_{H_\mu} \int_\mathcal{O} |g(t,\xi,h_1(\xi))-g(t,\xi,h_2(\xi))|^2 d\mu(\xi) }\\
 \vspace{.1mm}\\
\ds{ \leq c |h|_{H_\mu}^2 |h_1-h_2|_{H_\mu}^2,\ \ \ \ h_1, h_2 \in\,H_\mu,\ \ t \in\,[0,T].}
\end{array}\end{equation}
Therefore, $\bar{G}(t,\cdot)h:H_\mu \to \R$ is Lipschitz continuous, uniformly with respect to $t \in [0,T]$ and $h$ in a bounded set of $H_\mu$ (and hence $H$). Finally, the linear functional $\bar{\Sigma}(t):Z \to \R$ is bounded due to $\eqref{eq:Neuman_bounded}$. 
\\ \\
With these notations, we introduce the  equation
\begin{equation}\label{eq:averaged_SDE}
\ds{dv^x(t) = \bar{F}(t,v^x(t))dt + \bar{G}(t,v^x(t)) dw^Q(t) + \bar{\Sigma}(t) dw^B(t),\ \ \ \ v^x(0) = \langle x, \mu \rangle.}
\end{equation} 
\begin{theorem*}[\textbf{4.1 of \cite{cerrai2011}}]  Assume that Hypotheses \ref{hyp:longterm}, \ref{hyp:coeff} and \ref{hyp:eigenvalues} hold, and let $\alpha(\e)=\beta(\e)\equiv 1$. Then, for any $x \in H$, $p \geq 1$, and $0<\d<T$, we have
\begin{equation}\label{eq:SDE_limit}
\lim_{\epsilon \to 0} \mathbb{E} \sup_{\delta \leq t \leq T} |v^x_\epsilon(t)-v^x(t)|_{H_\mu}^p = 0,
\end{equation}
where $v^x_\epsilon$ is the mild solution to $\eqref{eq:main}$ with $\alpha(\e)=\beta(\e)\equiv 1$ and $v^x$ is the solution of equation $\eqref{eq:averaged_SDE}$.
\end{theorem*}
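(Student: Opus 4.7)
The plan is to reduce the infinite-dimensional equation \eqref{eq:main} to the scalar SDE \eqref{eq:averaged_SDE} by exploiting the ergodic average $\bar v_\epsilon(t):=\langle v_\epsilon^x(t),\mu\rangle$, which we freely identify with the constant function on $\mathcal{O}$ taking that value. Applying $\langle\cdot,\mu\rangle$ to the mild formulation of \eqref{eq:main} and using invariance of $\mu$ under $e^{tA}$ (so that $\langle e^{tA/\epsilon}h,\mu\rangle=\langle h,\mu\rangle$), together with a stochastic Fubini argument for the interior noise, produces the desired drift and interior diffusion terms. The boundary convolution $w_{A,B}^\epsilon$ needs rewriting: using $A\,e^{(t-s)A/\epsilon}=-\epsilon\,\partial_s e^{(t-s)A/\epsilon}$ inside the integral defining $w_{A,B}^\epsilon$, so that $(\delta_0-A)e^{(t-s)A/\epsilon}=\delta_0\,e^{(t-s)A/\epsilon}+\epsilon\,\partial_s e^{(t-s)A/\epsilon}$, and integrating by parts in $s$ via It\^o, one extracts $\int_0^t \bar\Sigma(s)\,dw^B(s)$ plus a residual $r_\epsilon(t)$ carrying an extra factor $\epsilon$. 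With \eqref{eq:Bconv_bound} one checks $\mathbb{E}\sup_{t\in[0,T]}|r_\epsilon(t)|^p\to 0$, so that
\[
\bar v_\epsilon(t)=\langle x,\mu\rangle+\int_0^t\bar F(s,v_\epsilon^x(s))\,ds+\int_0^t\bar G(s,v_\epsilon^x(s))\,dw^Q(s)+\int_0^t\bar\Sigma(s)\,dw^B(s)+r_\epsilon(t).
\]

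Next I would establish that the fluctuation $v_\epsilon^x-\bar v_\epsilon$ is negligible. Subtracting the above from the mild formula for $v_\epsilon^x(t)$ produces four pieces, each estimated using Hypothesis~\ref{hyp:longterm}, i.e.\ $|(e^{tA/\epsilon}-\langle\cdot,\mu\rangle)h|_{H_\mu}\leq c\,e^{-\gamma t/\epsilon}|h|_{H_\mu}$: the initial piece is bounded by $c\,e^{-\gamma t/\epsilon}|x|_{H_\mu}$; the drift convolution is bounded pathwise by $c\int_0^t e^{-\gamma(t-s)/\epsilon}|F(s,v_\epsilon^x(s))|_{H_\mu}\,ds=O(\epsilon)$ using \eqref{eq:apriori_alpha0}; the interior stochastic convolution, via Burkholder or the factorization method together with It\^o isometry and the spectral gap in Hilbert--Schmidt norm, yields $O(\epsilon^{1/2})$; and the boundary piece is handled by the rewriting already used. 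Taking the supremum over $[\delta,T]$ makes the leading exponential factor $e^{-\gamma\delta/\epsilon}$ negligible, so
\[
\lim_{\epsilon\to 0}\mathbb{E}\sup_{t\in[\delta,T]}|v_\epsilon^x(t)-\bar v_\epsilon(t)|_{H_\mu}^p=0,
\]
and moreover the same bounds integrated in time give $\int_0^T\mathbb{E}|v_\epsilon^x(s)-\bar v_\epsilon(s)|_{H_\mu}^p\,ds=O(\epsilon)$.

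To conclude, subtract \eqref{eq:averaged_SDE} from the equation for $\bar v_\epsilon$ of Step~1 (both start from $\langle x,\mu\rangle$ at $t=0$). The Lipschitz bounds on $\bar F$ and on $h\mapsto\bar G(s,h)h'$ on $H_\mu$ (the latter from \eqref{eq:Gbar_lip}), the triangle inequality $|v_\epsilon^x(s)-v^x(s)|_{H_\mu}\leq|v_\epsilon^x(s)-\bar v_\epsilon(s)|_{H_\mu}+|\bar v_\epsilon(s)-v^x(s)|$, Burkholder--Davis--Gundy, and a Gronwall inequality on $[0,T]$ yield
\[
\mathbb{E}\sup_{t\in[0,T]}|\bar v_\epsilon(t)-v^x(t)|^p\leq c\int_0^T\mathbb{E}|v_\epsilon^x(s)-\bar v_\epsilon(s)|_{H_\mu}^p\,ds+c\,\mathbb{E}\sup_{t\in[0,T]}|r_\epsilon(t)|^p,
\]
whose right-hand side tends to zero by Steps~1--2. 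Combining this with Step~2 via one more triangle inequality produces \eqref{eq:SDE_limit}.

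The main obstacle is the boundary stochastic convolution $w_{A,B}^\epsilon$: because $(\delta_0-A)$ is unbounded and sits outside the stochastic integral, neither the spectral gap nor a direct It\^o isometry can be applied. The integration by parts in $s$ that trades this unboundedness for a factor of $\epsilon$ is the technical heart of the argument; in doing so one must verify that the resulting boundary contributions at $s=0,t$ are themselves small in $L^p(\Omega;C([0,T]))$, using the smoothing regularity \eqref{eq:Neuman_bounded} of the Neumann map and estimates of the type \eqref{eq:Bconv_bound}.
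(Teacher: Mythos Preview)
First, note that the paper does not itself prove this statement: it is quoted in Section~2.4 as a preliminary result from \cite{cerrai2011}, with no proof supplied. So there is no proof in the present paper to compare against directly. However, the paper does prove the closely related Proposition~\ref{prop:conv} (convergence of the controlled equation), and its method is essentially the one used in \cite{cerrai2011} for the theorem you are proving: one writes $v_\epsilon^x(t)-v^x(t)$ \emph{directly} as a sum of the semigroup term $e^{tA/\epsilon}x-\langle x,\mu\rangle$, the stochastic convolutions, and drift/diffusion mismatches, and then each piece is split via the spectral gap \eqref{eq:longterm} into a vanishing part plus a term controlled by $|v_\epsilon^x-v^x|_{H_\mu}$ (see Lemmas~\ref{lem:Fterm}--\ref{lem:Sterm} for the template). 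A single Gronwall on $[\delta,T]$ then closes the argument.

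Your route is organised differently: you insert the scalar projection $\bar v_\epsilon=\langle v_\epsilon^x,\mu\rangle$ and split the error as $(v_\epsilon^x-\bar v_\epsilon)+(\bar v_\epsilon-v^x)$. Conceptually this is clean, since all the infinite-dimensional averaging is isolated in the first piece while the second is a pure one-dimensional SDE comparison. The cost is that Step~2 still requires exactly the same kind of stochastic-convolution estimates (factorization plus spectral gap in $H_\mu$) that the direct approach needs, so nothing is really saved on the analytic side. One simplification you overlooked: because $\mu$ is invariant, $\langle Ah,\mu\rangle=0$ for $h\in D(A)$, and since $e^{(t-s)A/\epsilon}$ maps into $D(A)$ for $s<t$, one gets $\langle w_{A,B}^\epsilon(t),\mu\rangle=\delta_0\int_0^t\langle N_{\delta_0}\Sigma(s)\,\cdot\,,\mu\rangle\,dw^B(s)=\int_0^t\bar\Sigma(s)\,dw^B(s)$ \emph{exactly}, so your residual $r_\epsilon$ is identically zero and the integration-by-parts manoeuvre in Step~1 is unnecessary. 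With that simplification your outline is correct; the genuinely delicate point is, as you say, obtaining the $\sup_{t}$ estimate in $H_\mu$ for $(I-\langle\cdot,\mu\rangle)w_{A,B}^\epsilon$ and for the analogous interior piece, and here one uses the operator $S_\rho$ of \eqref{sro} and the kernel bound behind \eqref{eq:Glemma_3} exactly as in Lemmas~\ref{lem:Gterm} and~\ref{lem:Sterm}, rather than an It\^o integration by parts.
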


\subsection{Uniform large deviation principle and Laplace principle}
\label{sec:laplace}
In what follows, let $\mathcal{E}$ and $\mathcal{E}_0$ be Polish spaces.  We recall that a function $I: \mathcal{E} \to [0,+\infty]$ is called a {\em good rate function} if the level set $\{y \in \mathcal{E}:I(y) \leq M \}$ is  compact  in $\mathcal{E}$, for any $M > 0$.

\begin{defi} Let $\ds \{I^x\}_{x \in \mathcal{E}_0}$ be a family of good rate functions on $\mathcal{E}$ and let $\{Y_\epsilon^x\,;\ \epsilon > 0,\ x \in \mathcal{E}_0\}$ be a family of $\mathcal{E}$-valued random variables
Moreover, let $\gamma:(0,+\infty)\to (0,1)$, with $\gamma(\e)\to 0$, as $\e\to 0$. The family  of $\mathcal{E}$-valued random variables $\{Y_\epsilon^x\}_{\epsilon > 0}$ satisfies the large deviation principle (LDP) on $\mathcal{E}$ with speed $\gamma(\epsilon)$ and action functional $I^x$, uniformly for $x$ in the set $B\subseteq \mathcal{E}_0$, if the following two conditions hold.
\begin{enumerate}\item[(i)] For any open set $E \subset \mathcal{E}$,
\begin{equation}\label{eq:LDP_lower}
\liminf_{\epsilon \to 0} \gamma(\epsilon) \log \inf_{x \in B} \mathbb{P}(Y_\epsilon^x \in E) \geq - \sup_{x \in B} I^x(E):= -\sup_{x \in B} \inf_{y \in E}I^x(y).
\end{equation}
\item[(ii)] For any closed set $F \subset \mathcal{E}$,
\begin{equation}\label{eq:LDP_upper}
\limsup_{\epsilon \to 0} \gamma(\epsilon) \log \sup_{x \in B} \mathbb{P}(Y_\epsilon^x \in F) \leq - \inf_{x \in B} I^x(F):= -\inf_{x \in B} \inf_{y \in F}I^x(y).
\end{equation}
\end{enumerate}
\end{defi}

\begin{defi}
Let $\ds \{I^x\}_{x \in \mathcal{E}_0}$ be a family of good rate functions on $\mathcal{E}$ and let $B\subseteq \mathcal{E}_0$. The family of $\mathcal{E}$-valued random variables $\{Y_\epsilon^x\}_{\epsilon > 0}$   satisfies the  Laplace principle on $\mathcal{E}$ with speed $\gamma(\epsilon)$ and action functional $I^x$, uniformly for $x$ in the set $B$, if for any continuous and bounded $h:\mathcal{E}\to \R$
\begin{equation}
\label{eq*}
\lim_{\epsilon \to 0}\, \sup_{x \in B} \le|\gamma(\epsilon) \log \E \exp\le(-\frac{h(Y_\epsilon^x)}{\gamma(\epsilon)} \r) + \inf_{y \in \mathcal{E}} \le(I^x(y)+ h(y) \r) \r| = 0.
\end{equation}
\end{defi}
The equivalence between the non-uniform versions  of the large deviation principle and the Laplace principle is a well known fact. Recently, in  \cite{salins:171207231S} general results on the equivalence between the uniform versions of the large deviation principle and the Laplace principle have been investigated. 

\medskip

In order to study the problem of the exit of the solution of equation \eqref{eq:main} from a domain in $H_\mu$, we need a large deviation principle that is uniform with respect to $x$ on any bounded set of $\mathcal{E}_0 = H$.  In fact, since $H$ is a Hilbert space, and in particular reflexive, the weak convergence approach for SPDEs, as described in \cite{budhiraja2008},  allows us to prove a Laplace principle that is uniform on bounded sets.

The following proposition of \cite{salins:171207231S} then shows that the uniform Laplace principle implies the uniform large deviation principle. 
\begin{proposition}\label{prop:equiv}
Suppose that $\mathcal{E}_0$ is a reflexive Banach space and let $B \subset \mathcal{E}_0$ be a closed, bounded set. Moreover, assume the following conditions  hold.
\begin{enumerate}[(i)]
\item For any $s \geq 0$, the set $\Lambda_{s,B}:= \bigcup_{x \in B} \Phi^x(s)$ is compact in $\mathcal{E}$, where
\begin{equation*}
\Phi^x(s):= \{y \in \mathcal{E}:I^x(y) \leq s\}.
\end{equation*}
\item The mapping $x \mapsto \Phi^x(s)$ is weakly continuous in the Hausdorff metric,  for any $s \geq 0$. Namely, 
\begin{equation}\label{eq:hausdorff_cond}
x_n \rightharpoonup x,\  \text{ as } n \to \infty \implies \lim_{n \to \infty} \lambda(\Phi^{x_n}(s),\Phi^x(s)) = 0,
\end{equation}
where, for $A_1,A_2 \in \mathcal{E}$,
\begin{equation*}
\lambda(A_1,A_2) := \max \le\{\, \sup_{y \in A_1} \mathrm{dist}_{\mathcal{E}}(y,A_2), \sup_{y \in A_2} \mathrm{dist}_{\mathcal{E}}(y,A_1)\, \r\}.
\end{equation*}
\end{enumerate}
 Then, any family of $\mathcal{E}$-valued random variables $\{Y_\epsilon^x\}_{\epsilon > 0}$  that satisfies the Laplace principle on $\mathcal{E}$ with speed $\gamma(\epsilon)$ and action functional $I^x$, uniformly for $x \in B$, also satisfies the large deviation principle with the same speed and action functional, uniformly for $x \in B$. 
\end{proposition}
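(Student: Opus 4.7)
The plan is to derive the large deviation upper and lower bounds separately from the uniform Laplace principle \eqref{eq*}, by approximating the indicators of the relevant sets with suitable bounded continuous test functions, invoking \eqref{eq*} uniformly in $x\in B$, and then passing to limits using the structural conditions (i) and (ii).

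For the upper bound, let $F\subset\mathcal{E}$ be closed and set $h_R(y):=R\bigl(1\wedge \mathrm{dist}_{\mathcal{E}}(y,F)\bigr)$. This is bounded, continuous and vanishes on $F$, so $\mathbb{P}(Y^x_\epsilon\in F)\leq \mathbb{E}\exp(-h_R(Y^x_\epsilon)/\gamma(\epsilon))$. Applying \eqref{eq*} and passing $\sup_{x\in B}$ inside gives
\[
\limsup_{\epsilon\to 0}\gamma(\epsilon)\log\sup_{x\in B}\mathbb{P}(Y^x_\epsilon\in F)\leq -\inf_{x\in B}\inf_{y\in\mathcal{E}}\bigl(I^x(y)+h_R(y)\bigr).
\]
It remains to show that $L_R:=\inf_{x\in B}\inf_y(I^x+h_R)$ converges to $\inf_{x\in B}I^x(F)$ as $R\to\infty$. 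The inequality $L_R\leq \inf_{x\in B}I^x(F)$ is immediate since $h_R|_F\equiv 0$. For the reverse, pick near-minimizers $(x_R,y_R)\in B\times\mathcal{E}$ with $I^{x_R}(y_R)+h_R(y_R)\leq L_R+\eta$; both summands are bounded by some fixed $M$, so $y_R\in\Lambda_{M,B}$ and $\mathrm{dist}_{\mathcal{E}}(y_R,F)\leq M/R\to 0$. Condition (i) gives a subsequence $y_R\to y^*\in\Lambda_{M,B}\cap F$, and unpacking the definition of $\Lambda_{M,B}$ produces $\tilde{x}\in B$ with $I^{\tilde x}(y^*)\leq M$, whence $\inf_{x\in B}I^x(F)\leq M\leq \lim_R L_R+\eta$.

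For the lower bound, let $E\subset\mathcal{E}$ be open and set $s^*:=\sup_{x\in B}I^x(E)$ (assumed finite). Fix $\eta>0$. For each $x_0\in B$, choose $y_{x_0}\in E$ with $I^{x_0}(y_{x_0})\leq s^*+\eta$ and $r>0$ with $B_r(y_{x_0})\subset E$, and define $h_{M,x_0}(y):=M\bigl(1\wedge 2r^{-1}\mathrm{dist}(y,y_{x_0})\bigr)$, which vanishes at $y_{x_0}$ and equals $M$ outside $B_{r/2}(y_{x_0})\subset E$. The decomposition
\[
\mathbb{E}\exp\bigl(-h_{M,x_0}(Y^x_\epsilon)/\gamma(\epsilon)\bigr)\leq \mathbb{P}(Y^x_\epsilon\in E)+\exp(-M/\gamma(\epsilon)),
\]
combined with \eqref{eq*}, gives $\gamma(\epsilon)\log\mathbb{P}(Y^x_\epsilon\in E)\geq -\inf_y(I^x(y)+h_{M,x_0}(y))-o(1)$ once $M>s^*$. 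Condition (ii) then guarantees that, on some weak neighborhood $V_{x_0}$ of $x_0$, $\Phi^x(s^*+\eta)$ lies within Hausdorff distance $r/4$ of $\Phi^{x_0}(s^*+\eta)$ and in particular contains a point $\tilde y_x$ with $\mathrm{dist}(\tilde y_x,y_{x_0})\leq r/4$ and $I^x(\tilde y_x)\leq s^*+\eta$; for this $\tilde y_x$ one has $h_{M,x_0}(\tilde y_x)\leq \eta$, yielding $\inf_y(I^x(y)+h_{M,x_0}(y))\leq s^*+2\eta$ and hence $\gamma(\epsilon)\log\mathbb{P}(Y^x_\epsilon\in E)\geq -s^*-3\eta$ for all $x\in V_{x_0}\cap B$. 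To upgrade this into a bound uniform over $B$, I would argue by contradiction: a sequence $x_n\in B$, $\epsilon_n\to 0$ violating the uniform lower bound would, by reflexivity of $\mathcal{E}_0$ and norm-boundedness of $B$, admit a weakly convergent subsequence $x_n\rightharpoonup x^*$, to which the neighborhood argument at $x^*$ applies and produces the contradiction.

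The main obstacle is the lower bound, specifically the step from local (pointwise in $x_0$) neighborhood estimates to a genuinely uniform bound on $B$. Because $B$ is only assumed norm-closed and bounded, it need not be weakly compact or convex, so no finite weak-open covering of $B$ is available; the argument must instead proceed sequentially via the weak sequential compactness of bounded sets in reflexive spaces (Eberlein--\v Smulian), together with condition (ii) invoked at the weak limit of a putative violating sequence, even when that limit lies outside $B$. The upper bound, by contrast, is essentially routine once condition (i) is in hand, since it provides the strong-$\mathcal{E}$ compactness needed to extract convergent subsequences from minimizers of $I^x(y)+h_R(y)$.
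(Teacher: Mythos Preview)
The paper does not give its own proof of this proposition; it is quoted from \cite{salins:171207231S} (see Section~\ref{sec:laplace}, where the statement is attributed and stated without proof). So there is no paper proof to compare against, and I evaluate your argument on its own.

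Your overall strategy is the standard one and is correct. For the upper bound, the test functions $h_R(y)=R(1\wedge\mathrm{dist}_{\mathcal E}(y,F))$ are the right choice, and the compactness argument via condition~(i) is clean: with $M:=\lim_R L_R+\eta$ (this is what your ``fixed $M$'' must be, since $L_R$ is nondecreasing and bounded by $\inf_{x\in B}I^x(F)$), the near-minimizers $y_R$ all lie in the compact set $\Lambda_{M,B}$, any subsequential limit $y^*$ lies in $F\cap\Lambda_{M,B}$, and the very definition of $\Lambda_{M,B}$ then supplies some $\tilde x\in B$ with $I^{\tilde x}(y^*)\le M$. Letting $\eta\downarrow 0$ gives the bound. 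This is a nice use of (i) alone; condition~(ii) is not needed here. For the lower bound, your sequential contradiction argument via Eberlein--\v Smulian is the right framework, and condition~(ii) is precisely what transfers a good point $y_{x^*}\in\Phi^{x^*}(s^*+\eta)\cap E$ to nearby points $\tilde y_{x_n}\in\Phi^{x_n}(s^*+\eta)$ along the weakly convergent subsequence.

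There is, however, one genuine loose end that you flag but do not close. When the weak limit $x^*$ of the violating sequence $\{x_n\}\subset B$ falls outside $B$ (possible, since a norm-closed bounded set in a reflexive space need not be weakly closed unless convex), you still need to produce a point $y_{x^*}\in E$ with $I^{x^*}(y_{x^*})\le s^*+\eta$; this requires $I^{x^*}(E)\le s^*$, which does not follow from the stated hypotheses. In the paper's applications $B$ is a closed ball and hence weakly closed, so $x^*\in B$ automatically and the issue disappears; but as the proposition is stated your argument (and indeed the proposition itself) carries this small gap. Relatedly, your neighborhood phrasing of the lower bound is informal: condition~(ii) is a sequential statement, and without separability the weak topology on $B$ need not be metrizable, so one cannot pass from sequential continuity to neighborhood statements. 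You are right to retreat to the purely sequential contradiction argument; just carry it out directly rather than via neighborhoods.
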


\section{Main results and description of the methods}\label{sec:results}
We are here   interested in the validity of a large deviation principle for the family $\{\mathcal{L}(u_\epsilon^x) \}_{\epsilon \in (0,1] }$, as $\epsilon \to 0$, where $u_\epsilon^x$ is the solution to the equation $\eqref{eq:main}$ with initial condition $x \in H$. 

In \cite{cerrai2011}, equation $\eqref{eq:main}$ was studied with $\alpha(\e)=\beta(\e)\equiv 1$, and it was shown that for every $\delta > 0$, the solutions converge in $L^p(\Omega;C([\delta,T];H_\mu))$ to the solution of the one-dimensional stochastic differential equation $\eqref{eq:averaged_SDE}$. Therefore, if 
\[\lim_{\e\to 0}\a(\e)=\lim_{\e\to 0}\beta(\e)=0,\]
 thanks to the bounds $\eqref{eq:Bconv_bound}$, $\eqref{eq:Qconv_bound}$  and $\eqref{eq:apriori_alpha0}$, the solution $u_\epsilon^x$ will converge in the space $L^p(\Omega;C([\delta,T];H_\mu))$ to the solution of the deterministic one-dimensional differential equation,
\begin{equation}\label{eq:ODE}
\ds \frac{du}{dt} = \bar{F}(t,u(t)), \ \ \ \ \ u(0) = \langle x, \mu \rangle.
\end{equation}

In what follows, we shall assume that the following conditions are satisfied
\begin{hyp}\label{hyp:nondeg}
\begin{enumerate}
\item[(i)] We have
\begin{equation}
\label{lim}
\lim_{\e\to 0}\a(\e)=\lim_{\e\to 0}\beta(\e)=0,\ \ \ \ \ \lim_{\e\to 0}\frac{\beta(\e)}{\a(\e)}=\bar{\rho} \in\,[0,+\infty].
\end{equation}
\item[(ii)] For every $t\geq 0$, $w \in\,\mathbb{R}$ and $\rho \in\,[0,+\infty]$, we define
\begin{equation}
\label{f5}
\mathcal{H}_{\rho}(t,w)=\frac 1{(1+\rho)^2}\le[\le|\sqrt{Q}\le[G(t,w)m \r]\r|_H^2 + \rho^2\,\le|\delta_0 \sqrt{B}\le[\Sigma(t) N_{\delta_0}^* m\r] \r|_Z^2\r],
\end{equation}
where $m$ is the density of the invariant measure $\mu$.
Then, if $\bar{\rho}$ is the constant introduced in \eqref{lim}, we have
\begin{equation}
\label{nondeg}
\inf_{(t,w) \in [0,\infty)\times \R} \mathcal{H}_{\bar{\rho}}(t,w)>0.
\end{equation}
\end{enumerate}
\end{hyp}

\medskip

Now, for every $0\leq \d<T$, we denote by $\Psi_{\d, T}$ the subset of  $C([\delta,T];H_\mu)$ containing all functions $u \in C([\delta,T];H_\mu)$ that are absolutely continuous in $t$ and are constant in the spatial variable $\xi$. Then, if $u \in \Psi_{\delta,T}$, we define 
\begin{equation}\label{eq:action}
\ds{I_{\delta,T}^x(u) = \inf_{\substack{w \in C([0,T];\R) \\ w(0) = \langle x, \mu \rangle,\ w|_{[\delta,T]} = u}} \frac{1}{2} \int_0^T  \frac{ \le|{w}^\prime(t) - \bar{F}(t,w(t)) \r|^2}{\mathcal{H}_{\bar{\rho}}(t,w(t))}\,dt.}
\end{equation}
For any other $u \in\, C([\delta,T];H_\mu)$, we set $I_{\delta,T}^x(u) = +\infty$.

We will show that, in fact, the laws of the family $\{u_\epsilon^x(t)\}_{\epsilon \in (0,1]}$ satisfy a large deviation principle in the space $C([\delta,T];H_\mu)$, with respect to the action functional $I_{\delta,T}^x$. 

\begin{theorem}\label{theor:main} Assume that all Hypotheses \ref{hyp:longterm} to \ref{hyp:nondeg} are satisfied.
Fix any $T >0$ and $0 < \delta < T$ and let $u_\epsilon^x$ denote the solution to equation $\eqref{eq:main}$, with initial condition $x \in H$. Moreover, let us define
\begin{equation}
\label{gamma}
\gamma(\e)=:\le(\a(\e)+\beta(\e)\r)^2,\ \ \ \ \e>0.\end{equation}
Then, the following facts hold.
\begin{enumerate}
\item[(i)] The family $\{\mathcal{L}(u_\epsilon^x)\}_{\epsilon \in (0,1]}$ satisfies a large deviation principle in $C([\delta,T];H_\mu)$ with speed $\gamma(\e)$ and action functional $I_{\delta,T}^x$, uniformly for $x$ in any closed, bounded subset of $H$.
 \item[(ii)] If in addition $x$ is constant, then $\{\mathcal{L}(u_\epsilon^x)\}_{\epsilon \in (0,1]}$ satisfy a large deviation principle in $C([0,T];H_\mu)$ with speed $\gamma(\e)$ and action functional $I_{0,T}^x$, uniformly for $x$ in any closed, bounded subset of $H$.
 \end{enumerate} 
\end{theorem}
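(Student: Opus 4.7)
The plan is to establish the Laplace principle uniformly over bounded sets of $H$ via the Budhiraja--Dupuis weak convergence approach, and then upgrade it to a uniform LDP using Proposition \ref{prop:equiv}. Part (ii) will follow by essentially the same arguments once we observe that for constant initial data $x$, no averaging ``boundary layer'' near $t=0$ is needed.

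\medskip

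\textbf{Step 1 (Variational representation and skeleton equation).} Using the representation formula of Budhiraja--Dupuis, we write
\[
-\gamma(\e)\log\E\exp\!\le(-\frac{h(u_\e^x)}{\gamma(\e)}\r) = \inf_{(\phi,\psi)} \E\!\le[\frac{1}{2}\int_0^T\!\le(|\phi(t)|_H^2 + |\psi(t)|_Z^2\r)dt + h(u_\e^{x,\phi,\psi})\r],
\]
where $u_\e^{x,\phi,\psi}$ solves the controlled analogue of \eqref{eq:main} with additional drift terms $\a(\e)G(t,u_\e)\sqrt{Q}\phi$ in the interior and $\e\beta(\e)\Sigma(t)\sqrt{B}\psi$ on the boundary, and the infimum runs over predictable controls in $L^2([0,T];H)\times L^2([0,T];Z)$. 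The appropriate skeleton, associated with a deterministic control $(\phi,\psi)$ of energy $\le N$, should be the one-dimensional ODE
\[
w^\prime(t) = \bar F(t,w(t)) + \frac{1}{1+\bar\rho}\bar G(t,w(t))\sqrt{Q}\phi(t) + \frac{\bar\rho}{1+\bar\rho}\bar\Sigma(t)\sqrt{B}\psi(t),\ \  w(0)=\langle x,\mu\rangle,
\]
and a direct quadratic optimization over $(\phi,\psi)$ reproducing a given derivative $w^\prime$ yields precisely the functional $I_{\delta,T}^x$ with the density $\mathcal{H}_{\bar\rho}$ defined in \eqref{f5}. Hypothesis \ref{hyp:nondeg}(ii) ensures this optimization is non-degenerate.

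\medskip

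\textbf{Step 2 (Laplace principle via averaging under control).} The two key inputs are: (a) for controls $(\phi_\e,\psi_\e)$ with $\sup_\e\E\int_0^T(|\phi_\e|_H^2+|\psi_\e|_Z^2)dt\le N$ converging in distribution (in the weak topology of $L^2$) to $(\phi,\psi)$, the controlled solutions $u_\e^{x,\phi_\e,\psi_\e}$ converge in distribution in $C([\delta,T];H_\mu)$ to the skeleton $w$ associated with $(\phi,\psi)$; and (b) a matching compactness/lower-bound statement. The proof of (a) adapts the averaging argument of \cite{cerrai2011} with the additional controlled drift: the fast semigroup $e^{tA/\e}$ projects onto the invariant density $m$, producing the averaged coefficients $\bar F$, $\bar G$, $\bar\Sigma$, while the weak $L^2$-convergence of the controls passes through the limit because $\e^{-1}\int_0^te^{(t-s)A/\e}\,ds$ weakly averages to $\langle\cdot,\mu\rangle$. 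The boundary layer as $t\downarrow 0$ is absorbed into the restriction to $[\delta,T]$. Uniformity for $x$ in a closed bounded set $B\subset H$ is obtained by working in the weak topology of $H$ (where $B$ is compact): since the initial condition enters the limit only through $\langle x,\mu\rangle = \langle x,m\rangle_H$, which is weakly continuous on $H$, the convergence of controlled processes depends on $x$ in a weakly continuous way, and the a priori bound \eqref{eq:apriori_alpha0} controls the initial layer uniformly.

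\medskip

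\textbf{Step 3 (From uniform Laplace to uniform LDP via Proposition \ref{prop:equiv}).} With $\mathcal{E}_0 = H$ (reflexive) and $\mathcal{E}=C([\delta,T];H_\mu)$, we verify: (i) for any bounded $B\subset H$ and any level $s\ge 0$, the union $\bigcup_{x\in B}\Phi^x(s)$ is relatively compact in $\mathcal{E}$; and (ii) weak continuity of $x\mapsto\Phi^x(s)$ in Hausdorff metric. Condition (i) reduces, via the explicit form of $I_{\delta,T}^x$, to showing that the family of absolutely continuous real-valued functions $w$ on $[0,T]$ with $w(0)=\langle x,\mu\rangle$, $|\langle x,\mu\rangle|\le C\|x\|_H\le CR$ for $x\in B$, and $\int_0^T|w^\prime|^2\,dt\le K(s)$, is relatively compact in $C([\delta,T];\R)$, which is Arzelà--Ascoli. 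Condition (ii) reduces to the weak continuity of $x\mapsto\langle x,\mu\rangle=\langle x,m\rangle_H$ on $H$, combined with continuous dependence of the minimization problem \eqref{eq:action} on the initial value.

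\medskip

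\textbf{Main obstacle.} The hardest step is (a) of Step 2: showing that controlled fast-transport solutions converge uniformly in $x\in B$ (weakly) to the one-dimensional skeleton. The difficulty is that the controls $(\phi_\e,\psi_\e)$ live in infinite-dimensional spaces yet must couple with the fast-averaging mechanism to produce the scalar noise density $\mathcal{H}_{\bar\rho}$, while the pre-averaging boundary layer on $[0,\delta]$ must be handled carefully; the a priori bounds from Section 2.4, together with the spectral gap in Hypothesis \ref{hyp:longterm} applied to the controlled mild formulation, are the main quantitative tools. Part (ii) of the theorem, for constant $x$, follows from the same argument extended down to $t=0$, since then $e^{tA/\e}x=x$ and no averaging layer arises.
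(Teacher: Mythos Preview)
Your approach is essentially the same as the paper's: establish the uniform Laplace principle via the Budhiraja--Dupuis weak convergence method (your Steps 1--2 correspond to the well-posedness Propositions~4.1--4.2, the convergence Proposition~\ref{prop:conv}, and the identification $\hat I^x_{\delta,T}=I^x_{\delta,T}$ in Section~\ref{sec:action}), and then invoke Proposition~\ref{prop:equiv}. The one place where the paper does noticeably more work than your sketch suggests is Step~3(ii): the Hausdorff continuity of $x\mapsto\Phi^x(s)$ does not follow immediately from the weak continuity of $\langle x,\mu\rangle$ and a generic ``continuous dependence'' argument, and the paper devotes a chain of technical lemmas to it (joint continuity of the partial cost $J_\delta(x,u)$ in the weak topology of $H$, a level-set stability estimate $\lim_{r\to 0}\sup_{x\in B}\lambda(\Phi^x_{\delta,T}(s),\Phi^x_{\delta,T}(s+r))=0$, and then the two-sided Hausdorff bound).
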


To prove Theorem \ref{theor:main}, we follow the weak convergence approach, and instead prove the validity of a uniform Laplace principle, as described in Theorem \ref{theor:laplace}. This, combined with Proposition \ref{prop:equiv} yields a uniform large deviation principle, with same rate and same action functional. 

\medskip

First, we introduce some notation. We denote by $V$  the product Hilbert space $H \times Z$, endowed with the inner product 
\[\langle v_1,v_2 \rangle_V :=  \langle h_1,h_2 \rangle_H + \langle z_1,z_2 \rangle_Z,\]
for every $v_1=(h_1,z_1), \  v_2 = (h_2,z_2) \in V$.
 Next, we define the linear operator,
\begin{equation*}
Sv = (Qh,Bz),\ \ \ \ v=(h,z) \in\,V.
\end{equation*} 
 Notice that $S \in \mathcal{L}^+(V)$ and the process $w^S(t) := (w^Q(t),w^B(t))$, $t\geq 0$ is an $S$-Wiener process. Next, we let $\mathcal{P}(V)$ be the set of predictable processes in $L^2(\Omega \times [0,T];V)$. For every fixed $M>0$, we define
\[
S^M(V) := \le\{u \in L^2(0,T;V): \int_0^T |u(s)|_V^2\, ds \leq M\r\},\]
and
\[\mathcal{P}^M(V) := \le\{\varphi \in \mathcal{P}(V):\varphi \in S^M(V), \ \mathbb{P}-a.s. \r\} \]

For any $\varphi(t) = (\varphi_H(t),\varphi_Z(t)) \in \mathcal{P}^M(V)$, we denote by $u_\epsilon^{x,\varphi}$ the unique mild solution of the controlled stochastic PDE
\begin{equation}\label{eq:control_PDE}
\begin{cases} 
\ds{\frac{\partial u}{\partial t}(t,\xi) = \e^{-1}\, \mathcal{A}u(t,\xi) + f(t,\xi,u(t,\xi))} \\ 
\ds {\qquad \qquad \quad  +\frac{\a(\e)}{\sqrt{\gamma(\e)}}\, g(t,\xi,u(t,\xi))\le[\sqrt{Q}\varphi_H(t,\xi)+ \sqrt{\gamma(\epsilon)}\, \frac{\partial w^Q}{\partial t}(t,\xi) \r], } & \ds{\xi \in \mathcal{O}, }\\[10pt] \ds{ \frac{\partial u}{\partial \nu}(t,\xi) =   \e\,{\beta(\e)}{\sqrt{\gamma(\e)}}\,\sigma(t,\xi) \le[\sqrt{B}\varphi_Z(t,\xi)+ \sqrt{\gamma(\e)}\,\frac{\partial w^B}{\partial t}(t,\xi)  \r],}  & \ds{\xi \in \partial \mathcal{O},} \\[10pt] \ds{u(0,\xi) = x(\xi),} & \ds{\xi \in \mathcal{O}, }\end{cases}
\end{equation}
where $\gamma(\e)$ is the function defined in \eqref{gamma}.  Moreover, we denote by $u^{x,\varphi}$ the unique solution of the  random ODE
\begin{equation}\label{eq:control_ODE}
\ds \frac{du}{dt} = \bar{F}(t,u(t)) + \frac{1}{1+\bar{\rho}}\,\bar{G}(t,u(t)) \le[\sqrt{Q}\varphi_H(t)\r] + \frac{\bar{\rho}}{1+\bar{\rho}}\,\bar{\Sigma}(t)\le[\sqrt{B}\varphi_Z(t)\r],\ \ \ \ 
u(0) = \langle x, \mu \rangle.
\end{equation}
We will prove the well-posedness of both of these equations in the next section. In what follows, we denote 
\[\ds \mathcal{G}_\delta(x,\varphi):= u^{x,\varphi}|_{[\delta,T]}.\]

\begin{theorem}\label{theor:laplace}
For any $x \in H$, $ 0 < \delta < T$ and $u \in C([\delta,T];H_\mu)$, let
\begin{equation}\label{eq:action_abstract}
\hat{I}_{\delta,T}^x(u) := \inf_{\substack{\varphi \in L^2(0,T;V) \\ \mathcal{G}_\delta(x,\varphi) =u  }} \frac{1}{2}  \int_0^T |\varphi(s)|^2_{V} ds.
\end{equation}
 Suppose that the following conditions hold.
\begin{enumerate}[(i)]
\item If $B \subset H$ is a closed and bounded set, then for every $M < \infty$ the set
\begin{equation*}
F_{M,B}:= \le\{ u \in C([\delta,T];\R):u = \mathcal{G}_\delta(x,\varphi),\ \varphi \in S^M(V),\ x \in B   \r\},
\end{equation*}
is compact in $C([\delta,T];H_\mu)$.
\item If $\{\varphi_\epsilon \}_{\epsilon > 0} \subset \mathcal{P}^M(V)$ is any family that converges in distribution, as $\epsilon \to 0$, to some $\varphi \in \mathcal{P}^M(V)$ with respect to the weak topology of $L^2(0,T;V)$, and if $\{x_\epsilon\}_{\epsilon > 0}\subset H$ is any family that converges weakly in $H$, as $\epsilon \to 0$, to some $x \in H$, then the family $\{u_\epsilon^{x_\epsilon,\varphi_\epsilon}\}_{\epsilon > 0}$ converges in distribution, as $\epsilon \to 0$, to $u^{x,\varphi}$ in the space $C([\delta,T];H_\mu)$, endowed with the strong topology.
\item For every $u \in C([\delta,T;H_\mu])$, the mapping $x \mapsto \hat{I}_{\delta,T}^x(u)$ is weakly lower semicontinuous from $H$ into $[0,+\infty]$. 
\end{enumerate}

Then the family $\{\mathcal{L}(u_\epsilon^x)\}_{\epsilon>0}$ satisfies a Laplace principle in $C([\delta,T];H_\mu)$, with speed $\gamma(\e)$ and action functional $\hat{I}_{\delta,T}^x$, uniformly in $x$ on any closed, bounded subset of $H$. 
Moreover, for any closed bounded set $B \subset H$ and any $s\geq 0$, the set 
\begin{equation*}
\Lambda_{s,B} := \bigcup_{x \in B} \le\{u \in C([\delta,T];H_\mu):\hat{I}_{\delta,T}^x(u) \leq s\r\}
\end{equation*}
is compact in $C([\delta,T];H_\mu)$.
\end{theorem}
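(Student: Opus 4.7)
The plan is to apply the weak-convergence method of Budhiraja--Dupuis, whose starting point is the Bou\'e--Dupuis variational representation
\begin{equation*}
-\gamma(\epsilon)\log \mathbb{E}\exp\!\Bigl(-\frac{h(u_\epsilon^x)}{\gamma(\epsilon)}\Bigr) = \inf_{\varphi \in \mathcal{P}(V)} \mathbb{E}\Bigl[\tfrac{1}{2}\textstyle\int_0^T |\varphi(s)|_V^2\,ds + h(u_\epsilon^{x,\varphi})\Bigr],
\end{equation*}
valid for every bounded continuous $h$, where $u_\epsilon^{x,\varphi}$ is the solution of the controlled equation \eqref{eq:control_PDE}. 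The three conditions (i)--(iii) are precisely tailored so that both sides can be identified in the limit as $\epsilon\to 0$ with $\inf_y(\hat{I}^x_{\delta,T}(y)+h(y))$, uniformly over $x$ in a closed bounded set $B\subset H$.

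For the Laplace \emph{upper} bound, I would fix $\eta>0$ and, for each $x \in B$, choose $y^x\in C([\delta,T];H_\mu)$ within $\eta$ of the infimum $\inf_y(\hat{I}^x_{\delta,T}(y)+h(y))$, together with a deterministic control $\varphi^x\in L^2(0,T;V)$ satisfying $\mathcal{G}_\delta(x,\varphi^x)=y^x$ and $\tfrac12\int_0^T|\varphi^x|_V^2\,ds\le \hat{I}^x_{\delta,T}(y^x)+\eta$. Plugging $\varphi^x$ into the variational representation and letting $\epsilon\to 0$ along any sequence $x_\epsilon\in B$ (which, by reflexivity of $H$ and boundedness of $B$, admits a weakly convergent subsequence $x_\epsilon \rightharpoonup x^\ast\in B$), condition (ii) yields $u_\epsilon^{x_\epsilon,\varphi^{x_\epsilon}}\Rightarrow u^{x^\ast,\varphi^{x^\ast}}$ in $C([\delta,T];H_\mu)$; condition (iii) (weak lower semicontinuity of $x\mapsto\hat{I}^x_{\delta,T}$) lets one carry the infimum through the weak limit. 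For the Laplace \emph{lower} bound, one selects $\epsilon$-almost-optimal stochastic controls $\varphi_\epsilon$ in the variational representation, uses boundedness of $h$ to truncate so that $\mathbb{E}\int_0^T|\varphi_\epsilon|_V^2\,ds\le M$ for $M := 4\|h\|_\infty$, and then invokes the fact that $S^M(V)$ is compact (hence metrizable and sequentially compact) in the weak topology of $L^2(0,T;V)$ by Banach--Alaoglu. Passing to a subsequence, $\varphi_\epsilon\Rightarrow\varphi\in S^M(V)$ jointly with $x_\epsilon\rightharpoonup x^\ast\in B$, condition (ii) gives $u_\epsilon^{x_\epsilon,\varphi_\epsilon}\Rightarrow u^{x^\ast,\varphi}$, and a Fatou argument combined with the definition \eqref{eq:action_abstract} of $\hat{I}^{x^\ast}_{\delta,T}$ closes the lower bound.

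For the compactness of the level set $\Lambda_{s,B}$, observe that $\hat{I}^x_{\delta,T}(u)\le s$ holds if and only if $u=\mathcal{G}_\delta(x,\varphi)$ for some $\varphi\in L^2(0,T;V)$ with $\tfrac{1}{2}\int_0^T|\varphi|_V^2\,ds\le s$, that is, $\varphi \in S^{2s}(V)$. Consequently
\begin{equation*}
\Lambda_{s,B} \subseteq F_{2s,B} = \{u = \mathcal{G}_\delta(x,\varphi) : \varphi\in S^{2s}(V),\ x\in B\},
\end{equation*}
which is compact by hypothesis (i). A routine closedness check (using condition (ii) with deterministic controls and the weak lower semicontinuity of the $L^2$-norm on $S^{2s}(V)$) shows $\Lambda_{s,B}$ is itself closed, hence compact.

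The main obstacle is the \emph{uniformity over the non-compact set} $B$. Compact-set uniformity is standard in the Budhiraja--Dupuis framework, but here we need uniformity on weakly-sequentially-compact bounded sets; this is where reflexivity of $H$ is used twice (to extract weak subsequential limits of $x_\epsilon$ on both the upper- and lower-bound sides), and it is the sole reason conditions (ii) and (iii) are formulated in terms of weak convergence of initial data and weak lower semicontinuity of the rate function rather than strong continuity. Once the uniform Laplace principle is established, the goodness of the rate functions $\hat{I}^x_{\delta,T}$ follows from the compactness of $\Lambda_{s,\{x\}}$, and Proposition \ref{prop:equiv}, applied with the Hausdorff-continuity statement supplied by combining (ii) and (iii), upgrades the uniform Laplace principle to the uniform large deviation principle needed for Theorem \ref{theor:main}.
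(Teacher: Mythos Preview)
Your outline is essentially correct and matches the approach the paper invokes: Theorem~\ref{theor:laplace} is not proved from scratch in the paper but is treated as an adaptation of the Budhiraja--Dupuis--Maroulas sufficient conditions (see \cite{budhiraja2008}), with Remark~\ref{rem:action} explaining that uniformity over closed bounded sets, rather than compact sets, is obtained by endowing $H$ with its weak topology and requiring the weak-continuity/weak-lower-semicontinuity formulations of conditions (ii) and (iii). Your variational-representation argument, the use of Banach--Alaoglu on both the initial data and the controls, and the level-set computation $\Lambda_{s,B}\subset F_{2s,B}$ are exactly the ingredients that make this adaptation work.

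One imprecision worth flagging in your Laplace upper bound: after extracting $x_\epsilon\rightharpoonup x^\ast$, condition (ii) does \emph{not} give $u_\epsilon^{x_\epsilon,\varphi^{x_\epsilon}}\Rightarrow u^{x^\ast,\varphi^{x^\ast}}$, since the near-optimal controls $\varphi^{x_\epsilon}$ need not converge to $\varphi^{x^\ast}$ (the selection $x\mapsto\varphi^x$ has no continuity). What you actually get, after passing to a further subsequence so that $\varphi^{x_\epsilon}\rightharpoonup\psi$ in $S^M(V)$, is $u_\epsilon^{x_\epsilon,\varphi^{x_\epsilon}}\Rightarrow u^{x^\ast,\psi}$. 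The argument is still saved because the deterministic targets $y^{x_\epsilon}=\mathcal{G}_\delta(x_\epsilon,\varphi^{x_\epsilon})$ converge to the \emph{same} limit $u^{x^\ast,\psi}$ (this is condition (ii) applied with $\epsilon$ held away from zero, i.e.\ continuity of $\mathcal{G}_\delta$ in the weak topologies, which follows from (i)), so that $\mathbb{E}h(u_\epsilon^{x_\epsilon,\varphi^{x_\epsilon}})-h(y^{x_\epsilon})\to 0$. Condition (iii) is not needed for this half; it enters on the lower-bound side, to pass $\inf_y(\hat I^{x_\epsilon}_{\delta,T}(y)+h(y))$ through the weak limit $x_\epsilon\rightharpoonup x^\ast$. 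Also, in the level-set argument, the ``if and only if'' requires that the infimum in \eqref{eq:action_abstract} is attained; this does hold (minimizing sequences lie in a common $S^M(V)$, extract a weak limit, use continuity of $\mathcal{G}_\delta$ and weak lower semicontinuity of the norm), but it deserves a line of justification.
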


\begin{remark}\label{rem:action} \
In the  theorem above we allow the uniformity of the Laplace principle for initial conditions $x$  in closed and bounded sets $B \subset H$ (rather than compact sets). This is possible  by simply changing the topology of $H$ to the weak topology. Specifically, we require that the mapping $x \mapsto \hat{I}_{\delta,T}(u)$ is weakly lower semicontinuous and that condition (ii) must hold for any sequence $\{x_\epsilon\}_{\epsilon > 0}$ converging weakly to $x$. The reason why we can prove this stronger form of condition (ii) is because the limiting equation $\eqref{eq:ODE}$ is finite dimensional. If, for example, the averaging were only to occur in some but not all of the coordinates, then we would not have this property and condition (ii) would certainly fail for $x_\epsilon$ converging to $x$ only weakly. 

\begin{flushright}
$\Box$
\end{flushright}

\end{remark}

\section{Proof of Theorem \ref{theor:main}}\label{sec:proof}
\subsection{Well-Posedness of the skeleton equations}
\begin{proposition}\label{prop:wellposed} Assume that Hypotheses \ref{hyp:coeff} and \ref{hyp:eigenvalues} hold, and fix any $\epsilon,M > 0$ and  $p \geq 1$. Then for any $\varphi = (\varphi_H,\varphi_Z) \in \mathcal{P}^M(V)$ and $x \in H$, equation $\eqref{eq:control_PDE}$ has a unique adapted mild solution, $u_\epsilon^{x,\varphi} \in L^p(\Omega;C([0,T];H))$. Furthermore, if \eqref{lim} holds,  we have
\begin{equation}\label{eq:apriori}
\sup_{\epsilon \in (0,1]}\mathbb{E}\sup_{0 \leq t \leq T} |u_\epsilon^{x,\varphi}(t)|_H^p \leq c_{p,T, M}\le( 1 + |x|_H^p\r).
\end{equation}
\end{proposition}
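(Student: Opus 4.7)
I would establish existence and uniqueness by a Banach fixed-point argument on the mild formulation of \eqref{eq:control_PDE}, following the scheme used for Theorem 3.4 of \cite{cerrai2011} and treating the two control-driven terms as deterministic inhomogeneities additional to the stochastic equation handled there. Using the Neumann map $N_{\delta_0}$ to absorb the inhomogeneous boundary data, a mild solution $u = u_\epsilon^{x,\varphi}$ must satisfy
\begin{equation*}
u(t) = e^{tA/\e}x + \int_0^t e^{(t-s)A/\e}F(s,u(s))\,ds + \frac{\a(\e)}{\sqrt{\gamma(\e)}}\Psi_Q^\e(u,\varphi_H)(t) + \frac{\beta(\e)}{\sqrt{\gamma(\e)}}\Psi_B^\e(\varphi_Z)(t) + \a(\e)\,w_{A,Q}^\e(u)(t) + \beta(\e)\,w_{A,B}^\e(t),
\end{equation*}
where
\begin{equation*}
\Psi_Q^\e(u,\varphi_H)(t) := \int_0^t e^{(t-s)A/\e}G(s,u(s))\sqrt{Q}\varphi_H(s)\,ds,
\end{equation*}
and
\begin{equation*}
\Psi_B^\e(\varphi_Z)(t) := (\delta_0 - A)\int_0^t e^{(t-s)A/\e}N_{\delta_0}[\Sigma(s)\sqrt{B}\varphi_Z(s)]\,ds.
\end{equation*}
By Hypothesis \ref{hyp:nondeg}(i), the prefactors $\a(\e)/\sqrt{\gamma(\e)}$ and $\beta(\e)/\sqrt{\gamma(\e)}$ are bounded above by $1$ uniformly in $\e \in (0,1]$, and the stochastic convolutions are already controlled uniformly in $\e$ by Lemmas 3.1 and 3.3 of \cite{cerrai2011}. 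Consequently the only new ingredient is the $\e$-uniform control of $\Psi_Q^\e$ and $\Psi_B^\e$.

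For the interior term I would expand $\sqrt{Q}\varphi_H(s) = \sum_k \lambda_k \langle\varphi_H(s),e_k\rangle e_k$ and apply Minkowski's integral inequality followed by Cauchy--Schwarz in the index $k$ to get
\begin{equation*}
|\Psi_Q^\e(u,\varphi_H)(t)|_H \leq \int_0^t |\varphi_H(s)|_H \le(\sum_k \lambda_k^2\,|e^{(t-s)A/\e}[g(s,\cdot,u(s))e_k]|_H^2\r)^{1/2}\,ds.
\end{equation*}
The Hilbert--Schmidt-type sum inside admits exactly the same treatment as in the proof of Lemma 3.3 of \cite{cerrai2011}, using the eigenvalue summability \eqref{eq:rhohyp} together with the heat kernel bound \eqref{eq:sem_kernel}; the resulting estimate is of the form $c\,(1+|u(s)|_H)$ with $c$ independent of $\e \in (0,1]$. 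A final Cauchy--Schwarz in time combined with $\|\varphi_H\|_{L^2(0,T;H)}^2 \leq M$ then yields a linear-in-$u$ bound. For the boundary term I would factor $(\delta_0 - A) = (\delta_0 - A)^{1-\alpha}(\delta_0 - A)^\alpha$ with $\alpha \in (1/2, 3/4)$, so that $(\delta_0 - A)^\alpha N_{\delta_0} \in \mathcal{L}(Z,H)$ by \eqref{eq:Neuman_bounded} and \eqref{eq:sem_domain}. The analyticity estimate $\|(\delta_0 - A)^{1-\alpha}e^{tA/\e}\|_{\mathcal{L}(H)} \leq C\,\e^{1-\alpha}\,t^{-(1-\alpha)}$ combined with Cauchy--Schwarz in time then produces
\begin{equation*}
\sup_{0 \leq t \leq T}|\Psi_B^\e(\varphi_Z)(t)|_H \leq C_{M,T}\,\e^{1-\alpha},
\end{equation*}
which is uniform in $\e \in (0,1]$.

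With these estimates in hand, the standard contraction argument on $L^p(\Omega;C([0,T_0];H))$ for sufficiently small $T_0 > 0$ delivers a unique local mild solution: the Lipschitz constants of $F$, of $\Psi_Q^\e$ in its first argument, and of $w_{A,Q}^\e$ (via \eqref{eq:Qconv_lip}) can all be made small by shrinking $T_0$, while $\Psi_B^\e$ and $w_{A,B}^\e$ do not depend on $u$. Iterating over successive subintervals of equal length gives a global mild solution on $[0,T]$, and the a priori bound \eqref{eq:apriori} follows by inserting the term-by-term estimates above, raised to the $p$-th power and combined with a Burkholder--Davis--Gundy inequality for the stochastic convolutions, into a Gronwall argument for $t \mapsto \mathbb{E}\sup_{r \leq t}|u(r)|_H^p$.

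I expect the principal difficulty to lie in the $\e$-uniform control of $\Psi_B^\e$, since the unbounded operator $(\delta_0 - A)$ interacts nontrivially with the fast-transport rescaling $A/\e$. The key observation resolving this is that distributing the fractional power $(\delta_0 - A)^{1-\alpha}$ onto the rescaled semigroup produces the benign factor $\e^{1-\alpha}$, which tends to zero rather than blowing up as $\e \to 0$; the admissible window $\alpha \in (1/2, 3/4)$ simultaneously secures that $(\delta_0 - A)^\alpha N_{\delta_0}$ is bounded from $Z$ into $H$ and that the resulting time singularity is square-integrable.
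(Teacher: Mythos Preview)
Your approach is essentially the same as the paper's: the boundary control term is handled via the identical fractional-power splitting (the paper writes $S_\rho := (\delta_0-A)^{(3-\rho)/4}N_{\delta_0}$, which is exactly your factorization with $\alpha = (3-\rho)/4 \in (1/2,3/4)$), and the interior control term via the same eigenfunction expansion and heat-kernel bound that underlie the paper's estimate \eqref{eq:Glemma_3}. One imprecision worth flagging: the Hilbert--Schmidt-type sum for the interior term is \emph{not} bounded by $c(1+|u(s)|_H)$ uniformly in the time variable---it carries a factor $((t-s)/\e)^{-d/(2\zeta)}+1$ with $\zeta = 2\rho/(\rho-2)$, which is precisely what your subsequent Cauchy--Schwarz in time must absorb (and which, since $\e^{d/(2\zeta)}\leq 1$, is indeed uniform in $\e\in(0,1]$); similarly $|\Psi_B^\e|\leq C_{M,T}(\e^{1-\alpha}+1)$ rather than $C_{M,T}\,\e^{1-\alpha}$, though this is harmless for the uniform-in-$\e$ bound you need.
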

\begin{proof}
The well-posedness of equation \eqref{eq:control_PDE} follows from a fixed point argument in the space of adapted processes in $L^p(\Omega;C([0,T];H))$. For $u \in L^p(\Omega;C([0,T];H))$, we define
\[\begin{array}{l}
\ds{
\mathcal{K}_\epsilon u(t)  := e^{t \frac A\e}x + \int_0^t e^{(t-s)\frac A\e} F(s,u(s))ds +  \a(\epsilon)\, w_{A,Q}^\epsilon(u)(t) +\beta(\epsilon)\, w_{A,B}^\epsilon(t) }\\
\vspace{.1mm}\\
\ds{ + \frac{\a(\e)}{\sqrt{\gamma(\e)}}\,\int_0^t e^{(t-s)\frac A\e} G(s,u(s))\le[\sqrt{Q}\varphi_H(s)\r]ds }\\
\vspace{.1mm}\\
\ds{ + \frac{\beta(\e)}{\sqrt{\gamma(\e)}}\,(\delta_0 - A) \int_0^t e^{(t-s)\frac A\e} N_{\delta_0} \le[\Sigma(s) \sqrt{B}\varphi_Z(s) \r]ds.}
\end{array}\]
We show that $\mathcal{K}_\epsilon$ is Lipschitz continuous from  $L^p(\Omega;C([0,T];H))$ into itself, with Lipschitz constant going to $0$ as $T \to 0$. This clearly implies the well-posedness of equation \eqref{eq:control_PDE} in $L^p(\Omega;C([0,T];H))$.

Thanks to $\eqref{eq:Bconv_bound},\eqref{eq:Qconv_bound}$ and $\eqref{eq:Qconv_lip}$, since $F(t,\cdot):H \to H$ is Lipschitz-continuous, it suffices to show that the mapping $\Gamma_\epsilon$, defined by
\[\begin{array}{l}
\ds{\Gamma_\epsilon(u)(t)  = \frac{\a(\e)}{\sqrt{\gamma(\e)}}\,\int_0^t e^{(t-s)\frac A\e} G(s,u(s))\sqrt{Q}\varphi_H(s)ds}\\
\vspace{.1mm}\\
\ds{ + \frac{\beta(\e)}{\sqrt{\gamma(\e)}}\,(\delta_0 - A)\int_0^t e^{(t-s)\frac A\e} N_{\delta_0} \le[\Sigma(s)\sqrt{B}\varphi_Z(s) \r]ds,}
\end{array}\]
maps $L^p(\Omega;C([0,T];H))$ into itself and is Lipschitz continuous with Lipschitz constant going to $0$ as $T \to 0$. To this purpose, we define $ \zeta = \frac{2\rho}{\rho-2}$, where $ \rho < \frac{2d}{d-2}$ satisfies $\eqref{eq:rhohyp}$. Since $\zeta < d$ thanks to  $\eqref{eq:Glemma_3}$, for any $u,v \in L^p(\Omega;C([0,T];H))$, we have
\begin{equation}
\label{f100}
\begin{array}{l}
\ds{ \sup_{0 \leq t \leq T}  \le|\int_0^t  e^{(t-s)\frac A\e}\le(G(s,u(s))-G(s,v(s))\r)\le[\sqrt{Q}\varphi_H(s) \r]ds \r|_H^p }\\
\vspace{.1mm}\\
\ds{ \leq c\,  \sup_{0 \leq t \leq T} \le( \int_0^t \le(\le((t-s)/\e\r)^{-\frac d{2\zeta}}+1\r) |u(s)-v(s)|_H|\varphi_H(s)|_H ds  \r)^p}\\
\vspace{.1mm}\\
\ds{\leq c_p\, (T^{\frac p2}+T^{(1-\frac d\zeta)\frac p2})\le[ \le( \int_0^T |\varphi_H(s)|_H^2 ds \r)^{\frac p2} \sup_{0 \leq t \leq T} |u(s)-v(s)|_H^p \r] }\\
\vspace{.1mm}\\
\ds{ \leq c_{p,M,T}\,  \sup_{0 \leq t \leq T} |u(s)-v(s)|_H^p,}
\end{array}\end{equation}
where, in the last step, we used the fact that $|\varphi_H|_{L^2(0,T;H)}^2\leq |\varphi|_{L^2(0,T;V)}^2 \leq M$, $\mathbb{P}$-a.s.. 

To conclude our proof of the well-posedness, we show that the second term in $\Gamma_\epsilon$ is in $L^p(\Omega;C([0,T];H))$. Due to $\eqref{eq:Neuman_bounded}$ and $\eqref{eq:sem_domain}$, the operator 
\begin{equation}
\label{sro}
S_\rho := (\delta_0 - A)^{\frac{3-\rho}4} N_{\delta_0}
\end{equation}
 belongs to $\mathcal{L}(Z,H)$, for any $\rho > 0$. Therefore, for any $t>0$, we have
\begin{equation}\label{eq:Srho_identity}
(\delta_0 - A) e^{tA}N_{\delta_0}    = e^{\frac t2 A}(\delta_0-A)^{\frac{1+\rho}4} e^{\frac t2 A} S_\rho,
\end{equation}
and
\[\begin{array}{l}
\ds{\le|(\delta_0 - A)  \int_0^t e^{(t-s)\frac A\e} N_{\delta_0}[ \Sigma(s)\sqrt{B}\varphi_Z(s)] ds \r|_H}\\
\vspace{.1mm}\\
\ds{  
\leq \int_0^t \le|e^{(t-s) \frac A{2\e}} (\delta_0 - A)^{\frac{1+\rho}4} e^{(t-s) \frac A{2\e}} S_\rho \le[\Sigma(s)\sqrt{B}\varphi_Z(s)\r] \r|_H ds}\\
\vspace{.1mm}\\
\ds{\leq c \int_0^t  \le[1 +(t-s)^{-\frac{1+\rho}{4}} \r] |\varphi_Z(s)|_Z  ds.}
\end{array}\]
Thus, by taking the $p$-th moment and choosing $\rho < 1$, we get
\begin{equation}
\label{f200}
\begin{array}{l}
\ds{ \sup_{0 \leq t \leq T}  \le|(\delta_0-A) \int_0^t e^{(t-s)\frac A\e} N_{\delta_0}[\Sigma(s)\sqrt{B} \varphi_Z(s)]ds \r|^p_H}\\
\vspace{.1mm}\\
\ds{  \leq c\, \sup_{0 \leq t \leq T} \le(\int_0^t [1+s^{-\frac{1+\rho}2}]ds \r)^{\frac p2} \le(\int_0^t |\varphi_Z(s)|_Z^2ds \r)^{\frac p2}  \leq c_p\,M^{\frac p2} (T^{\frac p2}+T^{\frac{p(1-\rho)}4}).}
\end{array}\end{equation}
\medskip

Next, we prove that estimate \eqref{eq:apriori} holds.
To this purpose, we first remark that due to \eqref{lim}
\begin{equation}
\label{lim*}
\lim_{\e\to 0} \frac{\a(\e)}{\sqrt{\gamma(\e)}}=\frac 1{1+\bar{\rho}} \in\,[0,1],\ \ \ \lim_{\e\to 0} \frac{\beta(\e)}{\sqrt{\gamma(\e)}}=\frac {\bar{\rho}}{1+\bar{\rho}} \in\,[0,1].\end{equation}
In particular, both $\a(\e)/\sqrt{\gamma(\e)}$ and $\beta/\sqrt{\gamma(\e)}$ remain uniformly bounded, with respect to $\e \in\,(0,1]$.

Thus, thanks to \eqref{lim*}, by proceeding as in the proofs of \eqref{f100} and \eqref{f200}, due to \eqref{eq:Bconv_bound} and \eqref{eq:Qconv_bound}, we have
\[\mathbb{E}\,\sup_{0\leq t\leq T}|u^{x,\varphi}_\e(t)|_H^p\leq c_p\,\le(1+|x|_H^p\r)+c_{p,M,T}\le(1+\mathbb{E}\,\sup_{0\leq t\leq T}|u^{x,\varphi}_\e(t)|_H^p\r),\ \ \ \ \e \in\,(0,1],\]
for some constant $c_{p,M,T}>0$ such that $c_{p,M,T}\to 0$, as $T\to 0$. This means that there exists $T_0>0$ such that 
\[\sup_{\e \in\,(0,1]}\,\mathbb{E}\,\sup_{0\leq t\leq T_0}|u^{x,\varphi}_\e(t)|_H^p\leq 2\,c_p\,\le(1+|x|_H^p\r).\]
By a bootstrap argument, this yields $\eqref{eq:apriori}$.

\end{proof}
\begin{proposition}
Assume that Hypothesis \ref{hyp:coeff} hold, and fix any $M > 0$. Then, for any $\varphi = (\varphi_H,\varphi_Z) \in \mathcal{P}^M(V)$, the random differential equation $\eqref{eq:control_ODE}$ has a unique adapted solution, $u^{x,\varphi} \in L^p(\Omega;C([0,T];\R))$, for any $T>0$ and $p\geq 1$.
\end{proposition}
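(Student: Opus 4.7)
The plan is to recast equation \eqref{eq:control_ODE} as an integral equation and run a contraction argument in $L^p(\Omega;C([0,T];\R))$, exactly parallel to the proof of Proposition \ref{prop:wellposed} but much easier since the unknown is one-dimensional and no semigroup or boundary lifting is present. Define the map
\[
\mathcal{J}u(t) := \langle x,\mu\rangle + \int_0^t \bar{F}(s,u(s))\,ds + \frac{1}{1+\bar{\rho}}\int_0^t \bar{G}(s,u(s))\bigl[\sqrt{Q}\varphi_H(s)\bigr] ds + \frac{\bar{\rho}}{1+\bar{\rho}}\int_0^t \bar{\Sigma}(s)\bigl[\sqrt{B}\varphi_Z(s)\bigr] ds,
\]
acting on adapted processes in $L^p(\Omega;C([0,T];\R))$. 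The last integrand is a pathwise deterministic function of $t$ bounded in absolute value by $c\,|\varphi_Z(s)|_Z$, since $\bar{\Sigma}(s) \in \mathcal{L}(Z,\R)$ uniformly in $s \in [0,T]$ thanks to \eqref{eq:Neuman_bounded} and Hypothesis \ref{hyp:coeff}; by Cauchy--Schwarz and the fact that $|\varphi|_{L^2(0,T;V)}^2\leq M$ $\mathbb{P}$-a.s., this term lies in $L^\infty(\Omega;C([0,T];\R))$.

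Next I would verify that $\mathcal{J}$ is a Lipschitz self-map whose constant tends to $0$ as $T\to 0$. For $u,v \in L^p(\Omega;C([0,T];\R))$, the uniform Lipschitz continuity of $\bar{F}(t,\cdot):\R\to\R$ yields a contribution bounded by $c\,T\sup_{s\leq T}|u(s)-v(s)|$. For the $\bar{G}$ term, note that $u(s)$ and $v(s)$ are constants in space, so their $H_\mu$-norms coincide with their absolute values; combining \eqref{eq:Gbar_lip} with the continuous embedding $H \hookrightarrow H_\mu$ of Remark \ref{rem:embedding} and the boundedness of $\sqrt{Q}$ on $H$, I obtain
\[
\Bigl|\bar{G}(s,u(s))\sqrt{Q}\varphi_H(s) - \bar{G}(s,v(s))\sqrt{Q}\varphi_H(s)\Bigr| \leq c\,|\varphi_H(s)|_H\,|u(s)-v(s)|.
\]
Integrating and applying Cauchy--Schwarz with $\int_0^T|\varphi_H(s)|_H^2\,ds\leq M$ gives a Lipschitz contribution bounded by $c\sqrt{M\,T}\,\sup_{s\leq T}|u(s)-v(s)|$. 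Taking $L^p(\Omega)$-norms and summing yields
\[
\|\mathcal{J}u - \mathcal{J}v\|_{L^p(\Omega;C([0,T];\R))} \leq c_{p,M}\bigl(T + \sqrt{T}\bigr)\,\|u-v\|_{L^p(\Omega;C([0,T];\R))},
\]
which for small $T$ is a strict contraction. An analogous estimate, starting from the linear growth of $\bar{F}$ and $\bar{G}$, shows $\mathcal{J}$ maps $L^p(\Omega;C([0,T];\R))$ into itself.

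Thus the Banach fixed point theorem yields a unique adapted solution $u^{x,\varphi}$ on a small interval $[0,T_0]$, where $T_0$ depends only on $p$ and $M$ (not on $x$ or $\varphi$). Since the a priori bound and the Lipschitz constant depend on $\varphi$ only through $M$, the local solution can be concatenated in a standard bootstrap argument to extend to all of $[0,T]$. There is no real obstacle here: the one-dimensionality of the state space and the uniform $L^2$ bound on $\varphi$ make this considerably simpler than the analogous proof for \eqref{eq:control_PDE}; the only point that requires care is making sure the Lipschitz estimate on $\bar{G}(s,\cdot)$ is applied via \eqref{eq:Gbar_lip} with $h = \sqrt{Q}\varphi_H(s)$ viewed as an element of $H_\mu$, which is why the embedding $H\hookrightarrow H_\mu$ is invoked.
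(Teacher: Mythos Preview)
Your proposal is correct and follows essentially the same route as the paper's proof: both set up the integral operator, use the uniform Lipschitz continuity of $\bar{F}$, invoke \eqref{eq:Gbar_lip} together with Cauchy--Schwarz and the bound $|\varphi|_{L^2(0,T;V)}^2\leq M$ for the $\bar{G}$ term, and use \eqref{eq:Neuman_bounded} for the $\bar{\Sigma}$ term to obtain a contraction for small $T$. Your explicit mention of the embedding $H\hookrightarrow H_\mu$ when applying \eqref{eq:Gbar_lip} with $h=\sqrt{Q}\varphi_H(s)$ is a nice clarification that the paper leaves implicit.
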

\begin{proof}
As before, existence and uniqueness follows from the Lipschitz continuity of the mapping $\mathcal{K}:L^p(\Omega;C([0,T];\R)) \to L^p(\Omega;C([0,T];\R))$, defined by
\[\begin{array}{l}
\ds{\mathcal{K} u(t) = \langle x, \mu \rangle + \int_0^t \bar{F}(s,u(s))ds}\\
\vspace{.1mm}\\
\ds{+\frac 1{1+\bar{\rho}}\int_0^t\bar{G}(s,u(s))\le[\sqrt{Q}\varphi_H(s)\r]ds + \frac{\bar{\rho}}{1+\bar{\rho}} \int_0^t \bar{\Sigma}(t)\le[\sqrt{B}\varphi_Z(s)\r] ds.}
\end{array}\]
Let $u,v \in L^p(\Omega;C([0,T];\R))$. Due to $\eqref{eq:Gbar_lip}$ and the fact that $\varphi \in \mathcal{P}^M(V)$, we have
\[\begin{array}{l}
\ds{\mathbb{E} \sup_{0 \leq t \leq T} \le|  \int_0^t \le( \bar{G}(s,u(s))  -\bar{G}(s,v(s)) \r)\le[ \sqrt{Q}\varphi_H(s)\r]  ds \r|^p }\\
\vspace{.1mm}\\
\ds{ \leq c\,\mathbb{E} \sup_{0 \leq t \leq T} \le( \int_0^t |\varphi_H(s)|_H |u(s)-v(s)|\,ds \r)^p \leq c_p\, T^{\frac p2} M^{\frac p2} \mathbb{E}\sup_{0 \leq t \leq T} |u(t)-v(t)|^p.}
\end{array}\]
Moreover, due to $\eqref{eq:Neuman_bounded}$, we easily have
\[\begin{array}{l}
\ds{\mathbb{E} \sup_{0 \leq t \leq T} \le|\int_0^t \bar{\Sigma}(t)\le[\sqrt{B}\varphi_Z(s)\r]ds  \r|^p\!\! = \mathbb{E} \sup_{0 \leq t \leq T} \le| \delta_0 \int_0^t\langle N_{\delta_0}\le(\Sigma(s)[\sqrt{B}\varphi_Z(s)]\r), \mu \rangle ds \r|^p \!\! \leq c\, T^{\frac p2}M^{\frac p2}.}\\
\vspace{.1mm}\\
\ds{}
\end{array}\]
Since $\bar{F}(t,\cdot):\R \to \R$ is Lipschitz continuous, we conclude that $\mathcal{K}$ is Lipschitz continuous from $L^p(\Omega;C([0,T];\R)$ into itself, and the well-posedness of equation $\eqref{eq:control_ODE}$ follows.
\end{proof}

\subsection{Convergence}
Clearly, in Theorem \ref{theor:laplace} Condition (i) follows immediately from Condition (ii). On the other hand, due to  Skorokhod's theorem, Condition (ii) in Theorem \ref{theor:laplace} follows from the following convergence result.

\begin{proposition}\label{prop:conv}
Assume that Hypotheses \ref{hyp:longterm}, \ref{hyp:coeff} and \ref{hyp:eigenvalues} hold. Moreover, assume that \eqref{lim} holds. Suppose that $\{x_\epsilon\}_{\epsilon > 0} \subset H$ converges weakly to $x \in H$, as $\epsilon \to 0$, and suppose that $\{\varphi_\epsilon\}_{\epsilon > 0} \subset \mathcal{P}^M(V)$ converges weakly in $L^2(0,T;V)$ to $\varphi$, as $\epsilon \to 0$, $\mathbb{P}$-a.s. Then for any $\delta > 0$ and $p \geq 1$,
\begin{equation}\label{eq:conv_limit}
\lim_{\epsilon \to 0} \mathbb{E} \sup_{\delta \leq t \leq T} |u_\epsilon^{x_\epsilon,\varphi_\epsilon} - u^{x,\varphi}|_{H_\mu}^p = 0.
\end{equation}
\end{proposition}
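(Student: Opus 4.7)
My strategy adapts the averaging argument of \cite{cerrai2011} (Theorem 4.1) to handle both the additional control terms and the weak convergence of $\{x_\epsilon\}$ and $\{\varphi_\epsilon\}$. Writing $u_\epsilon:=u_\epsilon^{x_\epsilon,\varphi_\epsilon}$, the starting point is the mild formulation
\[
\begin{array}{l}
\ds u_\epsilon(t) = e^{tA/\epsilon}x_\epsilon + \int_0^t e^{(t-s)A/\epsilon}F(s,u_\epsilon(s))\,ds + \alpha(\epsilon)w^\epsilon_{A,Q}(u_\epsilon)(t) + \beta(\epsilon) w^\epsilon_{A,B}(t) \\
\ds \quad + \frac{\alpha(\epsilon)}{\sqrt{\gamma(\epsilon)}}\int_0^t e^{(t-s)A/\epsilon}G(s,u_\epsilon(s))[\sqrt{Q}\varphi_{\epsilon,H}(s)]\,ds \\
\ds \quad + \frac{\beta(\epsilon)}{\sqrt{\gamma(\epsilon)}}(\delta_0-A)\int_0^t e^{(t-s)A/\epsilon}N_{\delta_0}[\Sigma(s)\sqrt{B}\varphi_{\epsilon,Z}(s)]\,ds.
\end{array}
\]
Proposition \ref{prop:wellposed} gives a uniform bound for $u_\epsilon$ in $L^p(\Omega;C([0,T];H))$ (since weakly convergent sequences in $H$ are bounded). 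The two pure-noise terms are $O(\alpha(\epsilon))$ and $O(\beta(\epsilon))$ in $L^p(\Omega;C([0,T];H))$ by the two cited Lemmas, so they vanish as $\epsilon\to 0$. The initial-condition term $e^{tA/\epsilon}x_\epsilon$ converges to $\langle x,\mu\rangle$ uniformly on $[\delta,T]$ in $H_\mu$: Hypothesis \ref{hyp:longterm} gives the bound $c\,e^{-\gamma t/\epsilon}|x_\epsilon|_{H_\mu}$ on the distance to $\langle x_\epsilon,\mu\rangle$, and $\langle x_\epsilon,\mu\rangle\to\langle x,\mu\rangle$ follows from weak convergence of $x_\epsilon$ together with $m\in L^\infty(\mathcal{O})$ (Remark \ref{rem2.3}).

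The heart of the argument is a Khasminskii-type averaging for the three remaining terms. For a mesh $\Delta>0$, I would freeze the slow variables on the intervals $[k\Delta,(k+1)\Delta)$, replacing $F(s,u_\epsilon(s))$ by $F(s,u_\epsilon(k\Delta))$ and $G(s,u_\epsilon(s))$ by $G(s,u_\epsilon(k\Delta))$ in the integrals. The Lipschitz continuity of $F,G$ and a uniform-in-$\epsilon$ modulus of continuity for $u_\epsilon$ (obtained from the mild formulation plus the bounds \eqref{eq:Qconv_bound}, \eqref{eq:Bconv_bound} and standard estimates for $(\delta_0-A)e^{tA/\epsilon}N_{\delta_0}$ via the identity \eqref{eq:Srho_identity}) control the substitution error by $o_\Delta(1)$, uniformly in $\epsilon$. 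On each frozen subinterval the spectral gap \eqref{eq:longterm} yields, after the change of variable $r=(t-s)/\epsilon$,
\[
\int_{k\Delta}^{(k+1)\Delta\wedge t} e^{(t-s)A/\epsilon}\bigl[G(k\Delta,u_\epsilon(k\Delta))\sqrt{Q}\varphi_{\epsilon,H}(s)\bigr]ds
-\int_{k\Delta}^{(k+1)\Delta\wedge t}\bar{G}(k\Delta,u_\epsilon(k\Delta))[\sqrt{Q}\varphi_{\epsilon,H}(s)]\,ds\to 0
\]
in $H_\mu$, uniformly in $t\in[\delta,T]$, because the difference is controlled by $e^{-\gamma(t-k\Delta)/\epsilon}$ times an $L^2$-factor in $\varphi_{\epsilon,H}$ that is bounded by $\sqrt{M}$. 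An analogous argument handles the boundary term, where the operator $(\delta_0-A)e^{\cdot A/\epsilon}N_{\delta_0}$ is first rewritten using \eqref{eq:Srho_identity} so that the semigroup acts on a fixed $H$-valued function to which the spectral gap applies, and the drift term (as in \cite{cerrai2011}).

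After all these reductions, the right-hand side differs from
\[
\langle x,\mu\rangle+\int_0^t \bar{F}(s,u_\epsilon(s))\,ds+\frac{1}{1+\bar\rho}\int_0^t \bar{G}(s,u_\epsilon(s))[\sqrt{Q}\varphi_{\epsilon,H}(s)]\,ds+\frac{\bar\rho}{1+\bar\rho}\int_0^t \bar{\Sigma}(s)[\sqrt{B}\varphi_{\epsilon,Z}(s)]\,ds
\]
by a term that vanishes in $C([\delta,T];H_\mu)$ as $\epsilon\to 0$ then $\Delta\to 0$ (using $\alpha(\epsilon)/\sqrt{\gamma(\epsilon)}\to 1/(1+\bar\rho)$ and $\beta(\epsilon)/\sqrt{\gamma(\epsilon)}\to\bar\rho/(1+\bar\rho)$ from \eqref{lim*}). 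Tightness of $\{u_\epsilon\}$ in $C([\delta,T];H_\mu)$ follows from the modulus-of-continuity estimates; any subsequential limit $u^*$ is constant in space, and by weak convergence of $\varphi_\epsilon$ in $L^2(0,T;V)$ (the linear functionals $\psi\mapsto\int_0^t\bar{G}(s,u^*(s))[\sqrt{Q}\psi_H(s)]ds$ and $\psi\mapsto\int_0^t\bar{\Sigma}(s)[\sqrt{B}\psi_Z(s)]ds$ are continuous on $L^2(0,T;V)$ because $u^*$ is deterministic-in-space and $\bar{G},\bar{\Sigma}$ are bounded on bounded sets of $H_\mu$) together with the Lipschitz continuity of $\bar{F}$, $u^*$ must solve the random ODE \eqref{eq:control_ODE}. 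Uniqueness of that ODE forces $u^*=u^{x,\varphi}$, giving convergence of the full family and \eqref{eq:conv_limit} after combining with the $L^p$ bound.

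\paragraph{Main obstacle.} The key difficulty is the interchange of three limits in the control terms: the fast-averaging limit driven by $e^{tA/\epsilon}$, the weak limit of $\varphi_\epsilon$ in $L^2(0,T;V)$, and the (necessarily strong) convergence of $u_\epsilon$ needed to substitute the limit inside the Lipschitz coefficient $G(s,\cdot)$. Weak convergence of $\varphi_\epsilon$ alone is too weak to pass to the limit against the $\epsilon$-dependent test function $e^{(t-s)A/\epsilon}G(s,u_\epsilon(s))^*\cdot$, so the piecewise-freezing on a mesh $\Delta$, followed by the bootstrap over $\Delta\to 0$, is essential to decouple these three effects.
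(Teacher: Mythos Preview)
Your overall strategy is plausible and would likely go through, but it is considerably more elaborate than what the paper does, and two of its main devices---the Khasminskii-type freezing on a mesh $\Delta$ and the tightness/compactness identification of the limit---are not needed here.

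The paper's proof is a direct Gronwall argument. The point you may have overlooked is that the spectral gap \eqref{eq:longterm} applies pointwise in $s$: for any $h\in H_\mu$ one has $|e^{(t-s)A/\epsilon}h-\langle h,\mu\rangle|_{H_\mu}\le c\,e^{-\gamma(t-s)/\epsilon}|h|_{H_\mu}$, with no need to hold $h$ constant over a subinterval. Applying this directly to $h=F(s,u_\epsilon(s))$, to $h=G(s,u_\epsilon(s))[\sqrt{Q}\varphi^\epsilon_H(s)]$ (after the product estimate \eqref{eq:Glemma_3} and a splitting $e^{(t-s)A/\epsilon}=e^{(t-s)A/(2\epsilon)}e^{(t-s)A/(2\epsilon)}$), and to the boundary term via \eqref{eq:Srho_identity}, one replaces each convolution by its averaged version $\bar F(s,u_\epsilon(s))$, $\bar G(s,u_\epsilon(s))[\sqrt{Q}\varphi^\epsilon_H(s)]$, $\bar\Sigma(s)[\sqrt{B}\varphi^\epsilon_Z(s)]$, with an error that integrates to $O(\epsilon^{1/2})$ in $H_\mu$. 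No mesh $\Delta$ and no modulus-of-continuity estimate for $u_\epsilon$ is needed.

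The second difference is in how the weak limit of $\varphi_\epsilon$ and the strong limit of $u_\epsilon$ are decoupled. Rather than extracting a subsequential limit of $u_\epsilon$ by compactness and then identifying it, the paper adds and subtracts the \emph{known} limit $u^{x,\varphi}$ inside the Lipschitz coefficients. For instance, $\bar G(s,u_\epsilon(s))[\sqrt{Q}\varphi^\epsilon_H(s)]-\bar G(s,u^{x,\varphi}(s))[\sqrt{Q}\varphi_H(s)]$ is split into $(\bar G(s,u_\epsilon)-\bar G(s,u^{x,\varphi}))[\sqrt Q\varphi^\epsilon_H]$, which by \eqref{eq:Gbar_lip} contributes $c_M\int_0^T|u_\epsilon-u^{x,\varphi}|_{H_\mu}^2$ to a Gronwall inequality, and $\bar G(s,u^{x,\varphi}(s))[\sqrt Q(\varphi^\epsilon_H-\varphi_H)]$, in which the coefficient is now a fixed path. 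For the latter, the functionals $\psi\mapsto\Lambda_\psi(t):=\int_0^t\bar G(s,u^{x,\varphi}(s))[\sqrt Q\psi(s)]\,ds$ are bounded linear on $L^2(0,T;H)$ for each $t$ and equicontinuous in $t$ uniformly over $\psi\in S^M$, so weak convergence of $\varphi^\epsilon_H$ plus Arzel\`a--Ascoli gives $\sup_t|\Lambda_{\varphi^\epsilon_H}(t)-\Lambda_{\varphi_H}(t)|\to 0$. Combining these pieces yields an inequality of the form
\[
\mathbb E\sup_{\delta\le t\le T}|u_\epsilon-u^{x,\varphi}|_{H_\mu}^2\le r_T(\epsilon)+c_T\int_\delta^T\mathbb E\sup_{\delta\le s\le t}|u_\epsilon-u^{x,\varphi}|_{H_\mu}^2\,dt,
\]
and Gronwall closes the argument for $p=2$; higher $p$ follows from \eqref{eq:apriori} and dominated convergence.

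Your route buys generality (it would adapt to situations where the limit is not known a priori), at the cost of having to justify a uniform-in-$\epsilon$ modulus of continuity for $u_\epsilon$ in $H_\mu$---which, if you trace it through, requires essentially the same spectral-gap and kernel estimates as the direct argument---and of handling pathwise compactness in the presence of the stochastic convolutions. The paper's decomposition sidesteps both issues.
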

\begin{proof}
We denote $\varphi_\epsilon = (\varphi_H^\epsilon,\varphi_Z^\epsilon)$ and $\varphi = (\varphi_H,\varphi_Z)$. We first write
\[\begin{array}{l}
\ds{u_\epsilon^{x_\epsilon,\varphi_\epsilon}(t)  - u^{x,\varphi}(t) =   \le( e^{t \frac A\e}x_\epsilon - \langle x, \mu \rangle \r) +  \a(\epsilon)\, w_{A,Q}^\epsilon(u_\epsilon^{x_\epsilon,\varphi_\epsilon})(t) +\beta(\epsilon)\, w_{A,B}^\epsilon(t) }\\
\vspace{.1mm}\\
\ds{ + \le( \int_0^t e^{(t-s)\frac A\e} F(s,u_\epsilon^{x_\epsilon,\varphi_\epsilon}(s))ds  -\int_0^t \bar{F}(s,u^{x,\varphi}(s)) ds\r) }\\
\vspace{.1mm}\\
\ds{ +\frac{\a(\e)}{\sqrt{\gamma(\e)}} \int_0^t e^{(t-s)\frac A\e} G(s,u_\epsilon^{x_\epsilon,\varphi_\epsilon}(s))\le[\sqrt{Q}\varphi_H^\epsilon(s)\r]ds - \frac 1{1+\bar{\rho}}\int_0^t \bar{G}(s,u^{x,\varphi}(s)) \le[\sqrt{Q}\varphi_H(s)\r] ds   }\\
\vspace{.1mm}\\
\ds{ +\frac{\beta(\e)}{\sqrt{\gamma(\e)}}  (\delta_0 - A) \int_0^t e^{(t-s)\frac A\e} N_{\delta_0} \le[\Sigma(s) \sqrt{B}\varphi_Z^\epsilon(s) \r]ds - \frac {\bar{\rho}}{1+\bar{\rho}}\int_0^t \bar{\Sigma}(s)\le[\sqrt{B}\varphi_Z(s)\r]\,ds }\\
\vspace{.1mm}\\
\ds{ =: \le(e^{t \frac A\e}x_\epsilon - \langle x, \mu \rangle \r) +  \a(\epsilon)\, w_{A,Q}^\epsilon(u_\epsilon^{x_\epsilon,\varphi_\epsilon})(t) +\beta(\epsilon)\, w_{A,B}^\epsilon(t) + \sum_{i=1}^3 I^i_\epsilon(t).}
\end{array}\]
Thanks to estimates $\eqref{eq:Bconv_bound}$, $\eqref{eq:Qconv_bound}$ and $\eqref{eq:apriori}$, as well as Lemmas \ref{lem:Fterm}, \ref{lem:Gterm}, and \ref{lem:Sterm} (where a-priori bounds for the terms $I^i_\e(t)$ are proven), there exists some non-negative function $r_{T, p}(\epsilon)$ going to $0$, as $\epsilon \to 0$, such that
\begin{equation}\label{eq:conv_Gron}
\begin{array}{l}
\ds{
 \mathbb{E} \sup_{\delta \leq t \leq T}  |u_\epsilon^{x_\epsilon,\varphi_\epsilon}(t)  - u^{x,\varphi}(t)|_{H_\mu}^2}\\
\vspace{.1mm}\\
\ds{ \leq \sup_{\delta \leq t \leq T} \le|e^{t \frac A\e}x_\epsilon - \langle x, \mu \rangle \r|_{H_\mu}^2 + r_{T,p}(\epsilon)  + c_{T} \int_\delta^T \mathbb{E} \sup_{\delta \leq s \leq t} \le|u_\epsilon^{x_\epsilon,\varphi_\epsilon}(s)-u^{x,\varphi}(s)\r|_{H_\mu}^2.}
\end{array}\end{equation}
We have
\[\begin{array}{l}
\ds{\sup_{\delta \leq t \leq T} \big|e^{t \frac A\e}x_\epsilon  - \langle x, \mu \rangle \big|_{H_\mu}^2  \leq 2 \sup_{\delta \leq t \leq T} \big|e^{t \frac A\e}x_\epsilon - \langle x_\epsilon, \mu \rangle \big|_{H_\mu}^2 + 2 \big|\langle x_\epsilon-x,\mu \rangle \big|^2 }\\
\vspace{.1mm}\\
\ds{ \leq 2\, c\, e^{-\frac{2\gamma \delta}\e} |x_\epsilon|_{H_\mu} + 2 \big|\langle x_\epsilon-x,\mu \rangle \big|^2.}
\end{array}\]
Therefore, since the sequence $\{x_\epsilon\}_{\e>0}$ converges weakly to $x$ in $H_\mu$, we have
\[\lim_{\e\to 0}\,\sup_{\delta \leq t \leq T} \big|e^{t \frac A\e}x_\epsilon  - \langle x, \mu \rangle \big|_{H_\mu}^2=0.\]
This fact, together with  $\eqref{eq:conv_Gron}$ and Gronwall's Lemma, allows us to  conclude that \eqref{eq:conv_limit} holds for $p\geq 2$.
To obtain the result for $p > 2$, we use  estimate $\eqref{eq:apriori}$ and the dominated convergence theorem.
\end{proof}

In the next section  we show that for every $u \in\,C([\d,T];H_\mu)$ the mapping $x \in\,H\mapsto \hat{I}_{\delta,T}^x(u) \in\,[0,+\infty]$ is weakly lower semicontinuous. Due to the convergence result proved in Proposition \ref{prop:conv} and to Theorem \ref{theor:laplace}, this implies  that the family $\{u^x_\e\}_{\e>0}$ satisfies a uniform Laplace principle in $C([\d,T];H_\mu)$, with speed $\gamma(\e)$ and action functional $\hat{I}_{\delta,T}^x$.

\subsection{Conclusion}\label{sec:action}
In this section, we first show that under Hypothesis \ref{hyp:nondeg}, we have $\hat{I}_{\delta,T}^x = I_{\delta,T}^x$ where $I_{\delta,T}^x$ is the action functional defined in $\eqref{rem:action}$. Then, we show that the action functionals $I_{\delta,T}^x$ satisfy the properties required to extend the uniform Laplace principle into a uniform large deviation principle (see Proposition \ref{prop:equiv}). In particular, the mapping $x \mapsto \hat{I}^x_{\d, T}(u) \in\,[0,+\infty]$ is weakly lower semicontinuous, for every $u \in\,C([0,T];H_\mu)$. This will conclude the proof of Theorem \ref{theor:main}.

\begin{lemma} For every $0\leq a<b$, let us define 
\begin{equation*}
I_{a,b}(w) :=\frac{1}{2} \int_a^b  \frac{ \le|{w}^\prime(t) - \bar{F}(t,w(t)) \r|^2}{\mathcal{H}_{\bar{\rho}}(t,w(t))}\,dt.
\end{equation*}
Then, we have
\begin{equation}
\label{caratterizzazione}
\hat{I}_{\delta,T}^x(u) = I_{\delta,T}^x(u)= \inf_{\substack{w \in C([0,T];\R) \\ w(0) = \langle x, \mu \rangle,\ w|_{[\delta,T]} = u}} I_{0,T}(w).
\end{equation}
\end{lemma}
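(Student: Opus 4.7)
The plan is to turn the definition of $\hat I_{\delta,T}^x$ into a two-layer minimization, first over the extension $w$ of $u$ to $[0,T]$ and then, for each fixed $w$, over the set of controls $\varphi$ producing that extension via the scalar ODE \eqref{eq:control_ODE}. Since $u^{x,\varphi}$ is by construction a scalar absolutely continuous function, we may restrict to $u \in \Psi_{\delta,T}$ throughout; if $u \notin \Psi_{\delta,T}$, both sides of \eqref{caratterizzazione} equal $+\infty$ trivially. Thus, assume $u \in \Psi_{\delta,T}$ and identify it with a real-valued function on $[\delta,T]$.

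Next, I would rewrite the ODE \eqref{eq:control_ODE} as a pointwise scalar linear constraint on $\varphi$. Using the definitions of $\bar G$ and $\bar\Sigma$ and the self-adjointness of $\Sigma(t)$ on $Z$,
\begin{equation*}
\bar G(t,w)\sqrt Q\varphi_H = \langle \varphi_H,\sqrt Q[G(t,w)m]\rangle_H,\qquad \bar\Sigma(t)\sqrt B\varphi_Z = \delta_0\langle \varphi_Z,\sqrt B[\Sigma(t)N_{\delta_0}^* m]\rangle_Z.
\end{equation*}
Set
\begin{equation*}
\Phi(t,w) := \le(\frac{1}{1+\bar\rho}\sqrt Q[G(t,w)m],\ \frac{\bar\rho}{1+\bar\rho}\,\delta_0\sqrt B[\Sigma(t)N_{\delta_0}^* m]\r) \in V.
\end{equation*}
A direct computation gives $|\Phi(t,w)|_V^2 = \mathcal{H}_{\bar\rho}(t,w)$, and the ODE \eqref{eq:control_ODE} reduces to the pointwise scalar constraint
\begin{equation*}
w'(t)-\bar F(t,w(t)) = \langle \varphi(t),\Phi(t,w(t))\rangle_V,\qquad w(0)=\langle x,\mu\rangle.
\end{equation*}

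For any fixed extension $w \in C([0,T];\R)$ with $w(0)=\langle x,\mu\rangle$ and $w|_{[\delta,T]}=u$, I would then solve the pointwise problem of minimizing $|\varphi(t)|_V^2$ subject to the linear constraint above. By Cauchy--Schwarz (or equivalently a one-dimensional Lagrange multiplier argument), the unique minimizer is
\begin{equation*}
\varphi_*(t) = \frac{w'(t)-\bar F(t,w(t))}{\mathcal{H}_{\bar\rho}(t,w(t))}\,\Phi(t,w(t)),
\end{equation*}
achieving $|\varphi_*(t)|_V^2 = (w'(t)-\bar F(t,w(t)))^2/\mathcal{H}_{\bar\rho}(t,w(t))$. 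Division is legitimate by Hypothesis \ref{hyp:nondeg}(ii), which gives a positive lower bound on $\mathcal{H}_{\bar\rho}$. Integrating in $t$ yields $\tfrac12\int_0^T|\varphi_*|_V^2\,dt = I_{0,T}(w)$. Consequently, whenever $I_{0,T}(w)<\infty$, the admissible $\varphi_*$ lies in $L^2(0,T;V)$ and realizes the minimum; and conversely, if no admissible $\varphi\in L^2(0,T;V)$ exists, then certainly $I_{0,T}(w)=\infty$ (since $\varphi_*$ is the pointwise $L^2$-minimal solution of the constraint).

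Disintegrating $\hat I_{\delta,T}^x(u)$ according to the trajectory $w = u^{x,\varphi}$, I then get
\begin{equation*}
\hat I_{\delta,T}^x(u) = \inf_{w}\ \inf_{\substack{\varphi \in L^2(0,T;V)\\ u^{x,\varphi}=w}} \frac12\int_0^T|\varphi|_V^2\,dt = \inf_{w} I_{0,T}(w),
\end{equation*}
the inner infimum being taken over $w \in C([0,T];\R)$ that are absolutely continuous, satisfy $w(0)=\langle x,\mu\rangle$, and coincide with $u$ on $[\delta,T]$. This is precisely the right-hand side of \eqref{caratterizzazione}, and by the definition of $I_{\delta,T}^x$ in \eqref{eq:action}, it also equals $I_{\delta,T}^x(u)$. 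The only subtle step is the pointwise minimization, which is however entirely elementary once Hypothesis \ref{hyp:nondeg}(ii) is in hand; the rest is bookkeeping between the definitions.
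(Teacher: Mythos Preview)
Your proposal is correct and follows essentially the same approach as the paper: both disintegrate $\hat I_{\delta,T}^x$ into a two-layer minimization (first over extensions $w$, then over controls $\varphi$ producing $w$), solve the inner pointwise-constrained minimization via Cauchy--Schwarz to identify the optimal control $\varphi_*=\hat\varphi$, and invoke Hypothesis~\ref{hyp:nondeg} to justify the division by $\mathcal H_{\bar\rho}$. Your introduction of the vector $\Phi(t,w)\in V$ with $|\Phi(t,w)|_V^2=\mathcal H_{\bar\rho}(t,w)$ is a slightly cleaner packaging of the same computation the paper carries out coordinatewise.
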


\begin{proof}
First, we observe that $\hat{I}_{\delta,T}^x(u) = \infty$, if $u(t,\xi)$ is any function depending on the spatial variable $\xi$. Next, we  notice  that  $\hat{I}_{\delta,T}^x(u)$ can be rewritten as 
\begin{equation*}
\hat{I}^x_{\delta,T}(u) = \inf_{\substack{w \in C([0,T];\R)  \\ w|_{[\delta,T]} = u}} \  \inf_{\substack{\varphi \in L^2(0,T;V) \\  u^{x,\varphi}=w}} \frac{1}{2}  \int_0^T |\varphi(s)|^2_{V} ds,
\end{equation*}
because the condition $\mathcal{G}_\delta(x,\varphi) = u$ does not constrain the values of $\varphi$ on the interval $(0,\delta)$.
 We suppose now that $w = u^{x,\varphi}$, for some $x \in H$ and $\varphi = (\varphi_H,\varphi_Z) \in L^2(0,T;V)$. Then, recalling that $\mu$ has a density $m \in L^\infty(\mathcal{O})$, we have
\[\begin{array}{l}
\ds{|{w}^\prime(t) -  \bar{F}(t,w(t)) | = \frac 1{1+\bar{\rho}}\,\le| \bar{G}(t,w(t))\le[\sqrt{Q}\varphi_H(t)\r] + \bar{\rho}\,\bar{\Sigma}(t)\le[\sqrt{B}\varphi_Z(t) \r] \r|}\\
\vspace{.1mm}\\
\ds{=\frac 1{1+\bar{\rho}}\, \le|\langle \varphi_H(t),\sqrt{Q}\le[G(t,w(t))m \r] \rangle_H +\bar{\rho}\,\delta_0 \langle \varphi_Z(t),  \sqrt{B} \le[\Sigma(t) N_{\delta_0}^*m \r] \rangle_Z \r|}\\
\vspace{.1mm}\\
\ds{ \leq \frac 1{1+\bar{\rho}}\,| \varphi_H(t) |_H\le( \big| \sqrt{Q}\le[G(t,w(t))m\r] \big|_H + \bar{\rho}\,\delta_0\, | \varphi_Z(t) |_Z  \big|  \sqrt{B} \le[\Sigma(t) N_{\delta_0}^*m \r] \big|_Z \r)}\\
\vspace{.1mm}\\
\ds{\leq \frac 1{1+\bar{\rho}}\,|\varphi(t)|_V \le( \big| \sqrt{Q}\le[G(t,w(t))m\r] \big|_H^2 + \bar{\rho}^2\,\delta_0^2\,\big|  \sqrt{B} \le[\Sigma(t) N_{\delta_0}^*m \r] \big|_Z^2 \r)^{1/2}}\\
\vspace{.1mm}\\
\ds{=|\varphi(t)|_V\,\sqrt{\mathcal{H}_{\bar{\rho}}(t,w(t))}.}
\end{array}\]
On the other hand, equality is achieved with the choice,
\[
\hat{\varphi}(t): =  \frac{1}{1+\bar{\rho}}\,\frac{ {w}^\prime(t)-\bar{F}(w(t))}{\mathcal{H}_{\bar{\rho}}(t,w(t))}\le(\sqrt{Q}\le[G(t,w(t))m \r],\bar{\rho}\,\delta_0\, \sqrt{B}\le[\Sigma(t) N_{\delta_0}^* m\r]\r).
\]
Notice that $\hat{\varphi}$ is well defined due to the non-degeneracy condition in  Hypothesis \ref{hyp:nondeg}. Moreover, it is easy to see that $w$ solves equation $\eqref{eq:control_ODE}$ with the control $\hat{\varphi}$, so that $u^{x,\hat{\varphi}} = w$. This minimizing choice of $\varphi = \hat{\varphi}$ gives rise to the action functional $I^x_{\d, T}$.
\end{proof}

Alternatively, we can write the action functional as
\begin{equation}\label{eq:action_alt}
 I_{\delta,T}^x(u)  = \inf_{\substack{w \in C([0,\delta];\R) \\ w(0) = \langle x, \mu \rangle,\ w(\delta)=u(\delta)}} I_{0,\delta}(w)  + I_{\delta,T}(u)  =: J_\d(x,u)+ I_{\delta,T}(u) 
\end{equation}
$J_\d(x,u)$ depends only on the initial condition $x \in H$ and the value of the path $u$ at $t = \delta$, while $I_{\delta,T}(u)$ only depends on the path $u$.

\begin{lemma}
\label{n10}
Assume $H$ is endowed with the weak topology. Then, for every $0<\d<T$, the mapping
$J_\d:H\times C([0,T];\mathbb{R})\to [0,+\infty)$ is continuous.
\end{lemma}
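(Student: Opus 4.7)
My plan is to reduce the statement to the continuity of a finite dimensional value function and then prove the latter by an explicit perturbation of near-minimizers. First, observe that $J_\delta(x,u)$ depends on $(x,u)$ only through the scalar pair $(\langle x,\mu\rangle,u(\delta))\in\R^2$. Since the density $m$ of $\mu$ lies in $L^\infty(\mathcal{O})\subset H$ (Remark \ref{rem2.3}), the functional $x\mapsto\langle x,m\rangle_H=\langle x,\mu\rangle$ is weakly continuous on $H$; and $u\mapsto u(\delta)$ is clearly continuous on $C([0,T];\R)$. Hence it suffices to prove that
\begin{equation*}
\Phi(a,b):=\inf\Bigl\{I_{0,\delta}(w):\,w\in AC([0,\delta];\R),\ w(0)=a,\ w(\delta)=b\Bigr\}
\end{equation*}
is continuous on $\R^2$.

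For the upper semicontinuity, I would fix $(a_n,b_n)\to(a,b)$ and $\varepsilon>0$, choose a near-minimizer $w^\varepsilon$ for $(a,b)$ with $I_{0,\delta}(w^\varepsilon)\leq\Phi(a,b)+\varepsilon$, and consider the affinely bridged path
\begin{equation*}
w_n(t):=w^\varepsilon(t)+(a_n-a)\frac{\delta-t}{\delta}+(b_n-b)\frac{t}{\delta},
\end{equation*}
which satisfies $w_n(0)=a_n$, $w_n(\delta)=b_n$, $\|w_n-w^\varepsilon\|_\infty\to 0$ and $\|w_n'-(w^\varepsilon)'\|_\infty\to 0$. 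The Lipschitz continuity of $\bar F(t,\cdot)$ and of $\mathcal{H}_{\bar\rho}(t,\cdot)$ on bounded sets, the lower bound $\mathcal{H}_{\bar\rho}\geq\theta>0$ from Hypothesis \ref{hyp:nondeg} and the $L^2$ integrability of $(w^\varepsilon)'$ together provide a common $L^1$ dominant for the integrands; the dominated convergence theorem then yields $I_{0,\delta}(w_n)\to I_{0,\delta}(w^\varepsilon)$, so $\limsup_n\Phi(a_n,b_n)\leq\Phi(a,b)+\varepsilon$ and letting $\varepsilon\to 0$ gives upper semicontinuity.

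For the lower semicontinuity, the symmetric argument is in order: assume $L:=\liminf_n\Phi(a_n,b_n)<\infty$ (otherwise the statement is trivial), pick near-minimizers $w_n$ for $(a_n,b_n)$ with $I_{0,\delta}(w_n)\leq\Phi(a_n,b_n)+1/n$, and correct them in the opposite direction to obtain paths $\tilde w_n$ that are admissible for $(a,b)$, so that $\Phi(a,b)\leq I_{0,\delta}(\tilde w_n)$. Using the factorization $p^2-q^2=(p+q)(p-q)$ with $p$ and $q$ the numerators corresponding to $\tilde w_n$ and $w_n$, together with the Lipschitz bounds on $\bar F$ and $\mathcal{H}_{\bar\rho}$ on bounded subsets of $\R$ and the bound $\mathcal{H}_{\bar\rho}\geq\theta$, the increment $I_{0,\delta}(\tilde w_n)-I_{0,\delta}(w_n)$ is controlled by
\begin{equation*}
C_R\,o(1)\,\Bigl(1+\|w_n'-\bar F(\cdot,w_n)\|_{L^2(0,\delta)}\Bigr),\qquad R:=\sup_n\|w_n\|_\infty,
\end{equation*}
which vanishes as soon as $R<\infty$.

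The main obstacle I foresee is precisely the a priori bound $R<\infty$. It is delicate because $\mathcal{H}_{\bar\rho}(t,w)$ may grow quadratically in $w$ (as $g$ is only Lipschitz in its third variable), so the natural inequality $\int_0^\delta|w_n'-\bar F(t,w_n)|^2\,dt\leq 2I_{0,\delta}(w_n)\sup_t\mathcal{H}_{\bar\rho}(t,w_n(t))\leq C(L+1)(1+\|w_n\|_\infty^2)$ is circular once coupled with $w_n(t)=a_n+\int_0^t w_n'\,ds$. I would bypass this by localizing in time: partition $[0,\delta]$ into finitely many subintervals of length $\eta$ small enough that the prefactors produced by Cauchy--Schwarz and the linear growth of $\bar F$ give on each $[t_k,t_{k+1}]$ an inequality of the form $\sup_{[t_k,t_{k+1}]}|w_n|\leq |w_n(t_k)|+\frac{1}{2}\sup_{[t_k,t_{k+1}]}|w_n|+C'$, solvable for the supremum in terms of $|w_n(t_k)|$. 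A finite iteration over the partition then yields a uniform bound in terms of $|a_n|$ and $L$, which closes the argument and, together with the first reduction, finishes the proof of continuity of $J_\delta$.
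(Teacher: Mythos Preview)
Your proposal is correct and the underlying strategy---perturb a near-minimizer to obtain a competitor at a nearby pair of endpoints---is the same as in the paper, but the implementation differs in two noteworthy ways.

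First, you begin with the clean reduction $J_\delta(x,u)=\Phi(\langle x,\mu\rangle,u(\delta))$ and work entirely on $\R^2$; the paper never makes this reduction explicit and carries the $(x,u)$ dependence throughout. Second, and more substantively, your perturbation is a \emph{global} affine bridge $w_n=w^\varepsilon+(\text{affine})$ over the whole interval $[0,\delta]$, whereas the paper modifies the near-minimizer only on short subintervals $[0,\delta']$ and $[\delta-\delta',\delta]$ by linear interpolation, leaving the interior untouched. The paper's local surgery has the advantage that the cost of the modification is automatically confined to short intervals and can be bounded using only the $W^{1,2}$ regularity of the near-minimizer at hand; your global correction, by contrast, forces you to control the difference $I_{0,\delta}(\tilde w_n)-I_{0,\delta}(w_n)$ over the full interval, which is why you need the uniform a~priori bound $R=\sup_n\|w_n\|_\infty<\infty$.

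You correctly identify this bound as the crux of the lower-semicontinuity direction and supply a valid time-localization argument to obtain it. This is in fact a point the paper treats rather lightly: in its second inequality the paper invokes ``the same arguments'' to find a sequence $\{\delta'_n\}$ making the endpoint costs small, but the constants there implicitly depend on $|w_\eta(x_n,u_n)(\delta'_n)|$ and on the rate at which $I_{0,\delta'}(w_\eta(x_n,u_n))\to 0$, quantities whose uniformity in $n$ is not addressed. Your localization lemma closes that circle and would equally well justify the paper's endpoint-surgery argument. In that sense your route is slightly more self-contained, at the price of needing the extra a~priori estimate.
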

\begin{proof}
For every $x \in\,H$, $u \in\,C([0,T];\mathbb{R})$ and $\eta>0$, we denote by $w_\eta(x,u)$ a path in $C([0,T];\mathbb{R})$ such that 
\[w_\eta(x,u)(0)=\langle x,\mu\rangle,\ \ \ \ w_\eta(x,u)(\d)=u(\d),\ \ \ \ J_\d(x,u)\geq I_{0,\d}(w_\eta(x,u))-\frac \eta 4.\]
Moreover, for every $y \in\,H$, $v, w \in\,C([0,T];\mathbb{R})$ and $\d^\prime \in\,(0,\d)$, we denote by $\rho_{\d^\prime}(y,v,w)$ the path in $C([0,T];\mathbb{R})$ defined by
\[\ds{\rho_{\d^\prime}(y,v,w)(t)=}\begin{cases}
\ds{(\d^\prime-t)/\d^\prime\,\langle y,\mu\rangle+t/\d^\prime w(\d^\prime), } &  \ds{t \in\,[0,\d^\prime],}\\
\ds{w(t), } &  \ds{t \in\,[\d^\prime, \d-\d^\prime]}\\
\ds{(\d-t)/\d^\prime\,w(\d-\d^\prime)+(t-(\d-\d^\prime))/\d^\prime \,v(\d),}  &  \ds{t \in\,[\d-\d^\prime,\d].}
\end{cases}\]
Since $\rho_{\d^\prime}(y,v,w)$ and $w$ coincide in the interval $[\d^\prime,\d-\d^\prime]$, we have
\begin{equation}
\label{n1}
\begin{array}{l}
\ds{\le|I_{0,\d}(\rho_{\d^\prime}(y,v,w))-I_{0,\d}(w)\r|}\\
\vspace{.1mm}\\
\ds{\leq \le|I_{0,\d^\prime}(\rho_{\d^\prime}(y,v,w))\r|+\le|I_{\d-\d^\prime,\d^\prime}(\rho_{\d^\prime}(y,v,w))\r|+\le|I_{0,\d^\prime}(w)\r|+\le|I_{\d-\d^\prime,\d}(w)\r|.}
\end{array}
\end{equation}

Now, let us fix $x \in\,H$ , $u \in\,C([0,T];\mathbb{R})$ and $\eta>0$. Let $\{x_n\}_{n \geq 1}\subset H$ be a sequence weakly convergent to 
$x$ and let $\{u_n\}\subset C([0.T];\mathbb{R})$ be a sequence convergent to $u$. 
For every $n \in\,\mathbb{N}$ and $\d^\prime \in\,(0,\d)$, we have
\begin{equation}
\label{n2}
\begin{array}{l}
\ds{J_\d(x_n,u_n)\leq I_{0,\d}(\rho_{\d^\prime}(x_n,u_n,w_\eta(x,u)))}\\
\vspace{.1mm}\\
\ds{\leq I_{0,\d}(\rho_{\d^\prime}(x_n,u_n,w_\eta(x,u)))-I_{0,\d}(w_\eta(x,u))+J_\d(x,u)+\eta/4.}
\end{array}
\end{equation}
Since the sequences $\{x_n\}_{n\geq 1}$ and $\{u_n\}_{n\geq 1}$ are bounded and Hypothesis \ref{hyp:nondeg} holds true, we have
\[\begin{array}{l}
\ds{\le|I_{0,\d^\prime}(\rho_{\d^\prime}(x_n,u_n,w_\eta(x,u)))\r|
\leq  c\int_0^{\d^\prime}
\le(\frac{\le|w_\eta(x,u)(\d^\prime)-\langle x_n,\mu\rangle\r|_H}{\d^\prime}+1\r)^2\,dt}\\
\vspace{.1mm}\\
\ds{\leq  c\,\le(\frac{|w_\eta(x,u)(\d^\prime)-w_\eta(x,u)(0)|^2}{\d^\prime}+\frac{|\langle x_n-x,\mu\rangle|^2}{\d^\prime}+\d^\prime\r).}
\end{array}\]
Analogously,
\[\begin{array}{l}
\ds{\le|I_{\d-\d^\prime,\d}(\rho_{\d^\prime}(x_n,u_n,w_\eta(x,u)))\r|
\leq  c\int_{\d-\d^\prime}^\d
\le(\frac{\le|w_\eta(x,u)(\d^\prime)-u_n(\d)\r|_H}{\d^\prime}+1\r)^2\,dt}\\
\vspace{.1mm}\\
\ds{\leq  c\,\le(\frac{|w_\eta(x,u)(\d-\d^\prime)-w_\eta(x,u)(\d)|^2}{\d^\prime}+\frac{|u_n(\d)-u(\d)|^2}{\d^\prime}+\d^\prime\r).}
\end{array}\]
Therefore, as $w_\eta(x,u) \in\,W^{1,2}(0,\d)$, we can find $\d^\prime_1>0$ such that
\[\le|I_{0,\d^\prime}(\rho_{\d^\prime}(x_n,u_n,w_\eta(x,u)))\r|+\le|I_{\d-\d^\prime,\d}(\rho_{\d^\prime}(x_n,u_n,w_\eta(x,u)))\r|\leq \eta/4,\ \ \ \ \d^\prime\leq \d^\prime_1.\]
Moreover, as $I_{0,\d}(w_\eta(x,u))<\infty$, we can find $\d^\prime_2>0$ such that
\[\le|I_{0,\d^\prime}(w_\eta(x,u))\r|+\le|I_{\d-\d^\prime,\d}(w_\eta(x,u))\r|\leq \eta/4,\ \ \ \ \d^\prime\leq \d^\prime_2.\]
Thus, if we pick $\bar{\d}^\prime=\min (\d^\prime_1,\d^\prime_2)$, thanks to \eqref{n1} we conclude that
\[\limsup_{n\to\infty}\le|I_{0,\d}(\rho_{\d^\prime}(x_n,u_n,w_\eta(x,u)))-I_{0,\d}(w_\eta(x,u))\r|<\frac 34 \eta.\]
Thanks to \eqref{n2}, this implies that there exists $n^1_\eta \in\,\mathbb{N}$ such that
\begin{equation}
\label{n3}
J_\d(x_n,u_n)\leq J_\d(x,u)+\eta,\ \ \ \ n\geq n^1_\eta.
\end{equation}
Next, we want to prove that there exists $n^2_\eta \in\,\mathbb{N}$ such that
\begin{equation}
\label{n4}
J_\d(x_n,u_n)\geq J_\d(x,u)-\eta,\ \ \ \ n\geq n^2_\eta.
\end{equation}
The proof of the inequality above follows the same line of the proof of inequality \eqref{n3}. Actually, as in \eqref{n2} we have
\[\begin{array}{l}
\ds{J_\d(x,u)\leq \le|I_{0,\d}(\rho_{\d^\prime}(x,u,w_\eta(x_n,u_n)))-I_{0,\d}(w_\eta(x_n,u_n))\r|-J_{0,\d}(x_n,u_n)+\eta/4.}
\end{array}\]
Then, by using the same arguments used above, we can find a sequence $\{\d^\prime_n\}_{n\geq 1}\subset (0,\d)$ such that
 \[\limsup_{n\to\infty}\le|I_{0,\d}(\rho_{\d_n^\prime}(x,u,w_\eta(x_n,u_n)))-I_{0,\d}(w_\eta(x_n,u_n))\r|<\frac 34 \eta,\]
 and \eqref{n4} follows.

\end{proof}

The continuity above is strictly due to the fact that $\delta > 0$. If $\delta = 0$ then certainly the mapping $x \mapsto I^x(u)$ is not continuous, since $I^x(u)$ is finite only if $u(0) = x$. However, the lemma above easily implies the following weaker condition, which is also true in the $\delta = 0$ case.

\begin{lemma}
For every sequence  $\{x_n\}_{n\geq 1}\subset H$, weakly convergent to some $x$, and for every  $u \in \{\varphi \in C([\delta,T];\R): I^x_{\delta,T}(\varphi) \leq s\}$, there exists a sequence $\{u_n\}_{n\geq 1}$ such that $u_n \to u$ in $C([\d,T];H)$ and
\begin{equation}\label{eq:equiv_cond2}
\limsup_{n \to  \infty} I_{\d,T}^{x_n}(u_n) \leq s.
\end{equation}
\end{lemma}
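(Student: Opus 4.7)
The plan is to split into the cases $\delta>0$ and $\delta=0$. The positive $\delta$ case is essentially a direct corollary of the preceding continuity lemma, while $\delta=0$ requires an explicit interpolation construction near the initial time.

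\textbf{Case $\delta>0$.} I would simply take $u_n\equiv u$. Using the decomposition $I^{y}_{\delta,T}(v)=J_\delta(y,v)+I_{\delta,T}(v)$ from \eqref{eq:action_alt}, the statement reduces to showing $J_\delta(x_n,u)\to J_\delta(x,u)$. But this is exactly the content of Lemma \ref{n10}, since $x_n\rightharpoonup x$ in $H$ and $u_n\to u$ trivially in $C([\delta,T];\mathbb R)$. Then $\limsup_n I_{\delta,T}^{x_n}(u)=I_{\delta,T}^x(u)\le s$.

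\textbf{Case $\delta=0$.} Here $I_{0,T}^x(u)=I_{0,T}(u)\le s$ forces $u(0)=\langle x,\mu\rangle$ and $u\in W^{1,2}(0,T)$. Since in general $u(0)\ne \langle x_n,\mu\rangle$, we must modify $u$ near $t=0$. Pick a sequence $\delta_n\downarrow 0$ to be chosen, and set
\[
u_n(t):=\begin{cases}\langle x_n,\mu\rangle+\dfrac{t}{\delta_n}\bigl(u(\delta_n)-\langle x_n,\mu\rangle\bigr),& t\in[0,\delta_n],\\ u(t),& t\in[\delta_n,T].\end{cases}
\]
Then $u_n(0)=\langle x_n,\mu\rangle$ and $u_n\to u$ uniformly on $[0,T]$ provided $\delta_n\to 0$ and $\langle x_n-x,\mu\rangle\to 0$ (the latter holds by weak convergence since $\mu$ has a bounded density, so $m\in H$). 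Write
\[
I^{x_n}_{0,T}(u_n)=I_{0,\delta_n}(u_n)+I_{\delta_n,T}(u).
\]
The second piece tends to $I_{0,T}(u)\le s$ by absolute continuity of the integral defining $I_{0,T}(u)$. For the first piece, since $\mathcal H_{\bar\rho}$ is bounded below by the nondegeneracy hypothesis \eqref{nondeg}, and $u_n$ is uniformly bounded on $[0,\delta_n]$ for $n$ large (so $\bar F(t,u_n(t))$ is uniformly bounded), one obtains
\[
I_{0,\delta_n}(u_n)\le c\int_0^{\delta_n}\!\!\bigl(|u_n'(t)|^2+|\bar F(t,u_n(t))|^2\bigr)dt\le c\,\frac{|u(\delta_n)-\langle x_n,\mu\rangle|^2}{\delta_n}+c\,\delta_n.
\]
The $c\,\delta_n$ term vanishes. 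For the first summand, split
\[
|u(\delta_n)-\langle x_n,\mu\rangle|^2\le 2|u(\delta_n)-u(0)|^2+2|\langle x_n-x,\mu\rangle|^2,
\]
and bound $|u(\delta_n)-u(0)|^2\le \delta_n\int_0^{\delta_n}|u'(s)|^2\,ds$ by Cauchy--Schwarz, which uses $u\in W^{1,2}(0,T)$.

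\textbf{Main difficulty.} The one genuinely delicate point is the choice of $\delta_n$: it must go to zero (so that $I_{\delta_n,T}(u)\to I_{0,T}(u)$ and $u_n\to u$ uniformly), yet stay large enough relative to $|\langle x_n-x,\mu\rangle|$ to kill the ratio $|\langle x_n-x,\mu\rangle|^2/\delta_n$. A diagonal choice such as
\[
\delta_n:=\max\bigl(n^{-1},\,|\langle x_n-x,\mu\rangle|\bigr)
\]
resolves this: then $|\langle x_n-x,\mu\rangle|^2/\delta_n\le|\langle x_n-x,\mu\rangle|\to 0$, while the $W^{1,2}$-estimate gives $|u(\delta_n)-u(0)|^2/\delta_n\le\int_0^{\delta_n}|u'|^2\to 0$. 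Hence $I_{0,\delta_n}(u_n)\to 0$, and combining yields $\limsup_n I^{x_n}_{0,T}(u_n)\le I_{0,T}(u)\le s$, as required.
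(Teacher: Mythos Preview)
Your proof is correct and, for the case $\delta>0$, identical to the paper's: the paper simply takes $u_n\equiv u$ and invokes the continuity of $J_\delta$ from Lemma~\ref{n10}, exactly as you do.

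For $\delta=0$, the paper merely asserts that the statement ``is also true in the $\delta=0$ case'' without giving an argument; you actually supply one. Your linear-interpolation construction on $[0,\delta_n]$ is in the same spirit as the piecewise-linear paths $\rho_{\delta'}$ the paper uses in the proof of Lemma~\ref{n10}, and the diagonal choice $\delta_n=\max(n^{-1},|\langle x_n-x,\mu\rangle|)$ is exactly what is needed to balance the two competing requirements. So your contribution here goes slightly beyond what the paper writes out, but stays entirely within its toolkit.
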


When $\delta > 0$, this is trivially satisfied by the sequence $u_n = u$ by the previous lemma. This condition can be used along with the following lemma to prove that the conditions of Proposition \ref{prop:equiv} is satisfied in the model we are studying.

\begin{lemma}
For any $\delta > 0, T > 0, s \geq 0$ and $x \in H$, we define the set
\[\Phi^x_{\delta,T}(s):= \{\varphi \in C([\delta,T];\R): I^x_{\delta,T}(\varphi) \leq s\}.
\]
Then, for any bounded set $B \subset H$, we have
\begin{equation*}
\lim_{r \to 0^+} \sup_{x \in B} \lambda(\Phi_{\delta,T}^x(s),\Phi_{\delta,T}^x(s+r)) = 0.
\end{equation*}

\end{lemma}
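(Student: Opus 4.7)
Since $\Phi^x_{\delta,T}(s)\subset\Phi^x_{\delta,T}(s+r)$ for $r>0$, the Hausdorff distance is one-sided and equals $\sup_{u\in\Phi^x_{\delta,T}(s+r)} \mathrm{dist}_{C([\delta,T];\R)}(u,\Phi^x_{\delta,T}(s))$. My plan is to show that for any $u$ in the larger sublevel set one can construct a path $v$ in the smaller one with $\|u-v\|_{C([\delta,T];\R)}=O(\sqrt r)$, uniformly in $x\in B$ and in $u$. The natural construction exploits the quadratic scaling of the action $\frac12\int|\varphi|_V^2\,dt$ and the Lipschitz dependence of the skeleton \eqref{eq:control_ODE} on its control: if $u=\mathcal{G}_\delta(x,\varphi)$ with cost at most $s+r$, then the rescaled control $\theta\varphi$ with $\theta=\sqrt{s/(s+r)}$ has cost at most $s$, and the corresponding path will be close to $u$ by a factor $O(1-\theta)$.

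Concretely, given $u\in \Phi^x_{\delta,T}(s+r)$, I will first use the identity $\hat I^x_{\delta,T}=I^x_{\delta,T}$ established above to select $\varphi=(\varphi_H,\varphi_Z)\in L^2(0,T;V)$ with $\mathcal{G}_\delta(x,\varphi)=u$ and $\frac12\int_0^T|\varphi|_V^2\,dt\le s+2r$, where the extra $r$ absorbs possible non-attainment of the infimum. Setting $\theta_r:=\sqrt{s/(s+2r)}$, $\psi:=\theta_r\,\varphi$, and $v:=\mathcal{G}_\delta(x,\psi)$, one has $v\in\Phi^x_{\delta,T}(s)$ automatically. To control $u-v=u^{x,\varphi}|_{[\delta,T]}-u^{x,\psi}|_{[\delta,T]}$, I will subtract the two copies of \eqref{eq:control_ODE} and use the telescoping
\[\bar G(t,u^{x,\varphi})\sqrt{Q}\varphi_H-\theta_r\bar G(t,u^{x,\psi})\sqrt{Q}\varphi_H=\bigl[\bar G(t,u^{x,\varphi})-\bar G(t,u^{x,\psi})\bigr]\sqrt{Q}\varphi_H+(1-\theta_r)\bar G(t,u^{x,\psi})\sqrt{Q}\varphi_H,\]
together with the uniform Lipschitz continuity of $\bar F$, the bound \eqref{eq:Gbar_lip} for $\bar G$, and the boundedness of $\bar\Sigma$ from \eqref{eq:Neuman_bounded}. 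Gronwall's lemma and Cauchy-Schwarz will then yield
\[\bigl\|u^{x,\varphi}-u^{x,\psi}\bigr\|_{C([0,T];\R)}\le C(1-\theta_r)\sqrt{s+2r}\,\exp\bigl(C(T+\sqrt{T(s+2r)})\bigr).\]
Since $(1-\theta_r)\sqrt{s+2r}=\sqrt{s+2r}-\sqrt{s}\le\sqrt{2r}$, the right-hand side is $O(\sqrt r)$, and the claim follows by letting $r\to 0^+$.

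The main technical point is to verify that the constant $C$ above is uniform in $x\in B$ and in the choice of $\varphi$. This requires a priori $L^\infty$-bounds of the form $\|u^{x,\varphi}\|_{C([0,T];\R)}, \|u^{x,\psi}\|_{C([0,T];\R)} \le C(s,T)\bigl(1+\sup_{x\in B}|x|_H\bigr)$, needed in particular to bound the contribution $(1-\theta_r)\bar G(t,u^{x,\psi})\sqrt{Q}\varphi_H$ via \eqref{eq:Gbar_lip}. Because the limit equation \eqref{eq:control_ODE} is scalar, these bounds follow from a straightforward Gronwall argument applied directly to \eqref{eq:control_ODE}, using only $|\varphi|_{L^2(0,T;V)}^2\le 2(s+2r)$ and the linear growth of $\bar F$ and $\bar G$; this is where the finite-dimensional nature of the averaged equation is essential. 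Finally, the limiting case $s=0$, where $\theta_r=0$ and the scaled control vanishes, is handled uniformly with $s>0$ by the identity $(1-\theta_r)\sqrt{s+2r}=\sqrt{s+2r}-\sqrt{s}$, which tends to zero with $r$ in both cases.
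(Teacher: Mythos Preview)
Your proposal is correct and takes a genuinely different route from the paper.

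The paper's argument works directly with the Lagrangian form of $I^x_{\delta,T}$: given $u\in\Phi^x_{\delta,T}(s+r)$, it truncates $u$ in time at the stopping time $T^*:=\inf\{t:I_{t,T}(u)\le r\}$ and replaces $u$ on $[T^*,T]$ by the zero-cost ODE flow $u^{u(T^*)}(\cdot-T^*)$. This shaves off exactly $r$ units of action and a Gronwall estimate (using the uniform bound on $\mathcal{H}_{\bar\rho}$ along paths in $\Phi^x_{\delta,T}(s+r)$) controls $|u-z_u|$ by $c_{T,B}\sqrt{I_{T^*,T}(u)}=c_{T,B}\sqrt r$. A second case, when $I_{\delta,T}(u)\le r$ and the excess action sits on $[0,\delta]$, is handled by the same truncation idea applied to a near-minimiser $w$ of $J_\delta(x,u)$.

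Your approach instead exploits the control representation $I^x_{\delta,T}=\hat I^x_{\delta,T}$ and the quadratic homogeneity of $\frac12\int|\varphi|_V^2$: scaling $\varphi\mapsto\theta_r\varphi$ with $\theta_r=\sqrt{s/(s+2r)}$ lands the cost at $s$ in one step, with no case split between $[0,\delta]$ and $[\delta,T]$. The Gronwall estimate you run on the difference of the two copies of \eqref{eq:control_ODE} is essentially the same estimate the paper uses, and your a priori bound on $u^{x,\psi}$ is indeed uniform in $x\in B$ and in $\varphi$ with $\tfrac12\|\varphi\|^2\le s+2r$, as you note. The algebra $(1-\theta_r)\sqrt{s+2r}=\sqrt{s+2r}-\sqrt s\le\sqrt{2r}$ gives the same $O(\sqrt r)$ rate and handles $s=0$ uniformly. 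The trade-off is that your argument relies on the identity $\hat I=I$ (hence on Hypothesis~\ref{hyp:nondeg}), whereas the paper's truncation works directly on the action integral; in return, your argument is shorter and avoids the two-case analysis.
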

\begin{proof}
Fix an $\epsilon > 0$ and $s > 0$. We will show that there exist $r > 0$ small enough that for any $x \in B$ and $u \in \Phi^x_{\delta,T}(s+r)$, there exists $z_u \in \Phi^x_{\delta,T}(s)$ such that $|u-z_u|_{C([\delta,T];\R)} < \epsilon$.

Fix an $r > 0$. First we consider the case of $x \in B$ and $u \in \Phi^x_{\delta,T}(s+r)$ such that $I_{\delta,T}(u) > r$. For such a path $u$, we may consider the continuous path $z_u \in C([\delta,T];\R)$ defined by
\[
z_u(t)= \begin{cases}
u(t), & \text{ if } t \in [\delta,T^*], \\
u^{u(T^*)}(t-T^*), & \text{ if } t \in [T^*,T],
\end{cases}
\]
where 
\[T^*=T^*(u,r):=\inf \{t \in [\delta,T]:I_{t,T}(u) \leq r\}.
\]
Hence, $z_u \in \Phi_{\delta,T}^x(s)$.
 Moreover, since $W^{1,2}([\delta,T]) \hookrightarrow C([\delta,T];\R)$, it is easy to see that
\[\sup_{x \in B} \sup_{u \in \Phi^x_{\d,T}(s+r)}|u|_{C([\delta,T];\R)} < \infty.
\]
Thanks to the Lipschitz condition on $g$, this implies that
\[\sup_{x \in B} \sup_{u \in \Phi^x_{\d,T}(s+r)}|\mathcal{H}_{\bar{\rho}}(u)|_{C([\delta,T];\R)} < \infty.
\]
Next, for any $t \in [T^*,T]$ we have that
\[\begin{array}{l}
|u(t)-z_u(t)|^2 \leq  \ds \Big( \int_{T_*}^t |u^\prime(s)-\bar{F}(s,u(s))|ds + \int_{T_*}^t |\bar{F}(s,u(s))-\bar{F}(s,z_u(s))|ds \Big)^2\vspace{.1in} \\ \ds \leq c_{B} \,(t-T^*)\le(I_{T^*,t}(u) +  \int_{T_*}^t |u(s)-z_u(s)|^2\,ds \r),
\end{array}\]
so that, thanks to the
Gronwall lemma, 
\[|u(t)-z_u(t)|^2 \leq c_{T,B}\,I_{T_\star, T}(u).\]
Now, if we fix $\ds r < (\e\,c_{T,B}^{-1})^{1/2}$, then we have $|u-z_u|_{C([\delta,T];\R)} < \epsilon$. Since the constant $c_{T,B}$ is independent of $x$, this proves the result.

Next, we consider the case where $u \in \Phi^{x}_{\delta,T}(s+r)$, but $I_{\delta,T}(u) \leq r$. Let $w \in C([0,\delta];\R)$ be a path such that $w(0) = \langle x,\mu \rangle$, $w(\delta) = u(\delta)$ and $I_{0,\delta}(w) \leq J_{0,\delta}(x,u)+r$. Then similar to before we may define the path $z_u \in C([\delta,T];\R)$ by
\[
z_u(t)= u^{w(T^*)}(t-T^*), \ \ \ \ t \in [\delta,T],
\]
where
\[T^* =T^*(w,r):=\inf \{t \in [0,\delta]:I_{t,\delta}(w) \leq 2r\}.
\]
This implies that
\[I^x_{\delta,T}(z_u) = J_{0,\delta}(x,z_u) \leq I_{0,T^*}(w)  = I_{0,\delta}(w)-I_{T^*,\delta}(w) \leq J_{0,\delta}(x,u)-r \leq I_{\delta,T}^x(u)-r.
\]
Therefore, $z_u \in \Phi^x_{\delta,T}(s)$. Finally, if we consider the path $\tilde{u}(t):= w(t)\mathbb{I}_{[T^*,\delta]}(t) + u(t) \mathbb{I}_{[\delta,T]}(t)$, then $I_{T^*,T}(\tilde{u}) \leq 3r$. Thus by the same calculation as before we obtain 
\[|u-z_u|_C([\delta,T];\R) \leq |\tilde{u}-u^{w(T^*)}(\cdot - T^*)|_{C([T^*,T];\R)} \leq c_{T, B}\,I_{T^*,T}(\tilde{u}),
\]
which completes the proof upon taking $r$ small enough.
\end{proof}

\begin{lemma}
Suppose $x_n \rightharpoonup x$ in $H$. Then for any  $\delta, T > 0$ and $s \geq 0$, we have
\begin{equation*}
\lim_{n \to \infty} \sup_{u \in \Phi_{\delta,T}^x(s)} \mathrm{dist}(u,\Phi_{\delta,T}^{x_n}(s)) = 0,
\end{equation*}
and
\begin{equation*}
\lim_{n \to \infty} \sup_{u \in \Phi_{\delta,T}^{x_n}(s)} \mathrm{dist}(u,\Phi_{\delta,T}^{x}(s)) = 0.
\end{equation*}
In particular, the requirements of Proposition \ref{prop:equiv} are satisfied.
\end{lemma}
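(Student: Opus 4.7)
The idea is to reduce the Hausdorff-type convergence to the equicontinuity statement of the preceding lemma, by upgrading the weak continuity of $J_\delta$ from Lemma \ref{n10} to a uniform-in-$u$ statement on a single compact set containing all the level sets involved. Since any weakly convergent sequence is norm bounded, I fix a closed bounded set $B \subset H$ containing $x$ and all $x_n$, and use Theorem \ref{theor:laplace} to see that $\Lambda_{s,B} = \bigcup_{y \in B} \Phi^y_{\delta,T}(s)$ is compact in $C([\delta,T];H_\mu)$, hence in $C([\delta,T];\R)$ via the identification of $\Psi_{\delta,T}$ with spatially constant paths.

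The key uniformization step is to show that $J_\delta(x_n, u) \to J_\delta(x, u)$ uniformly for $u \in \Lambda_{s,B}$. I would argue by contradiction: if there were $\varepsilon_0 > 0$, a subsequence $n_k$, and $u_{n_k} \in \Lambda_{s,B}$ with $|J_\delta(x_{n_k}, u_{n_k}) - J_\delta(x, u_{n_k})| \geq \varepsilon_0$, then by compactness of $\Lambda_{s,B}$ I could pass to a further subsequence with $u_{n_k} \to u_*$ in $C([\delta,T];\R)$. The joint continuity of $J_\delta$ on $H$ (endowed with the weak topology) times $C([0,T];\R)$ from Lemma \ref{n10} forces both $J_\delta(x_{n_k}, u_{n_k})$ and $J_\delta(x, u_{n_k})$ to converge to $J_\delta(x, u_*)$, a contradiction.

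Given $\eta > 0$, I then choose $r_\eta > 0$ from the preceding lemma so that $\lambda(\Phi^y_{\delta,T}(s), \Phi^y_{\delta,T}(s+r_\eta)) < \eta$ uniformly in $y \in B$, and $N$ large enough so that $|J_\delta(x_n, u) - J_\delta(x, u)| < r_\eta$ uniformly in $u \in \Lambda_{s,B}$ for $n \geq N$. Using the additive decomposition $I^y_{\delta,T}(u) = J_\delta(y, u) + I_{\delta,T}(u)$ from \eqref{eq:action_alt}, I note that for $u \in \Phi^x_{\delta,T}(s)$ this gives $I^{x_n}_{\delta,T}(u) \leq s + r_\eta$, so $u \in \Phi^{x_n}_{\delta,T}(s + r_\eta)$, and equicontinuity applied at $y = x_n$ produces some $z \in \Phi^{x_n}_{\delta,T}(s)$ within $\eta$ of $u$; taking the supremum over $u$ yields the first limit. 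Symmetrically, for $u \in \Phi^{x_n}_{\delta,T}(s) \subset \Lambda_{s,B}$, the same uniform estimate gives $I^x_{\delta,T}(u) \leq s + r_\eta$, hence $u \in \Phi^x_{\delta,T}(s + r_\eta)$, and equicontinuity at $y = x$ gives an element of $\Phi^x_{\delta,T}(s)$ within $\eta$ of $u$, establishing the second limit.

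The main obstacle is precisely the uniform-in-$u$ upgrade of Lemma \ref{n10} that is needed in the second limit: since the level sets $\Phi^{x_n}_{\delta,T}(s)$ vary with $n$, pointwise continuity of $J_\delta$ is not enough, and one genuinely needs all of them to sit inside a single compact set — which is exactly what the uniform compactness of $\Lambda_{s,B}$ from Theorem \ref{theor:laplace} provides, and which is why the weak topology on $H$ (rather than the strong topology) is the right setting. Once both Hausdorff-metric limits are in hand, hypothesis \eqref{eq:hausdorff_cond} of Proposition \ref{prop:equiv} is verified and, together with the uniform Laplace principle already proved, completes the proof of Theorem \ref{theor:main}.
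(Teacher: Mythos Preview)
Your argument is correct and uses the same ingredients as the paper (compactness of $\Lambda_{s,B}$, Lemma \ref{n10}, and the equicontinuity lemma for $s\mapsto\Phi^y_{\delta,T}(s)$), but it organizes them differently. The paper handles the two limits asymmetrically: for the first it takes a sequence $u_n\in\Phi^x_{\delta,T}(s)$, passes to a convergent subsequence $u_n\to u$, invokes the consequence \eqref{eq:equiv_cond2} of Lemma \ref{n10} to produce $z_n\to u$ with $\limsup I^{x_n}_{\delta,T}(z_n)\le s$, and then applies the equicontinuity lemma at $y=x_n$; for the second it takes $u_n\in\Phi^{x_n}_{\delta,T}(s)$, extracts a limit $u$ via compactness of $\Lambda_{s,B}$, and uses the continuity of $J_\delta$ together with lower semicontinuity of $I_{\delta,T}$ to conclude directly that $u\in\Phi^x_{\delta,T}(s)$, without invoking the equicontinuity lemma at all. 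Your route instead promotes Lemma \ref{n10} to uniform convergence of $J_\delta(x_n,\cdot)\to J_\delta(x,\cdot)$ on the single compact $\Lambda_{s,B}$, which then gives the symmetric inclusions $\Phi^x_{\delta,T}(s)\subset\Phi^{x_n}_{\delta,T}(s+r_\eta)$ and $\Phi^{x_n}_{\delta,T}(s)\subset\Phi^{x}_{\delta,T}(s+r_\eta)$ and lets you treat both limits identically via the equicontinuity lemma. The gain of your approach is a cleaner, symmetric structure; the paper's second-limit argument is marginally shorter since it avoids the equicontinuity lemma there.
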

\begin{proof}
For fixed $u \in C([0,T];\R)$, the mapping $x \mapsto I^x_{\delta,T}(u)$ is lower semi-continuous. Condition (i) of Proposition \ref{prop:equiv} then follows from Condition (i) in Theorem \ref{theor:laplace} (see the proof of Theorem 5 in \cite{budhiraja2008}).

To show the first limit, it suffices to prove that for any $\{u_n\}_{n=1}^\infty \subset \Phi^x_{\d,T}(s)$ we have 
\begin{equation}
\label{f100}
 \liminf_{n \to \infty} \mathrm{dist}(u_n,\Phi^{x_n}_{\d,T}(s)) = 0.
\end{equation}
Since $I^x_{\d,T}$ is a good rate function, we may assume by taking a subsequence, if necessary, that $u_n \to u \in \Phi^x_{\d,T}(s)$. By \eqref{eq:equiv_cond2}, we may also find a sequence $\{z_n\}_{n=1}^\infty$ such that $z_n \to u$ and 
\[\limsup_{n \to \infty} I^{x_n}_{\d,T}(z_n) \leq s.
\]
Then, for any $r > 0$ we have that
\[\begin{array}{l}
\ds{\mathrm{dist}(u_n,\Phi^{x_n}_{\d,T}(s)) \leq |u_n-z_n|_{C([\delta,T];\R)} + \mathrm{dist}(z_n,\Phi^{x_n}_{\d,T}(s+r)) + \mathrm{dist}(\Phi^{x_n}_{\d,T}(s+ r),\Phi^{x_n}_{\d,T}(s))}\\
\vs
\ds{=|u_n-z_n|_{C([\delta,T];\R)}+\mathrm{dist}(\Phi^{x_n}_{\d,T}(s+ r),\Phi^{x_n}_{\d,T}(s)).}
\end{array}
\]
Therefore, due to the previous lemma, for every $\e>0$ we can find $r_\e>0$ such that 
\[\mathrm{dist}(u_n,\Phi^{x_n}_{\d,T}(s)) \leq |u_n-z_n|_{C([\delta,T];\R)} + \e,\ \ \ \ n\geq 0,\]
and this implies \eqref{f100}.

To show the second limit, it suffices to prove that for any $\{u_n\}_{n=1}^\infty \subset C([\delta,T];\R)$ such that $u_n \in \Phi_{\delta,T}^{x_n}(s)$, we have
\[ \liminf_{n \to \infty} \mathrm{dist}(u_n,\Phi^{x}_{\delta,T}(s)) = 0.
\]
By condition (i), we may assume, by taking a subsequence if necessary, that $u_n \to u$. Then, thanks to Lemma \ref{n10}, we obtain
\[
\liminf_{n \to \infty} I_{\d,T}^{x_n}(u_n)  \geq \liminf_{n \to \infty} J_{0,\d}(x_n,u_n) + \liminf_{n \to \infty} I_{\delta,T}(u_n)  \geq J_{0,\d}(x,u) + I_{\delta,T}(u) = I^x_{\delta,T}(u).
\]
In particular, this implies that $I_{\delta,T}^x(u) \leq s$ so that $u \in \Phi^x_{\delta,T}(s))$. Therefore,
\[\mathrm{dist}_{C([\delta,T];\R)}(u_n,\Phi^{x}_{\delta,T}(s)) \leq |u-u_n|_{C([\delta,T];\R)},
\]
which concludes the proof.
\end{proof}

\begin{remark}
In Proposition \ref{prop:conv}, we have proven that $u_\epsilon^{x_\epsilon,\varphi_\epsilon}$ converges to $u^{x,\varphi}$ in $C([\delta,T];H_\mu)$, $\mathbb{P}$-a.s., for every $0<\d<T$. The reason we do not have convergence (and hence a large deviation principle) in $C([0,T];H_\mu)$ is because $e^{t\frac A\e}x$ does not converge to $\langle x, \mu \rangle$ uniformly on $t \in [0,T]$ as $\epsilon \to 0$. On the other hand, for any $k \geq 1$,  due to $\eqref{eq:longterm}$ we have
\begin{equation*}
\int_0^T |e^{t\frac A\e} x - \langle x, \mu \rangle |_{H_\mu}^k dt \leq c \int_0^T e^{-t\gamma k/\epsilon} |x|_{H_\mu}^k dt \leq c\, \epsilon\, |x|_{H_\mu}^k.
\end{equation*}
This implies that $u_\epsilon^{x_\epsilon,\varphi_\epsilon}$ converges to $u^{x,\varphi}$, as $\epsilon \to 0$, in the space $L^p(\Omega;L^k(0,T;H_\mu))$ for any $p,k \geq 1$. Consequently, the family $\{\mathcal{L}(u_\epsilon^x) \}_{\epsilon \in (0,1]}$ satisfies a large deviation principle in $L^k(0,T;H_\mu)$.
\begin{flushright}
$\Box$
\end{flushright}

\end{remark}

\begin{remark}\label{rem:equiv_LDP}
The action functional $I_{\delta,T}^x$ is  the same action functional that governs the large deviation principle in $C([\delta,T];\R)$ satisfied by the family $\{\mathcal{L}(v_\epsilon^x)\}_{\epsilon > 0}$, where $v_\epsilon^x$ is the solution to the one-dimensional SDE,
\begin{equation}\label{eq:SDE_samelaw}
d v(t) = \bar{F}(v(t))dt + \sqrt{\gamma(\epsilon)\,\mathcal{H}(t,v(t))}\,d\beta(t),\ \ \ \ \ v(0) = \langle x, \mu \rangle.
\end{equation}
The law of the solutions to $\eqref{eq:SDE_samelaw}$ is equal to the law of the solutions to the SDE,
\begin{equation}\label{eq:SDE_onelimit}
du(t) = \bar{F}(t,u(t))dt + \frac{\sqrt{\gamma(\epsilon)}}{1+\bar{\rho}}\,\le( \bar{G}(t,u(t)) dw^Q(t) +\bar{\rho}\,\bar{\Sigma}(t) dw^B(t)\r), \ \ \ \ 
u(0) = \langle x, \mu \rangle.
\end{equation} 
Now, in view of equation $\eqref{eq:SDE_limit}$, we see that $\eqref{eq:SDE_onelimit}$ is precisely the limiting equation of $\eqref{eq:main}$ if the coefficients $\a(\epsilon)$ and $\beta(\e)$ are held fixed, while only the $\epsilon$ terms with the diffusion $\mathcal{A}$ are taken to $0$. Therefore, the large deviation principle would not be affected if we were to take the spatial averaging limit to completion before allowing the noises to decay. 
\begin{flushright}
$\Box$
\end{flushright}
\end{remark}

\section{Applications to the Exit Problem}\label{sec:exit}
In this section we consider the problem of the exit of the process $u_\epsilon^x$, the solution of equation $\eqref{eq:main}$, from a bounded domain $D \subset H_\mu$. With this in mind, we make the following assumptions on the domain $D$ and the coefficients $f,g$ and $\sigma$.

\begin{hyp}\label{hyp:exit}
\begin{enumerate}
\item[(i)]  The coefficients $f,g$ and $\sigma$ are all independent of $t$. In addition,
\begin{equation*}
\sup_{(\xi,r) \in \mathcal{O} \times \R} |g(\xi,r)| < \infty.
\end{equation*}
\item[(ii)] For any $x \in \bar{D}$, the unique solution $u^x$ of the one-dimensional ODE
\begin{equation*}
 \frac{du}{dt} = \bar{F}(u(t)), \ \ \ \ u(0) = \langle x, \mu \rangle,\end{equation*}
 satisfies  $u^x(t) \in \bar{D}$, for any $t \geq 0$. Moreover, 
 for every $c_1, c_2>0$ there exists $T=T(c_1,c_2)>0$ such that
 \[|x|_{H_\mu}\leq c_2 \Longrightarrow |u^x(t)|_{H_\mu}\leq c_1,\ \ \ \ t\geq T.\]

 \item[(iii)] The domain $D \subset H_\mu$ is an open, bounded, connected set that contains $x = 0$. In addition, $D$ is invariant under the semigroup $e^{tA}$ and $\langle x, \mu \rangle \in D$, for each $x \in D$.
\end{enumerate}
\end{hyp}
\begin{remark} 
 The invariance of $D$ under $e^{tA}$ will be necessary in order to prove a lower bound on the exit time of the process $u_\epsilon^x$ from the domain $D$. This is because when $\epsilon$ is small, equation $\eqref{eq:main}$ behaves likes the heat equation,
\begin{equation*}
\frac{\partial u}{\partial t} = \frac{1}{\epsilon}\mathcal{A}u,
\end{equation*}
for $t$ on the order of $\epsilon$. In fact, if $D$ is not invariant under the semigroup $e^{tA}$, then for some $x \in D$ the process $u_\epsilon^x$ will immediately exit the domain, as $\epsilon \to 0$. 
\begin{flushright}
$\Box$
\end{flushright}

\end{remark}

\begin{lemma}
Assume  that $\mathcal{A}$ is a divergence type operator and pick any function $g:\R \to \R$ that is of class $C^2$ and  convex and has   quadratic growth at infinity. 
For every $r \in\,\mathbb{R}$, we define 
\[\mathcal{D}_g(r):=\le\{\,x \in\,H\,:\,G(x)<r\,\r\},\]
where 
\begin{equation*}
G(h) = \int_\mathcal{O} g(h(\xi)) d \xi,\ \ \ \ h \in\,H.
\end{equation*}
Then, there exists $\bar{r} \in\,\mathbb{R}$ such that the domain $\mathcal{D}_g(r)$ satisfies Condition (iii) in Hypothesis \ref{hyp:exit}, for every $r> \bar{r}$.

\end{lemma}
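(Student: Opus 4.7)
The plan is to verify the four requirements of Condition (iii) in Hypothesis \ref{hyp:exit} (openness, boundedness, connectedness, containment of $0$) together with the invariance under $e^{tA}$ and under the projection $x\mapsto \langle x,\mu\rangle$, working throughout with the functional $G:H\to \R$ and exploiting the convexity of $g$ together with the fact that, for a divergence type $\mathcal{A}$, the Markov kernel $k_t$ is symmetric and doubly stochastic, and $\mu = |\mathcal{O}|^{-1}\lambda_d$.

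First I would collect some basic properties of $G$. Since $g$ is $C^2$, convex, and has quadratic growth at infinity, one has $c_1 s^2 - c \leq g(s)\leq c_2(1+s^2)$ and $|g'(s)|\leq c_3(1+|s|)$, so that $G$ is well-defined on $H$ and the pointwise bound
\[
|g(a)-g(b)|\leq c_3\,(1+|a|+|b|)\,|a-b|,
\]
together with Cauchy–Schwarz, shows that $G:H\to \R$ is continuous. The identity $H=H_\mu$ (with equivalent norms, since $\mu$ is a constant multiple of Lebesgue measure) then gives that $\mathcal{D}_g(r)=G^{-1}(-\infty,r)$ is open in $H_\mu$. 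Convexity of $g$ implies convexity of $G$, so $\mathcal{D}_g(r)$ is convex, in particular connected. Coercivity from the lower quadratic bound gives $|h|_H^2\leq (r+c\,|\mathcal{O}|)/c_1$ for $h\in \mathcal{D}_g(r)$, so $\mathcal{D}_g(r)$ is bounded. Choosing $\bar{r}:=g(0)\,|\mathcal{O}|$ ensures $0\in \mathcal{D}_g(r)$ whenever $r>\bar{r}$.

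Next I would handle the invariance under $e^{tA}$, which I expect to be the main (though still short) step. Because $\mathcal{A}$ is a divergence type operator endowed with the conormal boundary condition, $A$ is self-adjoint on $H$ and the Lebesgue measure is invariant under $e^{tA}$; hence the associated kernel $k_t$ is symmetric and satisfies
\[
\int_{\mathcal{O}} k_t(\xi,\eta)\,d\eta = \int_{\mathcal{O}} k_t(\xi,\eta)\,d\xi =1,\qquad t>0.
\]
For any $h\in \mathcal{D}_g(r)$, Jensen's inequality applied pointwise in $\xi$ to the probability kernel $k_t(\xi,\cdot)$ gives
\[
g\bigl((e^{tA}h)(\xi)\bigr)=g\Bigl(\int_{\mathcal{O}} k_t(\xi,\eta)h(\eta)\,d\eta\Bigr)\leq \int_{\mathcal{O}} k_t(\xi,\eta)g(h(\eta))\,d\eta,
\]
and integrating in $\xi$ and using the second normalization of the kernel yields
\[
G(e^{tA}h)\leq \int_{\mathcal{O}}\Bigl(\int_{\mathcal{O}}k_t(\xi,\eta)\,d\xi\Bigr) g(h(\eta))\,d\eta = G(h)<r.
\]
Thus $e^{tA}h\in \mathcal{D}_g(r)$ for every $t\geq 0$.

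Finally, for the condition $\langle x,\mu\rangle\in \mathcal{D}_g(r)$ whenever $x\in \mathcal{D}_g(r)$, I would use that in the divergence case $\mu = |\mathcal{O}|^{-1}\lambda_d$, so $\langle x,\mu\rangle$ is the constant $|\mathcal{O}|^{-1}\int_{\mathcal{O}} x(\xi)\,d\xi$. A direct application of Jensen's inequality to the convex function $g$ gives
\[
G(\langle x,\mu\rangle)=|\mathcal{O}|\,g\Bigl(|\mathcal{O}|^{-1}\!\int_{\mathcal{O}} x(\xi)\,d\xi\Bigr)\leq \int_{\mathcal{O}} g(x(\xi))\,d\xi = G(x)<r,
\]
which is the desired conclusion. (Alternatively this follows from the invariance step by letting $t\to\infty$ and using Hypothesis \ref{hyp:longterm}.) Taking $\bar{r}=g(0)|\mathcal{O}|$, all three clauses of Condition (iii) hold for every $r>\bar{r}$, completing the proof.
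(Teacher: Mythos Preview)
Your proof is correct and complete. The one substantive difference from the paper concerns the invariance of $\mathcal{D}_g(r)$ under $e^{tA}$: the paper differentiates $t\mapsto G(e^{tA}x)$, writes $\frac{d}{dt}G(u(t))=\langle g'(u(t)),\mathcal{A}u(t)\rangle_H$, integrates by parts, and uses convexity of $g$ together with the ellipticity bound \eqref{f1} to conclude that this derivative is nonpositive. You instead exploit that, for a divergence type operator, the heat kernel $k_t$ is nonnegative and doubly stochastic, and apply Jensen's inequality directly to obtain $G(e^{tA}h)\leq G(h)$. Your route is slightly more elementary in that it avoids any differentiation or regularity considerations for $u(t)$, at the cost of invoking the kernel normalizations (which are available here via \eqref{eq:sem_kernel} and the invariance of Lebesgue measure). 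The paper's route, by contrast, makes the role of the ellipticity condition \eqref{f1} explicit. For the remaining items (openness, boundedness, convexity, $0\in\mathcal{D}_g(r)$, and $\langle x,\mu\rangle\in\mathcal{D}_g(r)$) the two arguments coincide, though you are a bit more explicit about openness via continuity of $G$.
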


\begin{proof}
First of all, since $g$ has no more that quadratic growth at infinity, the mapping $G:H\to \mathbb{R}$ is well defined. It is differentiable and $G^\prime(h)=g^\prime \circ h$, for every $h \in\,H$. Moreover,  since $\mathcal{A} $ is a divergence type operator, $H = H_\mu$. 

The convexity and the quadratic growth at infinity of $g$ imply, respectively,  that $\mathcal{D}_g(r)$ is convex and bounded, for every $r \in\,\mathbb{R}$. Moreover, $0 \in\,\mathcal{D}_g(r)$, for every $r>g(0)\,|\mathcal{O}|=:\bar{r}.$

Now, we show that $\mathcal{D}_g(r)$ is invariant under the semigroup $e^{tA}$. Actually, if $x \in H$ and $u(t):= e^{tA}x$, by differentiating  and integrating by parts we have
\[\begin{array}{l}
\ds{\frac{d}{dt}\, G(u(t)) = \langle G'(u(t)),\partial_t u(t) \rangle_H = \langle g'(u(t)), \mathcal{A} u(t) \rangle_H}\\
\vspace{.1mm}\\
\ds{ =-\int_\mathcal{O} g''(u(t,\xi)) \langle a(\xi)\nabla u(t,\xi),\nabla u(t,\xi)\rangle\, d\xi\leq 0,}
\end{array}\]
last inequality following from the fact that $g$ is convex and from \eqref{f1}. 
This means that the mapping $t\mapsto G(u(t))$ is non-increasing, so that
\[x \in\,\mathcal{D}_g(r)\Longrightarrow G(e^{tA}x)\leq G(x)<r,\ \ \ t\geq 0.\]

Finally, we show that if $x \in\,\mathcal{D}_g(r)$, then $\langle x,\mu\rangle \in\,\mathcal{D}_g(r)$. We have
\[G(\langle x,\mu\rangle)=\int_{\mathcal{O}}g(\langle x,\mu\rangle)\,d\xi=|\mathcal{O}|\,g(\langle x,\mu\rangle)\leq \int_{\mathcal{O}}g(x(\xi))\,d\xi=G(x)<r.\]

\end{proof}

We have seen that for every $x \in H$ and $\delta > 0$, the family $\{\mathcal{L}(u_\epsilon^x)\}_{\epsilon > 0}$ satisfies a uniform large deviation principle in $C([\delta,T];H_\mu)$ with action functional $I_{\delta,T}^x$. Moreover, if $x$ is constant then $\{\mathcal{L}(u_\epsilon^x)\}_{\epsilon > 0}$ satisfies a large deviation principle in $C([0,T];H_\mu)$ with action functional $I_{0,T}^x$. On the basis of this, we define the quasipotential $V:H_\mu \to [0,+\infty]$, by
\begin{equation*}
V(y):=\inf\{\,I_{0,T}^0(u)\ :\ u \in C([0,T];H_\mu),\  u(T) = y,\ T>0\,\}.
\end{equation*}
Recalling that $I_{0,T}^x$ is finite only if $u \in C([0,T];\R)$, it follows that
\begin{equation*}
V(y) < +\infty \implies y \text{ is constant}.
\end{equation*} 
Moreover, since we assume that $D$ contains a ball around $0$, it follows that both $D$ and $\partial D$ will contain some constant $y \in H_\mu$. In particular, there will exist paths starting at $0$ and ending at $z\in \partial D$ that only travel along the subspace $\{y \in H_\mu:y \text{ is constant} \}$. These paths will have finite values of the action functional, so that
\begin{equation}\label{eq:Vbar}
\bar{V}(D):= \inf_{y \in \partial D} V(y) < + \infty.
\end{equation}
In addition, due to Condition (ii) of Hypothesis \ref{hyp:exit}, the intersection of $D$ and the subspace $\R \subset H_\mu$ is precisely an open interval containing $0$. Therefore, if we denote the endpoints of the interval $\R \cap D$ by $y_1$ and $y_2$, then $\bar{V}(D) = \min(V(y_1),V(y_2))$.
\begin{remark}
Suppose $g \equiv 1$, so that the noise is additive. As discussed in Remark \ref{rem:equiv_LDP}, $I_{0,T}^x$ is the action functional for the large deviation principle satisfied by the family $\{\mathcal{L}(v_\epsilon^x)\}$, where $v_\epsilon^x$ is the solution of
\begin{equation*}
d v(t) = \bar{F}(v(t))dt + \sqrt{\gamma(\epsilon)\,\mathcal{H}_{\bar{\rho}}}\, d\beta(t),\ \ \ \  v(0) = \langle x, \mu \rangle,
\end{equation*}
with 
\[\mathcal{H}_{\bar{\rho}}=\frac 1{(1+\bar{\rho})^2}\le( |\sqrt{Q}m|_H^2 + \bar{\rho}^2\,\delta_0^2 |\sqrt{B}[\Sigma N_{\delta_0}^*m]|_Z^2\r).\] Therefore, due to classical results (see \cite{freidlin1998random}), we will have the explicit formula,
\begin{equation*}
V(y) = -\frac{2}{\mathcal{H}_{\bar{\rho}}}\, \int_0^y \bar{F}(\sigma)d\sigma.
\end{equation*}
In the case that the noise is multiplicative, there is no such explicit representation of the quasipotential, but the exit results we discuss below will still hold.
\begin{flushright}
$\Box$
\end{flushright}

\end{remark}

Our goal in this section is to a prove Freidlin-Wentzell type estimates on the exit time of $u_\epsilon^x$ from the domain $D$. With this in mind, we define the stopping times,
\begin{equation*}
\tau_\epsilon^x := \inf \{t \geq 0: u_\epsilon^x(t) \in \partial D \}.
\end{equation*}
The main result is the following.
\begin{theorem}\label{theor:exit}
Assume that all  Hypotheses \ref{hyp:longterm} to \ref{hyp:exit} are satisfied. Then for any $x \in D$, we have
\begin{equation*}
\lim_{\epsilon \to 0} \epsilon \log \mathbb{E} \tau^x_\epsilon = \bar{V}(D).
\end{equation*}
\end{theorem}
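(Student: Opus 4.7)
The plan is to adapt the classical Freidlin--Wentzell argument for exit times (see \cite{freidlin1998random,dembo2009large}) to our infinite-dimensional multi-scale setting, using the uniform large deviation principle of Theorem \ref{theor:main} (with speed $\gamma(\epsilon)=(\alpha(\epsilon)+\beta(\epsilon))^2$) together with the deterministic stability in Hypothesis \ref{hyp:exit}(ii). As usual, the proof splits into a matching logarithmic upper and lower bound for $\mathbb{E}\tau^x_\epsilon$, against the speed $\gamma(\epsilon)$.

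For the upper bound, given $\eta>0$, by definition of $\bar{V}(D)$ and the fact that finite-cost paths must be spatially constant, there exist $T^*>0$ and an absolutely continuous $\phi^*\colon[0,T^*]\to\mathbb{R}$ with $\phi^*(0)=0$, $\phi^*(T^*)\notin\bar{D}$ and $I_{0,T^*}(\phi^*)\leq \bar{V}(D)+\eta/2$; moreover $\phi^*$ can be thickened to a $C([0,T^*];H_\mu)$-tube of positive radius whose endpoints all lie outside $D$. Hypothesis \ref{hyp:exit}(ii), combined with the averaging theorem (Theorem~4.1 of \cite{cerrai2011}), provides $T_0>0$ such that $u^y_\epsilon$ lies within any prescribed $H_\mu$-ball around $0$ at time $T_0$ with probability tending to $1$ as $\epsilon\to 0$, uniformly in $y$ in any bounded subset of $H$. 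The lower bound in the uniform LDP of Theorem \ref{theor:main}(ii) (applicable because at time $T_0$ the process is essentially spatially constant) then yields a lower bound $\exp\!\bigl(-(\bar{V}(D)+\eta)/\gamma(\epsilon)\bigr)$ for the probability of exit during $[T_0,T_0+T^*]$, uniformly over starting points. A geometric renewal bound over blocks of length $T_0+T^*$, together with the Markov property, delivers
\[
\limsup_{\epsilon\to 0}\gamma(\epsilon)\log\mathbb{E}\tau^x_\epsilon\leq\bar{V}(D)+\eta.
\]

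For the lower bound, fix $\eta>0$, pick $\rho>0$ so small that $B_{2\rho}\subset D$, and choose a sufficiently large horizon $T_1>0$. Using the lower semicontinuity of the action functional in the initial datum (in the weak topology of $H$, as shown in Section~\ref{sec:action}) and continuity of the quasi-potential $V$ at $0$, every path starting in $B_\rho$ which exits $D$ before time $T_1$ has cost at least $\bar{V}(D)-\eta/2$; the upper bound of the uniform LDP then gives
\[
\sup_{|y|_{H_\mu}\leq\rho}\mathbb{P}(\tau^y_\epsilon\leq T_1)\leq \exp\!\bigl(-(\bar{V}(D)-\eta)/\gamma(\epsilon)\bigr)
\]
for small $\epsilon$. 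With stopping times $\sigma_0<\tau_0<\sigma_1<\tau_1<\cdots$, where $\sigma_0$ is the first entry of $u^x_\epsilon$ into $B_\rho$, $\tau_n$ is the subsequent first hit of $\partial D\cup\partial B_{2\rho}$, and $\sigma_{n+1}$ is the next return to $B_\rho$, the strong Markov property forces the number of excursions before exit to stochastically dominate a geometric random variable with success parameter $\exp(-(\bar{V}(D)-\eta)/\gamma(\epsilon))$; combined with the positive lower bound on $\mathbb{E}(\sigma_{n+1}-\tau_n)$ coming from Hypothesis \ref{hyp:exit}(ii), this yields the matching lower bound on $\mathbb{E}\tau^x_\epsilon$.

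The principal obstacle is the short initial layer on $[0,O(\epsilon)]$, where $u^x_\epsilon$ behaves like $e^{tA/\epsilon}x$ and has not yet spatially averaged; on this layer the $C([0,T];H_\mu)$ LDP is simply unavailable for non-constant $x$. Hypothesis \ref{hyp:exit}(iii) (invariance of $D$ under $e^{tA}$) is engineered precisely to prevent an immediate exit on this scale, and in the renewal construction the regeneration times $\sigma_n$ must be placed so that the process is already essentially spatially constant when restarted, so that the $C([0,T];H_\mu)$ LDP of Theorem \ref{theor:main}(ii) applies. Reconciling the $O(\epsilon)$-scale spatial relaxation with the $O(1)$-scale effective one-dimensional dynamics, while propagating the LDP estimates uniformly over the bounded sets arising at each regeneration, is the core technical difficulty of the proof.
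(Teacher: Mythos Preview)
Your proposal is correct and follows essentially the same Freidlin--Wentzell route as the paper: the paper reduces Theorem \ref{theor:exit} to five lemmas (Lemmas \ref{lem:18}--\ref{lem:21}) that play exactly the roles you describe---a uniform lower bound on the exit probability in finite time, a uniform bound on the time to reach $B_\rho\cup\partial D$, convergence to $B_\rho$ before $\partial D$, short-time stability near the starting point, and the key upper bound on the probability of hitting $N\subset\partial D$ from $\partial B_{2\rho}$---and then invokes Theorem 5.7.11 of \cite{dembo2009large} verbatim.

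One organizational difference worth noting: for the upper bound you propose to wait a time $T_0$ until the process is ``essentially spatially constant'' and then apply Theorem \ref{theor:main}(ii), the LDP on $C([0,T];H_\mu)$ for constant initial data. The paper instead works directly with the uniform LDP on $C([\delta,T^*];H_\mu)$ from Theorem \ref{theor:main}(i), constructing for each $x\in D$ a path $z^x$ that first follows the deterministic flow into $B_\rho$, then moves linearly to $0$, and finally follows a near-optimal exit path; the lower bound of the LDP is applied to the open set $\Psi=\bigcup_{x\in D}\{u:\sup_{[\delta,T^*]}|u-z^x|_{H_\mu}<h\}$. This sidesteps the delicate point in your sketch that $u^y_\epsilon(T_0)$ is only \emph{approximately} constant, so Theorem \ref{theor:main}(ii) does not literally apply after restarting via the Markov property. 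Your identification of the initial-layer obstacle and of the role of Hypothesis \ref{hyp:exit}(iii) is exactly right; in the paper this surfaces in the proof of Lemma \ref{lem:22} (splitting $[0,T_x]$ into $[0,\delta]$, where $u^x_\epsilon$ is compared to $e^{tA/\epsilon}x$, and $[\delta,T_x]$, where the averaging estimate applies) and in Lemma \ref{lem:21} (where an extra term $\mathbb{P}(\tau^x_\epsilon<\delta)$ is controlled via Lemma \ref{lem:23}).
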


The proof of Theorem \ref{theor:exit} is a consequence of  the following series of lemmas. Once these lemmas are established, the proof of Theorem \ref{theor:exit}  proceeds as in the finite dimensional case (see Theorem 5.7.11 of \cite{dembo2009large}). We list the lemmas below, and postpone their proofs until Appendix \ref{sec:exit_lemmas}.

In what follows, we set
\begin{equation*}
B_\rho := \{y \in H_\mu:|y|_{H_\mu} \leq \rho\},
\end{equation*} 
and, for every $\rho>0$ such that $B_\rho\subset D$,  we define the stopping times
\begin{equation*}
\sigma_\epsilon^x(\rho):=\inf \{t \geq 0: u_\epsilon^x(t) \in B_\rho \cup \partial D\}.
\end{equation*}
\begin{lemma}\label{lem:18}
For any $\eta > 0$, there exists a $T < \infty$ such that
\begin{equation*}
\liminf_{\epsilon \to 0} \gamma(\epsilon) \log \inf_{x \in D} \mathbb{P}_x(\tau_\epsilon^x \leq T) > -(\bar{V}(D) + \eta).
\end{equation*}
\end{lemma}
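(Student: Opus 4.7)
The plan is to adapt the classical finite-dimensional Freidlin--Wentzell scheme (cf.\ Chapter 5.7 of \cite{dembo2009large}, Chapter 4.2 of \cite{freidlin1998random}) to the fast-transport infinite-dimensional setting. I will split the event $\{\tau_\epsilon^x \leq T\}$ into an \emph{approach phase}, in which Hypothesis~\ref{hyp:exit}(ii) combined with the averaging drives the trajectory into a small $H_\mu$-ball about $0$, and an \emph{exit phase}, in which a single large-deviation fluctuation carries it across $\partial D$; the two phases will be glued by the strong Markov property.

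For the approach phase, fix a small $\rho > 0$ so that $B_{2\rho} \subset D$. Since $D$ is bounded in $H_\mu$, setting $c_2 := \sup_{x \in D} |\langle x, \mu \rangle|$ and applying Hypothesis~\ref{hyp:exit}(ii) with $c_1 = \rho/2$ produces $T_2 > 0$ with $|u^x(T_2)|_{H_\mu} < \rho/2$ for every $x \in D$. The averaging convergence of Theorem 4.1 of \cite{cerrai2011}, upgraded to be uniform in $x \in D$ via the uniform upper bound in Theorem~\ref{theor:main}(i) applied to the (zero-action) deterministic limit $u^x$, yields for $\epsilon$ small
\[
\inf_{x \in D} \mathbb{P}\bigl(|u_\epsilon^x(T_2) - u^x(T_2)|_{H_\mu} < \rho/2\bigr) \geq \tfrac12,
\]
so that the event $\{u_\epsilon^x(T_2) \in B_\rho\}$ has probability at least $1/2$ uniformly in $x \in D$.

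For the exit phase, the finiteness of $\bar V(D)$ and the observation following \eqref{eq:Vbar} that $V$ is finite only on the line $\mathbb{R} \subset H_\mu$ of spatially constant functions permit me to pick $T_1 > 0$, a constant $z \notin \bar D$, and a constant-in-space path $\phi^* \in C([0, T_1]; \R)$ with $\phi^*(0) = 0$, $\phi^*(T_1) = z$, and $I_{0, T_1}(\phi^*) \leq \bar V(D) + \eta/3$. I also fix $\rho' \in (0, \mathrm{dist}_{H_\mu}(z, \bar D))$. Applying the strong Markov property at time $T_2$ and using the uniform LDP of Theorem~\ref{theor:main}(i) on $C([\delta', T_1 + \delta']; H_\mu)$ for a short buffer $\delta' > 0$, the shifted path $w^y(t) := \langle y, \mu\rangle + \phi^*(\max(t - \delta', 0))$ starts at $\langle y, \mu \rangle$ and ends at $z + \langle y, \mu\rangle \notin \bar D$; its action differs from $I_{0, T_1}(\phi^*)$ by at most $\eta/3$ when $|\langle y, \mu \rangle|$ is small, thanks to continuity of $\bar F$ and the non-degeneracy \eqref{nondeg} of $\mathcal{H}_{\bar\rho}$. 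This yields
\[
\liminf_{\epsilon \to 0} \gamma(\epsilon) \log \inf_{y \in B_\rho \cap H}\mathbb{P}\bigl(u_\epsilon^y(s) \notin \bar D \text{ for some } s \in [\delta', T_1 + \delta']\bigr) \geq -(\bar V(D) + 2\eta/3).
\]
Chaining this with the approach-phase estimate gives $\inf_{x \in D}\mathbb{P}_x(\tau_\epsilon^x \leq T_2 + T_1 + \delta') \geq \tfrac12 \exp(-(\bar V(D) + 2\eta/3 + o(1))/\gamma(\epsilon))$, which delivers the required strict inequality $\liminf > -(\bar V(D) + \eta)$.

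The most delicate step will be this bridging: the restart $y = u_\epsilon^x(T_2)$ is not in general exactly spatially constant, so Theorem~\ref{theor:main}(ii)---which would give an LDP starting from time $0$---does not apply verbatim. My plan is to absorb this defect using the buffer $\delta' > 0$ (during which the fast averaging runs to completion) and to invoke Theorem~\ref{theor:main}(i) for $y$ in bounded subsets of $H$ rather than requiring constant initial data. The a priori estimate \eqref{eq:apriori_alpha0} controls $|u_\epsilon^x(T_2)|_H$ in probability, uniformly in $x \in D \cap H$, so after intersecting with this ``good'' event the approach-phase probability remains bounded below (say by $1/4$), and the action of the shifted path $w^y$ stays close to $I_{0,T_1}(\phi^*)$ because, by construction, $\langle u_\epsilon^x(T_2), \mu\rangle$ is $O(\rho)$.
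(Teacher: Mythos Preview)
Your two-phase Markov-restart approach is a legitimate route and can be made to work, but it is considerably more involved than the paper's argument, and the extra machinery buys you nothing here.

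The paper avoids the strong Markov property entirely. For each $x\in D$ it builds a \emph{single} test path $z^x\in C([0,T^*];\R)$ in three pieces: on $[0,T_1]$ it follows the deterministic ODE $u^x$ into $B_\rho$ (zero action); on $[T_1,T_1+1]$ it linearly interpolates to $0$ (action $O(\rho^2)$, made $<\eta/4$ by choosing $\rho$ small); on $[T_1+1,T^*]$ it follows a fixed near-optimal exit path $v$ with $I^0_{0,T_2}(v)<\bar V(D)+\eta/4$. All the paths agree on $[T_1+1,T^*]$ and exit $\bar D$ there, so one application of the uniform LDP lower bound \eqref{eq:LDP_lower} on $C([\delta,T^*];H_\mu)$ to the open tube $\Psi=\bigcup_{x\in D}\{u:\sup_{[\delta,T^*]}|u-z^x|_{H_\mu}<h\}$ finishes the proof.

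Compared with this, your scheme introduces a genuine extra technical burden: at the restart time $T_2$ you must feed $y=u_\epsilon^x(T_2)$ back into the uniform LDP of Theorem~\ref{theor:main}(i), but that uniformity is over bounded sets in $H$, not $H_\mu$. You correctly flag this and propose to intersect with a ``good'' event $\{|u_\epsilon^x(T_2)|_H\le R\}$ via \eqref{eq:apriori_alpha0}; this works, but it is precisely the complication the paper's one-shot construction sidesteps, since there the approach phase is absorbed into the action functional at zero cost and no restart ever occurs. Your shifted path $w^y(t)=\langle y,\mu\rangle+\phi^*(\max(t-\delta',0))$ is also a little awkward (constant on $[0,\delta']$, hence carrying action $\int_0^{\delta'}|\bar F(\langle y,\mu\rangle)|^2/\mathcal H_{\bar\rho}\,dt$ that must be separately controlled); the paper's choice of following the ODE and then interpolating linearly to $0$ is cleaner and makes the total action estimate $I^x_{0,T^*}(z^x)\le c\rho^2+\bar V(D)+\eta/4$ immediate.
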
 
\begin{lemma}\label{lem:19}
Let $\rho>0$ be such that $B_\rho \subset D$. Then
\begin{equation*}
\lim_{t \to \infty} \limsup_{\epsilon \to 0} \gamma(\epsilon) \log \sup_{x \in D} \mathbb{P}(\sigma_\epsilon^x(\rho) > t) = - \infty.
\end{equation*}
\end{lemma}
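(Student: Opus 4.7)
The plan is to combine the deterministic stability in Hypothesis \ref{hyp:exit}(ii) with a one-step large deviation upper bound and a Markov property iteration, extracting super-exponential decay for $\mathbb{P}(\sigma_\epsilon^x(\rho) > t)$ as $t \to \infty$. First, since $D$ is bounded in $H_\mu$, I would take $R := \sup_{y \in D} |y|_{H_\mu}$ and apply the second part of Hypothesis \ref{hyp:exit}(ii), with $c_1 = \rho/3$ and $c_2 = R$, to produce $T_0 = T(\rho/3, R)$ such that every averaged trajectory $u^y$ starting at $y \in D$ satisfies $u^y(t) \in B_{\rho/3}$ for all $t \geq T_0$.

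The heart of the matter is a one-step estimate: I would prove that there is $c_0 = c_0(T_0,\rho) > 0$ with
\[
\limsup_{\epsilon \to 0} \gamma(\epsilon) \log \sup_{x \in D} \mathbb{P}\bigl(\sigma_\epsilon^x(\rho) > T_0+1\bigr) \leq -c_0.
\]
On $\{\sigma_\epsilon^x(\rho) > T_0+1\}$ we have $u_\epsilon^x(T_0+1) \notin B_\rho$, while the preparatory step gives $u^x(T_0+1) \in B_{\rho/3}$, so $|u_\epsilon^x(T_0+1) - u^x(T_0+1)|_{H_\mu} \geq 2\rho/3$. The set
\[
K := \bigl\{u \in C([1,T_0+1];H_\mu) : |u(T_0+1)|_{H_\mu} \geq \rho\bigr\}
\]
is closed, and since the rate function $I^y_{1,T_0+1}$ vanishes only at $u^y|_{[1,T_0+1]}$ and $u^y(T_0+1) \in B_{\rho/3}$, no trajectory $u^y$ with $y \in D$ belongs to $K$. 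Combining the continuity of the flow $y \mapsto u^y$ with the good-rate-function property, one obtains $\inf_{y \in D'} \inf_{u \in K} I^y_{1,T_0+1}(u) > 0$ on any bounded subset $D'$ of $H$, and the uniform upper bound of Theorem \ref{theor:main}(i) delivers the estimate on $D'$. To extend this to all $x \in D$, which is bounded only in $H_\mu$, I would condition on the value $u_\epsilon^x(\epsilon)$: the a priori bound from Proposition \ref{prop:wellposed}, together with the smoothing of $e^{tA/\epsilon}$, shows that $u_\epsilon^x(\epsilon)$ lies with overwhelming probability in a bounded set of $H$ whose size depends only on $R$, after which the strong Markov property reduces the one-step estimate to initial data in that $H$-bounded set.

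Once the one-step estimate is in hand, the strong Markov property applied at the times $k(T_0+1)$ shows that on $\{\sigma_\epsilon^x(\rho) > k(T_0+1)\}$ the process sits at some $u_\epsilon^x(k(T_0+1)) \in D \setminus B_\rho \subset D$, and hence by induction
\[
\sup_{x \in D} \mathbb{P}\bigl(\sigma_\epsilon^x(\rho) > k(T_0+1)\bigr) \leq \Bigl(\sup_{y \in D} \mathbb{P}\bigl(\sigma_\epsilon^y(\rho) > T_0+1\bigr)\Bigr)^k.
\]
Taking $\gamma(\epsilon)\log$, then $\limsup_{\epsilon \to 0}$, then $k \to \infty$, combined with the monotonicity of $t \mapsto \mathbb{P}(\sigma_\epsilon^x(\rho) > t)$ to handle non-integer multiples of $T_0+1$, yields the desired $-\infty$ limit. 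The main obstacle I anticipate is the uniformity of the one-step estimate over $x \in D$, precisely because $D$ is assumed bounded in the weaker norm $|\cdot|_{H_\mu}$ while the uniform LDP of Theorem \ref{theor:main} is formulated over bounded subsets of $H$; the short-time conditioning described above is what reconciles the two scales.
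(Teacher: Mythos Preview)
Your approach is exactly the standard Dembo--Zeitouni argument (one-step LDP upper bound plus Markov iteration) that the paper invokes; the paper's own proof is a one-line deferral to Lemma~5.7.19 of \cite{dembo2009large}, so you are filling in precisely what the paper omits. You are also more careful than the paper in flagging the mismatch between the $H_\mu$-boundedness of $D$ and the $H$-boundedness required by Theorem~\ref{theor:main}; the paper does not address this explicitly, and your short-time conditioning idea is a reasonable way to close it (note, however, that the a~priori estimate~\eqref{eq:apriori} itself depends on $|x|_H$, so the smoothing must really come from the semigroup, e.g.\ via the kernel bound~\eqref{eq:sem_kernel}, to place $e^{A}x$ in a bounded set of $H$ when $x$ is only bounded in $H_\mu$).
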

\begin{lemma} \label{lem:22}
Let $\rho >0$ be such that $B_\rho \subset D$. Then, for any $x \in D$,
\begin{equation*}
\lim_{\epsilon \to 0} \mathbb{P}\le(u^x_\epsilon(\sigma_\epsilon^x(\rho)) \in B_\rho \r) = 1.
\end{equation*}
\end{lemma}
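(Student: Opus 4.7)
Fix $x \in D$ and $\rho > 0$ with $B_\rho \subset D$. The strategy is to identify a deterministic horizon $T_0$ by which the deterministic trajectory $u^x$ has entered $B_{\rho/4}$, and then to show that with probability tending to one, $u_\epsilon^x$ stays in $D$ and enters $B_\rho$ before time $T_0$; this forces $\sigma_\epsilon^x(\rho) \leq T_0$ and $u_\epsilon^x(\sigma_\epsilon^x(\rho)) \in B_\rho$. Since $D$ is bounded, Hypothesis \ref{hyp:exit}(ii) applied with $c_2 = \sup_{y \in D}|y|_{H_\mu}$ and $c_1 = \rho/4$ produces such a $T_0$. Using Hypothesis \ref{hyp:exit} and the one-dimensional structure of the limiting ODE, one checks that $u^x(t) \in D$ for every $t \in [0, T_0]$, so $u^x([0, T_0])$ is a compact subset of $D$. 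Similarly, $K_x := \{e^{sA}x : s \geq 0\} \cup \{\langle x, \mu \rangle\}$ is compact in $H_\mu$ (by \eqref{eq:longterm}) and contained in $D$ (by Hypothesis \ref{hyp:exit}(iii)). Set
\[
d := \min\big(\mathrm{dist}_{H_\mu}(u^x([0, T_0]), \partial D),\ \mathrm{dist}_{H_\mu}(K_x, \partial D)\big) > 0, \qquad \eta := \min(d, \rho)/4.
\]

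For any $\delta \in (0, T_0)$, Proposition \ref{prop:conv} applied with the constant sequence $x_\epsilon = x$ and trivial control $\varphi_\epsilon = \varphi = 0$---so that \eqref{eq:control_PDE} reduces to \eqref{eq:main} and \eqref{eq:control_ODE} reduces to the deterministic ODE defining $u^x$---yields $\mathbb{P}(|u_\epsilon^x - u^x|_{C([\delta, T_0]; H_\mu)} > \eta) \to 0$. On the complementary event $A_\epsilon^1$, the trajectory $u_\epsilon^x(t)$ lies at $H_\mu$-distance at least $3d/4$ from $\partial D$ for every $t \in [\delta, T_0]$, and $|u_\epsilon^x(T_0)|_{H_\mu} \leq \rho/4 + \eta \leq \rho/2$, so in particular $u_\epsilon^x(T_0) \in B_\rho$.

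The main technical obstacle is to control the transient interval $[0, \delta]$, on which Proposition \ref{prop:conv} does not apply. The plan is to compare $u_\epsilon^x$ directly to $e^{tA/\epsilon}x$, which lies in $K_x \subset D$ at distance at least $d$ from $\partial D$, independently of $\epsilon$. From the mild formulation
\[
u_\epsilon^x(t) - e^{tA/\epsilon}x = \int_0^t e^{(t-s)A/\epsilon} F(s, u_\epsilon^x(s))\,ds + \alpha(\epsilon)\, w_{A,Q}^\epsilon(u_\epsilon^x)(t) + \beta(\epsilon)\, w_{A,B}^\epsilon(t),
\]
the Lipschitz bound on $F$, the a priori estimate \eqref{eq:apriori_alpha0}, and the bounds \eqref{eq:Bconv_bound}--\eqref{eq:Qconv_bound} yield, via a Gronwall argument,
\[
\mathbb{E}\sup_{t \in [0, \delta]} |u_\epsilon^x(t) - e^{tA/\epsilon}x|_H \leq C\big(\delta(1+|x|_H) + \alpha(\epsilon) + \beta(\epsilon)\big).
\]
Choosing $\delta$ small enough that the first summand is less than $\eta/2$, and then sending $\epsilon \to 0$, the embedding $H \hookrightarrow H_\mu$ from Remark \ref{rem:embedding} together with Markov's inequality show that $A_\epsilon^2 := \{|u_\epsilon^x - e^{\cdot A/\epsilon}x|_{C([0,\delta]; H_\mu)} \leq \eta\}$ satisfies $\mathbb{P}(A_\epsilon^2) \to 1$. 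On $A_\epsilon^2$, $u_\epsilon^x(t)$ remains at $H_\mu$-distance at least $3d/4$ from $\partial D$ throughout $[0, \delta]$. A union bound gives $\mathbb{P}(A_\epsilon^1 \cap A_\epsilon^2) \to 1$, and on this intersection the continuous trajectory $u_\epsilon^x$ never touches $\partial D$ on $[0, T_0]$ while $u_\epsilon^x(T_0) \in B_\rho$, which proves the lemma.
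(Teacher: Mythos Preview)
Your approach is essentially the same as the paper's: split time into a short transient $[0,\delta]$ on which you compare $u_\epsilon^x$ to the fast semigroup orbit $e^{tA/\epsilon}x$ (kept inside $D$ by the invariance in Hypothesis~\ref{hyp:exit}(iii)), and a long interval $[\delta,T_0]$ on which you compare to the averaged ODE trajectory $u^x$ (which stays in $D$ and enters $B_\rho$). The paper carries out exactly this decomposition, re-deriving the $[\delta,T_0]$ estimate by hand rather than invoking Proposition~\ref{prop:conv}.

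There is, however, a small gap in your handling of $A_\epsilon^2$. With $\delta$ \emph{fixed}, your moment bound reads
\[
\mathbb{E}\sup_{t\in[0,\delta]}\bigl|u_\epsilon^x(t)-e^{tA/\epsilon}x\bigr|_{H_\mu}\;\le\; C\delta(1+|x|_H)+C\bigl(\alpha(\epsilon)+\beta(\epsilon)\bigr),
\]
so Markov's inequality only gives $\limsup_{\epsilon\to 0}\mathbb{P}\bigl((A_\epsilon^2)^c\bigr)\le C\delta(1+|x|_H)/\eta$, which is a fixed positive number; the assertion $\mathbb{P}(A_\epsilon^2)\to 1$ does not follow. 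The repair is routine. Either (i) keep $\delta$ fixed, combine with $\mathbb{P}\bigl((A_\epsilon^1)^c\bigr)\to 0$ to obtain $\limsup_{\epsilon\to 0}\mathbb{P}\bigl(u_\epsilon^x(\sigma_\epsilon^x(\rho))\in\partial D\bigr)\le C\delta(1+|x|_H)/\eta$, and then let $\delta\downarrow 0$; or (ii) follow the paper and take $\delta=\epsilon^r$ with $r\in(0,1)$, so that the transient bound vanishes while $\delta/\epsilon\to\infty$ still forces the $e^{-\gamma\delta/\epsilon}$ term (hidden inside the $[\delta,T_0]$ estimate) to vanish as well. Since you invoke Proposition~\ref{prop:conv} directly on $[\delta,T_0]$, option (i) is the cleaner fix in your write-up.
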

\begin{lemma}\label{lem:23}
Let $\rho >0$ be such that $B_\rho \subset D$. Then for any $\eta > 0$, there exists $T< \infty$ such that
\begin{equation*}
\limsup_{\epsilon \to 0} \gamma(\epsilon) \log \sup_{x \in B_\rho} \mathbb{P}(\sup_{0 \leq t \leq T} |u_\epsilon^x(t) - x|_{H_\mu}\geq 3\rho) < -\eta.
\end{equation*}
\end{lemma}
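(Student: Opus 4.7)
The plan is to work with the mild formulation and decompose
\[u_\epsilon^x(t)-x = \underbrace{(e^{tA/\epsilon}x-x)}_{\mathrm{I}(t)} + \underbrace{\int_0^t e^{(t-s)A/\epsilon}F(s,u_\epsilon^x(s))\,ds}_{\mathrm{II}(t)} + \underbrace{\alpha(\epsilon)\, w_{A,Q}^\epsilon(u_\epsilon^x)(t)}_{\mathrm{III}(t)} + \underbrace{\beta(\epsilon)\, w_{A,B}^\epsilon(t)}_{\mathrm{IV}(t)},\]
and show that each piece is small in $H_\mu$ on an event whose complement has probability at most $\exp(-\eta/\gamma(\epsilon))$. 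The first observation is that, since $e^{tA/\epsilon}$ is a contraction on $H_\mu$ by Remark~\ref{rem:embedding}, the triangle inequality delivers the entirely deterministic bound $\sup_{t\geq 0}|\mathrm{I}(t)|_{H_\mu}\leq 2|x|_{H_\mu}\leq 2\rho$, which consumes $2\rho$ of the allowed $3\rho$ and leaves only $\rho$ of slack to be shared among the other three terms.

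To exploit this, I would introduce the exit time $\tau_\epsilon^x:=\inf\{t\geq 0:|u_\epsilon^x(t)-x|_{H_\mu}\geq 3\rho\}$. For $t<\tau_\epsilon^x$ one has $|u_\epsilon^x(t)|_{H_\mu}\leq 4\rho$; combined with the $H_\mu$-Lipschitz bound $|F(s,h)|_{H_\mu}\leq c(1+|h|_{H_\mu})$ (a direct consequence of Hypothesis~\ref{hyp:coeff} and Remark~\ref{rem2.3}) together with the contractivity of the semigroup, this yields $\sup_{0\leq t\leq T\wedge\tau_\epsilon^x}|\mathrm{II}(t)|_{H_\mu}\leq c_F(1+4\rho)\,T$. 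Choosing $T$ small enough that $c_F(1+4\rho)T\leq\rho/2$, continuity of the trajectory and the equality $|u_\epsilon^x(\tau_\epsilon^x)-x|_{H_\mu}=3\rho$ at the stopping time force the inclusion
\[\{\tau_\epsilon^x\leq T\}\subset\Bigl\{\sup_{0\leq t\leq T}|\mathrm{III}(t)|_{H_\mu}+\sup_{0\leq t\leq T}|\mathrm{IV}(t)|_{H_\mu}\geq\rho/2\Bigr\},\]
reducing everything to an exponential tail estimate on the two noise contributions alone, uniformly for $x\in B_\rho$.

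For the additive boundary noise $\mathrm{IV}=\beta(\epsilon)w_{A,B}^\epsilon$, a genuinely Gaussian process in $C([0,T];H)$, one can refine \eqref{eq:Bconv_bound} by tracking the small-time behaviour of the kernel via the identity \eqref{eq:Srho_identity}, obtaining $\mathbb{E}\sup_{[0,T]}|w_{A,B}^\epsilon|_H^2\leq C\phi(T)$ with $\phi(T)\to 0$ as $T\to 0$, uniformly in $\epsilon\in(0,1]$; Borell's concentration inequality together with $\beta(\epsilon)^2\leq\gamma(\epsilon)$ and the embedding $H\hookrightarrow H_\mu$ then yields $\mathbb{P}(\sup_{[0,T]}|\mathrm{IV}|_{H_\mu}\geq\rho/4)\leq\exp(-c\rho^2/(\gamma(\epsilon)\phi(T)))$. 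For $\mathrm{III}$, which is not Gaussian because its integrand depends on $u_\epsilon^x$, I would stop the integrand at $\tau_\epsilon^x$, where $G(s,u_\epsilon^x(s))$ becomes deterministically bounded, and apply a Bernstein-type exponential martingale inequality mode-by-mode along the eigenbasis of $\sqrt Q$, using Hypothesis~\ref{hyp:eigenvalues} to control the Hilbert-Schmidt tail, to obtain an analogous bound $\exp(-c\rho^2/(\gamma(\epsilon)\psi(T)))$ with $\psi(T)\to 0$ as $T\to 0$. Combining these pieces gives
\[\limsup_{\epsilon\to 0}\gamma(\epsilon)\log\sup_{x\in B_\rho}\mathbb{P}(\tau_\epsilon^x\leq T)\leq -\,c\,\rho^2/\max(\phi(T),\psi(T)),\]
which lies below $-\eta$ once $T$ is chosen small enough. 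The main obstacle is precisely the Bernstein inequality for the infinite-dimensional, multiplicative stochastic integral $\mathrm{III}$: scalar exponential martingale bounds do not apply directly, so one has either to reduce to a sum of scalar sub-martingales along the eigenbasis of $\sqrt Q$ or to invoke a Hilbert-space version of Bernstein's inequality, being careful throughout that the rate $1/\psi(T)$ genuinely diverges as $T\to 0$ and hence beats any prescribed $\eta>0$.
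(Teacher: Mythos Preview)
Your decomposition and the treatment of the deterministic terms $\mathrm{I}$ and $\mathrm{II}$ are exactly what the paper does (the paper absorbs the $\mathrm{II}$-term via the self-referential inequality $Y\leq 2\rho+cT(1+\rho+Y)+\text{noise}$ instead of a stopping time, but this is cosmetic). The only real divergence is in how you handle the stochastic convolution $\mathrm{III}$.

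You missed a simplification that dissolves your ``main obstacle'': Hypothesis~\ref{hyp:exit}(i) assumes $\sup_{(\xi,r)}|g(\xi,r)|<\infty$, so the integrand $G(s,u_\epsilon^x(s))$ is deterministically bounded on all of $\Omega\times[0,T]$, \emph{without} any stopping. With a uniformly bounded integrand, the exponential tail estimate for stochastic convolutions in Hilbert space is an off-the-shelf result (the paper cites Peszat \cite{peszat}), giving directly
\[
\mathbb{P}\Bigl(\sup_{0\leq t\leq T}|w_{A,Q}^\epsilon(u_\epsilon^x)(t)|_{H_\mu}\geq \tfrac{\rho}{3\alpha(\epsilon)}\Bigr)\leq c\,\exp\Bigl(-\tfrac{\rho^2}{c_T\,\alpha(\epsilon)^2}\Bigr),
\]
with $c_T\to 0$ as $T\to 0$; the same applies to $w_{A,B}^\epsilon$. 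This replaces both your Borell argument for $\mathrm{IV}$ and your mode-by-mode Bernstein construction for $\mathrm{III}$, and in particular removes the need to sum infinitely many scalar exponential bounds. Your route would ultimately work too, but it re-proves a known inequality and introduces an unnecessary localisation.
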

\begin{lemma} \label{lem:21}
Let $\rho >0$ be such that $B_{2\rho} \subset D$. Then, for any closed set $N \subset \partial D$, we have
\begin{equation*}
\lim_{\rho \to 0} \limsup_{\epsilon \to 0} \gamma(\epsilon) \log \sup_{x \in \partial B_{2 \rho}}\mathbb{P}\le(u^x_\epsilon(\sigma_\epsilon^x(\rho)) \in N \r) \leq -\inf_{z \in N} V(z).
\end{equation*}
\end{lemma}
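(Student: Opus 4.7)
The plan is to execute the standard Freidlin--Wentzell exit argument: decompose the event according to whether $\sigma^x_\epsilon(\rho) \leq T$; control the long-time excursion via Lemma \ref{lem:19}; and bound the probability of exit through $N$ within time $T$ by combining the uniform large deviations upper bound from Theorem \ref{theor:main} with a suitable lower bound on the action functional of the exiting paths.

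Fix $\eta > 0$ and set $s := \inf_{z \in N} V(z)$, which we may assume is finite. By Lemma \ref{lem:19}, choose $T < \infty$ so large that
\[
\limsup_{\epsilon \to 0}\,\gamma(\epsilon) \log \sup_{x \in D}\, \mathbb{P}\le( \sigma^x_\epsilon(\rho) > T \r) \leq -(s + \eta).
\]
On the complementary event $\{\sigma^x_\epsilon(\rho) \leq T,\ u^x_\epsilon(\sigma^x_\epsilon(\rho)) \in N\}$, the trajectory $u^x_\epsilon|_{[0,T]}$ belongs to the closed subset
\[
\Phi^N_{T,\rho} := \le\{\, \phi \in C([0,T];H_\mu)\, :\, \exists\, t \in [0,T],\ \phi(t) \in N,\ |\phi(s)|_{H_\mu} \geq \rho\ \ \forall\, s \in [0,t] \,\r\}
\]
of $C([0,T]; H_\mu)$, and hence, upon restriction, to a closed subset of $C([\delta,T];H_\mu)$ for any fixed small $\delta > 0$. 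The uniform upper bound from Theorem \ref{theor:main} then yields
\[
\limsup_{\epsilon \to 0}\, \gamma(\epsilon) \log \sup_{x \in \partial B_{2\rho}}\, \mathbb{P}\le( u^x_\epsilon \in \Phi^N_{T,\rho} \r) \leq -\inf\le\{\, I^x_{\delta,T}(\phi)\, :\, x \in \partial B_{2\rho},\, \phi \in \Phi^N_{T,\rho}|_{[\delta,T]} \,\r\}.
\]

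The heart of the proof is to establish that this infimum is at least $s - C\rho$ for some constant $C > 0$ independent of $\rho$. Since $I^x_{\delta,T}(\phi) < \infty$ forces $\phi$ to be constant in space, the problem reduces to a one-dimensional one via the representation $I^x_{\delta,T}(\phi) = \inf_w I_{0,T}(w)$ over extensions $w \in C([0,T];\R)$ with $w(0) = \langle x,\mu\rangle$ and $w|_{[\delta,T]} = \phi$. Given such an extension $w$ reaching $N$ at some time $t_w \leq T$ with value $z := w(t_w)$, and noting that $|w(0)| = |\langle x,\mu\rangle| \leq |x|_{H_\mu} = 2\rho$, one prepends a linear segment of duration $\sigma > 0$ joining $0$ to $w(0)$. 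By the explicit formula \eqref{eq:action}, the uniform lower bound on $\mathcal{H}_{\bar{\rho}}$ from Hypothesis \ref{hyp:nondeg}, and the local boundedness of $\bar{F}$ near $0$, the action of this prepended segment is $O(\rho^2/\sigma + \sigma)$; optimizing at $\sigma = \rho$ gives $O(\rho)$. The concatenated path joins $0$ to $z \in N$, so $V(z) \leq I_{0,t_w}(w) + O(\rho)$, whence $I_{0,t_w}(w) \geq V(z) - O(\rho) \geq s - O(\rho)$ uniformly in $w$, $x$, and $\phi$.

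Combining the two displayed estimates and sending first $\rho \to 0$, then $\eta \to 0$, yields the claim. The main technical obstacle is the apparent mismatch between the uniform LDP (uniform for $x$ in closed bounded subsets of $H$) and the desired supremum over $x \in \partial B_{2\rho}$, which is controlled only in $H_\mu$. This is overcome by a localization argument exploiting the smoothing of $e^{tA/\epsilon}$: after any positive time $\delta$, $u^x_\epsilon$ is with overwhelming probability close in $H_\mu$ to the constant function $\langle x,\mu\rangle \in \R$, whose modulus is bounded by $|x|_{H_\mu} = 2\rho$; moreover, Lemma \ref{lem:23} controls the probability of large excursions in $H_\mu$ on short time intervals, so that introducing a stopping time at $\{|u^x_\epsilon|_H = R\}$ for large $R$ and restricting to the event that this stopping time exceeds $\delta$ reduces the problem to initial data in an $H$-bounded set without affecting the final logarithmic asymptotics.
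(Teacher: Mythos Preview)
Your overall strategy matches the paper's, and your action lower bound via a linear prepended segment is equivalent to the paper's use of $\lim_{x \to 0} V(x) = 0$. However, there is a genuine gap in how you pass from $C([0,T];H_\mu)$ to $C([\delta,T];H_\mu)$. The uniform LDP of Theorem~\ref{theor:main} is available only in $C([\delta,T];H_\mu)$ for $\delta > 0$. On your event $\{\sigma^x_\epsilon(\rho) \leq T,\ u^x_\epsilon(\sigma^x_\epsilon(\rho)) \in N\}$ it may happen that $\sigma^x_\epsilon(\rho) < \delta$; then the restriction $u^x_\epsilon|_{[\delta,T]}$ need not touch $N$ at all. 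Consequently the image of $\Phi^N_{T,\rho}$ under restriction to $[\delta,T]$ is neither closed nor contained in the set of paths reaching $N$, and its closure can contain paths $\phi$ with very small $I^x_{\delta,T}(\phi)$ (e.g.\ paths close to the deterministic trajectory $u^{\langle x,\mu\rangle}$). Your action lower bound then breaks: an extension $w$ of such a $\phi$ to $[0,T]$ with $w(0)=\langle x,\mu\rangle$ need not reach $N$, so there is no $z\in N$ with which to invoke $V(z)$.

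The paper repairs this with a three-term rather than two-term decomposition,
\[
\mathbb{P}\!\le(u^x_\epsilon(\sigma^x_\epsilon(\rho)) \in N\r) \leq \mathbb{P}(\tau^x_\epsilon < \delta) + \mathbb{P}(\sigma^x_\epsilon(\rho) > T) + \mathbb{P}\!\le(u^x_\epsilon \in \Psi_{\delta,T}(N)\r),
\]
where $\Psi_{\delta,T}(N) := \{u: \exists\, t \in [\delta,T],\ u(t) \in N\}$ is genuinely closed in $C([\delta,T];H_\mu)$. The new first term is handled by Lemma~\ref{lem:23}: for $\rho$ small one has $\mathrm{dist}_{H_\mu}(x,\partial D) \geq 6\rho$ for all $x \in \partial B_{2\rho}$, and Lemma~\ref{lem:23} makes $\sup_{x}\mathbb{P}(\tau^x_\epsilon < \delta)$ exponentially small at any prescribed level once $\delta$ is chosen small. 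Only after this term is split off can the LDP upper bound be applied cleanly to the third term, followed by the quasipotential comparison $V(x)+I^x_{0,T}(\Psi_{0,T}(N))\geq \inf_{z\in N}V(z)$. Your extra constraint $|\phi(s)|_{H_\mu}\geq \rho$ on $[0,t]$ is never used and can be dropped.

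Regarding the $H$-versus-$H_\mu$ boundedness issue you flag at the end: the paper simply applies the uniform LDP over $\partial B_{2\rho}$ without comment, so you are right that something is being swept under the rug; but the localization sketch you offer is too vague to count as a resolution.
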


\section{Appendix A: Some Lemmas used in Section \ref{sec:proof}}
We start with a first preliminary result.

\begin{lemma}
For every $\varphi, \psi \in H$  and  $t>0$ we have
\begin{equation}\label{eq:Glemma_3}
|e^{tA}\,(\varphi \sqrt{Q} \psi)|_{H_\mu} \leq c \le(t^{-\frac{d}{2\zeta}}+1\r) |\varphi|_{H_\mu} |\psi|_H.
\end{equation}
\end{lemma}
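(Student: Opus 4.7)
The plan is to combine the pointwise heat-kernel representation with the eigenfunction summability in Hypothesis \ref{hyp:eigenvalues} and the invariance of $\mu$ under $e^{tA}$. The exponent $\zeta=2\rho/(\rho-2)$ appears naturally as the Hölder conjugate of $\rho/2$, since $2/\rho+2/\zeta=1$.

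First I would write, using the integral kernel,
\[
(e^{tA}(\varphi\sqrt{Q}\psi))(\xi)=\int_{\mathcal{O}} k_t(\xi,\eta)\,\varphi(\eta)\,(\sqrt{Q}\psi)(\eta)\,d\eta,
\]
and apply the Cauchy--Schwarz inequality with respect to the (uniformly bounded-mass) kernel weight $k_t(\xi,\eta)\,d\eta$, obtaining
\[
|(e^{tA}(\varphi\sqrt{Q}\psi))(\xi)|^2 \le c\int k_t(\xi,\eta)\,\varphi(\eta)^2 (\sqrt{Q}\psi)(\eta)^2\,d\eta.
\]
Next I would apply Hölder to the inner integral with exponents $\rho/2$ and $\zeta/2$ to decouple $\varphi$ from $\sqrt{Q}\psi$:
\[
\int k_t\,\varphi^2(\sqrt{Q}\psi)^2\,d\eta \le \Bigl(\int k_t^{\rho/2}\varphi^\rho\,d\eta\Bigr)^{2/\rho}\,|\sqrt{Q}\psi|_{L^\zeta(\mathcal{O})}^{\,2}.
\]

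The key ingredient is the bound $|\sqrt{Q}\psi|_{L^\zeta}\le C|\psi|_H$. For this I would expand $\sqrt{Q}\psi=\sum_k \lambda_k\langle\psi,e_k\rangle e_k$, use Minkowski and Cauchy--Schwarz to reduce matters to $\sum_k \lambda_k^2|e_k|_{L^\zeta}^2<\infty$, and then invoke the interpolation $|e_k|_{L^\zeta}^2\le|e_k|_\infty^{2-4/\zeta}=|e_k|_\infty^{4/\rho}$ (since $|e_k|_{L^2}=1$), which reduces the requirement exactly to the summability $\sum_k(\lambda_k^{\rho}|e_k|_\infty^2)^{2/\rho}<\infty$ provided by Hypothesis \ref{hyp:eigenvalues} together with the constraint $\rho<2d/(d-2)$.

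For the kernel factor, I would use the heat-kernel bound \eqref{eq:sem_kernel} together with the identity $k_t^{\rho/2}\le c(t^{-d/2}+1)^{\rho/2-1}k_t$ to pull out a factor of $(t^{-d/2}+1)^{1-2/\rho}$ and reduce the remaining expression to a power of $(e^{tA}|\varphi|^\rho)(\xi)$:
\[
\Bigl(\int k_t^{\rho/2}\varphi^\rho\,d\eta\Bigr)^{2/\rho}\le c(t^{-d/2}+1)^{1-2/\rho}(e^{tA}|\varphi|^{\rho})(\xi)^{2/\rho}.
\]
Combining the three pieces, integrating over $d\mu(\xi)$, and invoking both Jensen's inequality (with the concave power $2/\rho\le 1$) and the $\mu$-invariance $\int e^{tA}|\varphi|^\rho\,d\mu=\int|\varphi|^{\rho}\,d\mu$ (together with the $L^\infty$ bound on the density $m$ from Remark \ref{rem2.3} to control $|\varphi|_{L^\rho(\mu)}$ by $|\varphi|_{H_\mu}$), one obtains
\[
|e^{tA}(\varphi\sqrt{Q}\psi)|_{H_\mu}^2\le c\bigl(t^{-d/\zeta}+1\bigr)|\varphi|_{H_\mu}^2|\psi|_H^2,
\]
and taking square roots yields the announced bound after simplifying the exponent $\tfrac12\cdot\tfrac{d(\rho-2)}{2\rho}=\tfrac{d}{2\zeta}$.

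The main obstacle is Step 3, namely the control of $|\sqrt{Q}\psi|_{L^\zeta}$ in terms of $|\psi|_H$ from the abstract summability hypothesis; the choice $\zeta=2\rho/(\rho-2)$ is precisely calibrated so that the geometric interpolation of $|e_k|_{L^\zeta}$ together with the weight $\lambda_k^\rho|e_k|_\infty^2$ closes, and this is where the constraint $\rho<2d/(d-2)$ plays its role. A secondary technical point is the last integration against $d\mu(\xi)$, which requires comparing $|\varphi|_{L^\rho(\mu)}$ with $|\varphi|_{H_\mu}$; here the $L^\infty$ bound on $m$ is used to pass between $L^p(\mu)$ scales.
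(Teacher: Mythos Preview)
Your approach has two genuine gaps, both occurring at the points you flag as ``main obstacles.''

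\textbf{The $L^\rho(\mu)$ versus $H_\mu$ issue (Step 6).} After Jensen and the $\mu$-invariance you arrive at $|\varphi|_{L^\rho(\mu)}^2$ on the right-hand side, with $\rho>2$. You then claim that the $L^\infty$ bound on $m$ lets you control this by $|\varphi|_{H_\mu}=|\varphi|_{L^2(\mu)}$. It does not: for $\rho>2$ the inequality $|\varphi|_{L^\rho(\mu)}\le c\,|\varphi|_{L^2(\mu)}$ is false in general, and in fact $|\varphi|_{L^\rho(\mu)}$ can be infinite for $\varphi\in H$. Since the lemma is later applied with $\varphi=g(s,\cdot,u_\epsilon(s,\cdot))$, which is only known to lie in $H$ (or $H_\mu$), this is fatal for your route.

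\textbf{The $\sqrt{Q}:H\to L^\zeta$ bound (Step 4).} Your reduction via Minkowski and Cauchy--Schwarz leads to the requirement $\sum_k\lambda_k^2|e_k|_{L^\zeta}^2<\infty$, and after interpolation this becomes $\sum_k\lambda_k^2|e_k|_\infty^{4/\rho}=\sum_k(\lambda_k^\rho|e_k|_\infty^2)^{2/\rho}<\infty$. But Hypothesis~\ref{hyp:eigenvalues} only gives $\sum_k\lambda_k^\rho|e_k|_\infty^2<\infty$, and since $2/\rho<1$ the implication $\sum a_k<\infty\Rightarrow\sum a_k^{2/\rho}<\infty$ fails (take $a_k=k^{-2}$). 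So this step is not justified.

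The paper's argument avoids both problems by expanding $\sqrt{Q}\psi$ in the eigenbasis \emph{before} estimating, applying a three-exponent H\"older in the index $k$ (not in $\eta$), and then using Parseval's identity
\[
\sum_k |e^{tA}(\varphi e_k)|_{H_\mu}^2=\int_{\mathcal O}|k_t(\xi,\cdot)\varphi(\cdot)|_H^2\,d\mu(\xi)\le c\,(t^{-d/2}+1)\,|\varphi|_{H_\mu}^2,
\]
which uses only the $L^2$-orthonormality of $\{e_k\}$ and the pointwise kernel bound. This produces $|\varphi|_{H_\mu}$ directly, without ever passing through $L^\rho$ norms of $\varphi$ or $L^\zeta$ norms of $\sqrt{Q}\psi$.
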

\begin{proof}
If we set $\zeta = \frac{2\rho}{\rho-2}$, we have $\frac{1}{\zeta}+\frac{1}{\rho}+\frac{1}{2} = 1$. Thus, for any $t > 0$ and $\psi \in H$, due to condition \eqref{eq:rhohyp} we have
\begin{equation}\label{eq:Glemma_1}
\begin{array}{l}
\ds{ |e^{tA}(\varphi \sqrt{Q} \psi)|_{H_\mu}  = \le| \sum_k \lambda_k  \langle \psi, e_k \rangle e^{tA}(\varphi e_k) \r|_{H_\mu}}\\
\vspace{.1mm}\\
\ds{  \leq \le(\sum_k \lambda_k^{\rho} |e_k|_\infty^{2} \r)^{\frac 1\rho}  \le(\sum_k |e_k|_\infty^{-\frac{2\zeta}\rho} \big|e^{tA}(\varphi e_k) \big|_{H_\mu}^\zeta \r)^{\frac 1\zeta} |\varphi|_H}\\
\vspace{.1mm}\\
\ds{ \leq \kappa_Q^{\frac 1\rho} |\psi|_{H} \le(\sum_k \big|e^{tA}(\varphi e_k) \big|_{H_\mu}^2 \r)^{\frac 1\zeta} \sup_k \le( |e_k|_\infty^{-\frac 2\rho} \le|e^{tA}(\varphi e_k)\r|_{H_\mu}^{\frac{\zeta-2}\zeta} \r).}
\end{array}
\end{equation}
By Remark \ref{rem:embedding}, the semigroup is a contraction on $H_\mu$.  Then, since $(\zeta-2)/\zeta = 2/\rho$,
\begin{equation}\label{eq:Glemma_2}
\sup_{k \geq 0}\,  |e_k|_\infty^{-\frac 2\rho}  \le|e^{tA}(\varphi e_k)\r|_{H_\mu}^{\frac{\zeta-2}\zeta}\leq   \sup_{k \geq 0}\,  |e_k|_\infty^{-\frac 2\rho}  \le|\varphi e_k\r|_{H_\mu}^{\frac{2}\rho}\leq |\varphi|_{H_\mu}^{\frac 2\rho}.
\end{equation}
Moreover, thanks to $\eqref{eq:sem_kernel}$ and the invariance of the semigroup with respect to the measure $\mu$, we obtain
\[\begin{array}{l}
\ds{\sum_k \le|e^{tA}(\varphi e_k) \r|_{H_\mu}^2   = \int_\mathcal{O} \sum_k |\langle k_t(\xi,\cdot)\varphi(\cdot),e_k(\cdot) \rangle|^2 d\mu(\xi) = \int_\mathcal{O} |k_t(\xi,\cdot)\varphi(\cdot)|_H^2 d\mu(\xi)}\\
\vspace{.1mm}\\
\ds{  \leq c\,(t^{-\frac d2}+1) \int_\mathcal{O} e^{tA} |\varphi(\xi)|^2d\mu(\xi) = c\, (t^{-\frac d2}+1) |\varphi|_{H_\mu}^2.}
\end{array}\]
Due to $\eqref{eq:Glemma_1}$ and $\eqref{eq:Glemma_2}$, this implies that \eqref{eq:Glemma_3} holds.
\end{proof}

Now, we are ready to state and  prove all lemmas used in Section \ref{sec:proof}.
\begin{lemma}\label{lem:Fterm}
For every $\e>0$, let us define 
\begin{equation*}
I^1_\epsilon(t)= \int_0^t e^{(t-s)\frac A\e} F(s,u_\epsilon^{x_\epsilon,\varphi_\epsilon}(s))ds  -\int_0^t \bar{F}(s,u^{x,\varphi}(s)) ds,
\end{equation*}
as in Proposition \ref{prop:conv}. Then for any $p \geq 1$ and $T > 0$, we have
\begin{equation}\label{eq:Flemma_main}
\mathbb{E}\sup_{0 \leq t \leq T} |I^1_\epsilon(t)|_{H_\mu}^p \leq r_{T,p}(\e)+ c_{T, p}\, \int_0^T \mathbb{E}\sup_{0 \leq s \leq t} |u_\epsilon^{x_\epsilon,\varphi_\epsilon}(s)-u^{x,\varphi}(s)|_{H_\mu}^p\,ds,
\end{equation}
where $r_{T,p}(\epsilon)$ is some non-negative function such that $r_{T,p}(\epsilon) \to 0$, as $\epsilon \to 0$.
\end{lemma}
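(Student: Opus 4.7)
The plan is to split $I^1_\epsilon(t)$ into a ``Lipschitz'' piece that will feed Gronwall and an ``averaging'' piece that will become $r_{T,p}(\e)$. Namely, write
\begin{equation*}
I^1_\epsilon(t) = \int_0^t e^{(t-s)\frac A\e}\bigl[F(s,u_\epsilon^{x_\epsilon,\varphi_\epsilon}(s)) - F(s,u^{x,\varphi}(s))\bigr]\,ds \; + \; \int_0^t \bigl[e^{(t-s)\frac A\e}F(s,u^{x,\varphi}(s)) - \bar F(s,u^{x,\varphi}(s))\bigr]\,ds =: A_\e(t) + B_\e(t).
\end{equation*}

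For the term $A_\e$, I would first note that $F(s,\cdot):H_\mu\to H_\mu$ is Lipschitz uniformly in $s\in[0,T]$ (this follows directly from Hypothesis \ref{hyp:coeff}(i) since $|F(s,u)-F(s,v)|_{H_\mu}^2 = \int_\mathcal{O}|f(s,\xi,u(\xi))-f(s,\xi,v(\xi))|^2\,d\mu(\xi)\leq L^2|u-v|_{H_\mu}^2$), and that $e^{tA}$ is a contraction on $H_\mu$ by Remark~\ref{rem:embedding}. Hölder's inequality in the time integral then gives
\begin{equation*}
\sup_{0\le t\le T}|A_\e(t)|_{H_\mu}^p \leq c_{T,p}\int_0^T\sup_{0\le s\le t}|u_\epsilon^{x_\epsilon,\varphi_\epsilon}(s)-u^{x,\varphi}(s)|_{H_\mu}^p\,dt,
\end{equation*}
which is exactly the Gronwall-type contribution in \eqref{eq:Flemma_main}.

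For the term $B_\e$, the key point is that $u^{x,\varphi}(s)\in\R$ is a scalar (the solution of the one-dimensional controlled ODE \eqref{eq:control_ODE}), so $F(s,u^{x,\varphi}(s))\in H_\mu$ depends on the spatial variable only through the coefficient $f(s,\xi,\cdot)$; in particular $\bar F(s,u^{x,\varphi}(s)) = \langle F(s,u^{x,\varphi}(s)),\mu\rangle$. Hypothesis~\ref{hyp:longterm} applied to $h = F(s,u^{x,\varphi}(s))$ yields
\begin{equation*}
\Bigl|e^{(t-s)\frac A\e}F(s,u^{x,\varphi}(s)) - \bar F(s,u^{x,\varphi}(s))\Bigr|_{H_\mu} \leq c\,e^{-\gamma(t-s)/\e}\,|F(s,u^{x,\varphi}(s))|_{H_\mu}.
\end{equation*}
A deterministic Gronwall argument applied to \eqref{eq:control_ODE} gives a uniform (in $\omega$) bound $\sup_{s\in[0,T]}|u^{x,\varphi}(s)|\leq C_{T,M,|x|_H}$, and hence (using the sublinear growth of $f$ inherited from the Lipschitz assumption plus $\sup_t|f(t,\cdot,0)|_{L^\infty}<\infty$) $\sup_{s\in[0,T]}|F(s,u^{x,\varphi}(s))|_{H_\mu}$ is bounded by a deterministic constant. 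Integrating in $s$ produces a factor $\e/\gamma$, so that $\sup_t|B_\e(t)|_{H_\mu}^p\le c_{T,p,M,|x|_H}\,\e^p$, which we may absorb into $r_{T,p}(\e)\to 0$.

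The only subtle point—the main obstacle—is obtaining a bound on $|F(s,u^{x,\varphi}(s))|_{H_\mu}$ that is deterministic in $\omega$ and independent of $\e$, because $\varphi\in\mathcal P^M(V)$ is random and $B_\e$ must be controlled pointwise in $\omega$ (in view of the $L^p(\Omega)$ estimate sought). This is handled by noting that pathwise, $|\varphi|_{L^2(0,T;V)}^2\leq M$ almost surely, so the Gronwall estimate for $u^{x,\varphi}$ on $[0,T]$ gives an a.s. bound depending only on $M$, $T$ and $|x|_H$; after taking expectations and using the uniform weak convergence hypothesis, this becomes the desired $\e$-independent control. Together with the splitting above, this delivers~\eqref{eq:Flemma_main} with $r_{T,p}(\e)=O(\e^p)$.
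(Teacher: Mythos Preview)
Your proof is correct, and the overall strategy---split into an averaging piece controlled by the spectral gap \eqref{eq:longterm} and a Lipschitz piece feeding Gronwall---is the same as the paper's, but you insert the intermediate term differently. The paper splits at $\int_0^t\bar F(s,u_\epsilon^{x_\epsilon,\varphi_\epsilon}(s))\,ds$: the averaging estimate is applied to $F(s,u_\epsilon^{x_\epsilon,\varphi_\epsilon}(s))$ and therefore relies on the a~priori bound \eqref{eq:apriori} for the controlled SPDE solution, while the Lipschitz step uses only the scalar Lipschitz continuity of $\bar F:H_\mu\to\R$. You instead split at $\int_0^t e^{(t-s)A/\e}F(s,u^{x,\varphi}(s))\,ds$, applying the spectral gap to $F$ evaluated at the \emph{limiting} ODE solution and using Lipschitz continuity of $F$ in $H_\mu$ together with the $H_\mu$-contraction of the semigroup for the Gronwall term. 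The payoff of your route is that the averaging piece $B_\e$ is bounded \emph{almost surely} by $c_{T,M,|x|_H}\,\e$, since $u^{x,\varphi}$ admits a pathwise bound from $\varphi\in S^M(V)$ a.s.; no stochastic a~priori estimate is needed for this lemma. The paper's route is marginally more economical in that it simply reuses \eqref{eq:apriori} and does not invoke the semigroup contraction. Both yield $r_{T,p}(\e)=O(\e^p)$.
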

\begin{proof}
We can rewrite $I^1_\epsilon(t)$ as follows.
\[\begin{array}{l}
\ds{I^1_\epsilon(t) = J_1^\epsilon(t) + J_2^\epsilon(t):= \le(\int_0^t e^{(t-s)\frac A\e}F(s,u_\epsilon^{x_\epsilon,\varphi_\epsilon}(s))ds - \int_0^t\bar{F}(s,u_\epsilon^{x_\epsilon,\varphi_\epsilon}(s)) ds   \r) }\\
\vspace{.1mm}\\
\ds{ + \le( \int_0^t \le[ \bar{F}(s,u_\epsilon^{x_\epsilon,\varphi_\epsilon}(s)) - \bar{F}(s,u^{x,\varphi}(s)) \r] ds \r).}
\end{array}\]
Concerning $J_1^\epsilon$, thanks to $\eqref{eq:longterm}$ and the a priori estimate $\eqref{eq:apriori}$, we obtain
\begin{equation}\label{eq:Flemma_1}
\begin{array}{l}
\ds{
 \mathbb{E} \sup_{0 \leq t \leq T} |J_1^\epsilon(t)|_{H_\mu}^p  \leq  c\,\mathbb{E} \sup_{0 \leq t \leq T} \le(\int_0^t  e^{-\frac{\gamma(t-s)}\epsilon} |F(s,u_\epsilon^{x_\epsilon,\varphi_\epsilon}(s))|_{H_\mu}ds \r)^p }\\
\vspace{.1mm}\\
\ds{  \leq c_p \le( 1+\mathbb{E}  \sup_{0 \leq t \leq T}|u_\epsilon^{x_\epsilon,\varphi_\epsilon}(t)|_{H_\mu}^p \r) \le(\int_0^T e^{-\frac{\gamma s}\epsilon} ds \r)^p  \leq \epsilon^p c_{T,p,M} (1+|x_\epsilon|_H^p) \leq \epsilon^p c_{T,p,M},}
\end{array}
\end{equation}
where the last inequality follows from the fact that the sequence $\{x_\epsilon\}_{\epsilon>0}$ is weakly convergent and hence resides in a bounded set of $H$. Next, concerning $J_2^\epsilon(t)$, we have
\[\begin{array}{l}
\ds{\mathbb{E}\sup_{0 \leq t \leq T}|J_2^\epsilon(t)|^p  \leq  T^{p-1} \mathbb{E} \int_0^T  |\bar{F}(s,u_\epsilon^{x_\epsilon,\varphi_\epsilon}(s)) - \bar{F}(s,u^{x,\varphi}(s))|^p ds}\\
\vspace{.1mm}\\
\ds{ \leq c\, T^{p-1}  \int_0^T \mathbb{E}\sup_{0 \leq s \leq t}|u_\epsilon^{x_\epsilon,\varphi_\epsilon}(s) - u^{x,\varphi}(s)|_{H_\mu}^p dt.}
\end{array}\]
This inequality, together with $\eqref{eq:Flemma_1}$, implies \eqref{eq:Flemma_main}.
\end{proof}

\begin{lemma}\label{lem:Gterm}
For every $\e>0$, let us define
\begin{equation*}
I^2_\epsilon(t)= \int_0^t e^{(t-s)\frac A\e} G(s,u_\epsilon^{x_\epsilon,\varphi_\epsilon}(s))\le[\sqrt{Q}\varphi_H^\epsilon(s)\r]ds - \int_0^t \bar{G}(s,u^{x,\varphi}(s)) \le[\sqrt{Q}\varphi_H(s) \r]ds,
\end{equation*}
 as in Proposition \ref{prop:conv}. Then, for every $p\geq 1$ and $T>0$, the following estimate holds.
\begin{equation*}
\mathbb{E}\sup_{0 \leq t \leq T} |I^2_\epsilon(t)|_{H_\mu}^2 \leq r_{T,p}(\e)+c_{T,p} \, \int_0^T \mathbb{E}\sup_{0 \leq s \leq t} |u_\epsilon^{x_\epsilon,\varphi_\epsilon}(s)-u^{x,\varphi}(s)|_{H_\mu}^2 \,dt,
\end{equation*}
where $r_{T,p}(\epsilon)$ is some non-negative function such that $r_{T,p}(\epsilon) \to 0$, as $\epsilon \to 0$.
\end{lemma}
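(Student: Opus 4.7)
My plan is to split $I^2_\epsilon$ into four pieces: an averaging term $A_\epsilon$, a prefactor-mismatch term $B_\epsilon$, a Lipschitz-difference term $C_\epsilon$ that will produce the Gronwall-type right-hand side of the bound, and a weak-convergence term $D_\epsilon$. Using the identity $\bar G(s,u)h = \langle G(s,u)h,\mu\rangle$ together with the invariance $\langle e^{tA}h,\mu\rangle = \langle h,\mu\rangle$, write $I^2_\epsilon = A_\epsilon + B_\epsilon + C_\epsilon + D_\epsilon$, where
\begin{align*}
A_\epsilon(t) &:= \frac{\alpha(\epsilon)}{\sqrt{\gamma(\epsilon)}}\int_0^t \Bigl\{ e^{(t-s)A/\epsilon}\bigl[G(s,u_\epsilon^{x_\epsilon,\varphi_\epsilon}(s))\sqrt{Q}\varphi_H^\epsilon(s)\bigr] - \bar G(s,u_\epsilon^{x_\epsilon,\varphi_\epsilon}(s))\sqrt{Q}\varphi_H^\epsilon(s)\Bigr\}\,ds, \\
B_\epsilon(t) &:= \Bigl(\frac{\alpha(\epsilon)}{\sqrt{\gamma(\epsilon)}} - \frac{1}{1+\bar\rho}\Bigr)\int_0^t \bar G(s,u_\epsilon^{x_\epsilon,\varphi_\epsilon}(s))\sqrt{Q}\varphi_H^\epsilon(s)\,ds, \\
C_\epsilon(t) &:= \frac{1}{1+\bar\rho}\int_0^t \bigl[\bar G(s,u_\epsilon^{x_\epsilon,\varphi_\epsilon}(s))-\bar G(s,u^{x,\varphi}(s))\bigr]\sqrt{Q}\varphi_H^\epsilon(s)\,ds, \\
D_\epsilon(t) &:= \frac{1}{1+\bar\rho}\int_0^t \bar G(s,u^{x,\varphi}(s))\sqrt{Q}\bigl[\varphi_H^\epsilon(s)-\varphi_H(s)\bigr]\,ds.
\end{align*}

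The main difficulty is $A_\epsilon$. I cannot apply the spectral gap \eqref{eq:longterm} directly to $G(s,u_\epsilon)\sqrt Q\varphi_H^\epsilon$ because the pointwise product with $\sqrt Q\varphi_H^\epsilon(s)$ is not nicely controlled in $H_\mu$. To get around this I factor $e^{(t-s)A/\epsilon} = e^{(t-s)A/(2\epsilon)}\circ e^{(t-s)A/(2\epsilon)}$, apply \eqref{eq:Glemma_3} on the inner half to obtain
\begin{equation*}
\bigl|e^{(t-s)A/(2\epsilon)}[G(s,u_\epsilon)\sqrt Q\varphi_H^\epsilon]\bigr|_{H_\mu} \leq c\bigl(((t-s)/(2\epsilon))^{-d/(2\zeta)}+1\bigr)\bigl(1+|u_\epsilon(s)|_H\bigr)|\varphi_H^\epsilon(s)|_H,
\end{equation*}
and then apply the spectral gap to the outer half, using the identity $\bar G(s,u_\epsilon)\sqrt Q\varphi_H^\epsilon = \langle e^{(t-s)A/(2\epsilon)}[G(s,u_\epsilon)\sqrt Q\varphi_H^\epsilon],\mu\rangle$ from the invariance of $\mu$. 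Cauchy--Schwarz in $s$, together with $\int_0^\infty e^{-\gamma\tau}\tau^{-d/\zeta}d\tau<\infty$ (which holds by Hypothesis \ref{hyp:eigenvalues}, since $\zeta>d$), the change of variables $\tau = (t-s)/\epsilon$, the a priori bound \eqref{eq:apriori}, the estimate $|\varphi_H^\epsilon|_{L^2(0,T;H)}^2\leq M$ $\mathbb{P}$-a.s., and the uniform boundedness of $\alpha(\epsilon)/\sqrt{\gamma(\epsilon)}$ from \eqref{lim*} yield $\mathbb{E}\sup_{0\leq t\leq T}|A_\epsilon(t)|_{H_\mu}^2 \leq c_{T,M}\,\epsilon\,(1+|x_\epsilon|_H^2) = o(1)$, since $\{x_\epsilon\}$ is bounded in $H$.

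For $B_\epsilon$, the scalar prefactor tends to $0$ by \eqref{lim*}, while the integral is uniformly $L^2(\Omega)$-bounded via the Lipschitz/growth estimate for $\bar G$, boundedness of $\sqrt Q$ on $H$, \eqref{eq:apriori}, and $|\varphi_H^\epsilon|_{L^2}^2\leq M$. For $C_\epsilon$, \eqref{eq:Gbar_lip} gives $|C_\epsilon(t)|\leq c\int_0^t|u_\epsilon^{x_\epsilon,\varphi_\epsilon}(s)-u^{x,\varphi}(s)|_{H_\mu}|\sqrt Q\varphi_H^\epsilon(s)|_{H_\mu}\,ds$; combining with the embedding $H\hookrightarrow H_\mu$ (Remark \ref{rem:embedding}), boundedness of $\sqrt Q$, Fubini, and Cauchy--Schwarz yields the Gronwall-ready term $c_{T,M}\int_0^T\mathbb{E}\sup_{0\leq r\leq s}|u_\epsilon^{x_\epsilon,\varphi_\epsilon}(r)-u^{x,\varphi}(r)|_{H_\mu}^2\,ds$. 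For $D_\epsilon$, the identity $\bar G(s,u)[\sqrt Q h] = \langle \sqrt Q[g(s,\cdot,u(\cdot))m],h\rangle_H$ (with $m\in L^\infty(\mathcal O)$ the density of $\mu$, cf.\ Remark \ref{rem2.3}) rewrites $D_\epsilon(t)=(1+\bar\rho)^{-1}\int_0^t\langle K(s),\varphi_H^\epsilon(s)-\varphi_H(s)\rangle_H\,ds$ with $K\in L^2(0,T;H)$ $\mathbb{P}$-a.s. The hypothesis $\varphi_H^\epsilon\rightharpoonup\varphi_H$ in $L^2(0,T;H)$ gives pointwise convergence $D_\epsilon(t)\to 0$; the Cauchy--Schwarz bound $|D_\epsilon(t')-D_\epsilon(t)|\leq c|t'-t|^{1/2}\sqrt M|K|_{L^2(0,T;H)}$ provides uniform equicontinuity on $[0,T]$, upgrading pointwise to uniform convergence on $[0,T]$, and dominated convergence (with the same Cauchy--Schwarz estimate furnishing an integrable envelope) gives $\mathbb{E}\sup_t|D_\epsilon(t)|^2\to 0$.

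Putting the pieces together, $A_\epsilon$, $B_\epsilon$ and $D_\epsilon$ all contribute to $r_{T,p}(\epsilon)\to 0$, while $C_\epsilon$ supplies the Gronwall term, establishing the lemma. The main obstacle is the averaging piece $A_\epsilon$: the half-semigroup decomposition, which reconciles the smoothing estimate \eqref{eq:Glemma_3} with the spectral gap \eqref{eq:longterm}, is precisely what makes the linear growth of $G$ in $u_\epsilon$ and the spatial roughness of $\sqrt Q\varphi_H^\epsilon$ (lying in $H$ but not in $L^\infty$) simultaneously compatible with the $H_\mu$-valued target bound.
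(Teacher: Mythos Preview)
Your proof is correct and follows essentially the same approach as the paper: the paper's three-term decomposition $J_1^\epsilon, J_2^\epsilon, J_3^\epsilon$ corresponds exactly to your $A_\epsilon, D_\epsilon, C_\epsilon$, and the key step---handling the averaging term via the half-semigroup factorization $e^{(t-s)A/\epsilon}=e^{(t-s)A/(2\epsilon)}\circ e^{(t-s)A/(2\epsilon)}$ together with \eqref{eq:Glemma_3} and the spectral gap \eqref{eq:longterm}---is identical. Your extra term $B_\epsilon$ arises only because you retain the prefactors $\alpha(\epsilon)/\sqrt{\gamma(\epsilon)}$ and $1/(1+\bar\rho)$ (present in Proposition~\ref{prop:conv} but omitted from the lemma statement) and is trivially absorbed into $r_{T,p}(\epsilon)$.
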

\begin{proof}
We can rewrite $I^2_\epsilon(t)$ as follows.
\[\begin{array}{l}
\ds{I^2_\epsilon(t)  = \le(\int_0^t e^{(t-s)\frac A\e} G(s,u_\epsilon^{x_\epsilon,\varphi_\epsilon}(s))\le[\sqrt{Q}\varphi_H^\epsilon(s)\r]\,ds - \int_0^t \bar{G}(s,u^{x_\e,\varphi_\e}(s))\le[\sqrt{Q}\varphi_H^\epsilon(s)\r]\, ds   \r)}\\
\vspace{.1mm}\\
\ds{ + \le( \int_0^t \bar{G}(s,u^{x,\varphi}(s))\le[ \sqrt{Q}(\varphi_H^\epsilon(s)-\varphi_H(s))\r]\, ds   \r) }\\
\vspace{.1mm}\\
\ds{ + \le( \int_0^t \le(\bar{G}(s,u_\epsilon^{x_\epsilon,\varphi_\epsilon}(s))-\bar{G}(s,u^{x,\varphi}(s)) \r])\,\le[\sqrt{Q}\varphi_H^\epsilon(s)\r]\, ds   \r)   =: \sum_{i=1}^3 J_i^\epsilon(t).}
\end{array}\] 

{\em Step 1.} We first show that for any $p \geq 1$, \begin{equation}\label{eq:Glemma_4}
\lim_{\epsilon \to 0}\mathbb{E} \sup_{0 \leq t \leq T} \le| J_1^\epsilon(t)\r|_{H_\mu}^p =0.
\end{equation}
Due to the invariance of the semigroup with respect to $\mu$ and $\eqref{eq:longterm}$, we have 
\[\begin{array}{l}
\ds{|J_1^\epsilon(t)|_{H_\mu}  \leq \le| \int_0^t  e^{(t-s)\frac A\e}G(s,u_\epsilon^{x_\epsilon,\varphi_\epsilon}(s))\le[\sqrt{Q}\varphi^\epsilon_H\r] ds\r.}\\
\vspace{.1mm}\\
\ds{ \le. - \int_0^t \langle e^{(t-s)\frac A{2\e}} G(s,u_\epsilon^{x_\epsilon,\varphi_\epsilon}(s))\le[ \sqrt{Q}\varphi^\epsilon_H(s)\r], \mu \rangle ds \r|_{H_\mu} }\\
\vspace{.1mm}\\
\ds{ \leq  \int_0^t e^{-\frac{\gamma (t-s)}{2\epsilon}} \le|e^{(t-s)\frac A{2\e}}G(u_\epsilon^{x_\epsilon,\varphi_\epsilon})\le[\sqrt{Q}\varphi^\epsilon_H(s)\r]\r|_{H_\mu} ds.}
\end{array}\]
Note that $\frac{d}{\zeta} < 1$ since $ \rho <\frac{2d}{d-2}$. Then, by applying  inequality \eqref{eq:Glemma_3} with $\theta = g(s,\cdot,u_\epsilon^{x_\epsilon,\varphi_\epsilon}(s,\cdot))$ we conclude that
\[\begin{array}{l}
\ds{\le| J_1^\epsilon(t) \r|_{H_\mu}   \leq c\int_0^t e^{-\frac{\gamma(t-s)}{2\epsilon}}\le[\le((t-s)/\e\r)^{-\frac d{2\zeta}} +1 \r] |g(s,\cdot,u_\epsilon^{x_\epsilon,\varphi_\epsilon}(s))|_{H_\mu} |\varphi_H^\epsilon(s)|_H\, ds}\\
\vspace{.1mm}\\
\ds{  \leq c\, \le(\int_0^T e^{-\frac {\gamma t}\epsilon}\le[\le(t/\e \r)^{-\frac d\zeta}+1\r]ds \r)^{1/2} \le(\int_0^T |\varphi_H^\epsilon(s)|^2_H ds \r)^{1/2}\le(1 + \sup_{0 \leq s \leq t}|u_\epsilon^{x_\epsilon,\varphi_\epsilon}(s)|_{H_\mu}  \r)}\\
\vspace{.1mm}\\
\ds{ \leq c_M\epsilon^{\frac 12} \le(1 + \sup_{0 \leq s \leq t}|u_\epsilon^{x_\epsilon,\varphi_\epsilon}(s)|_{H_\mu} \r).}
\end{array}\]
In view of estimate $\eqref{eq:apriori}$, since $\sup_{\epsilon \in (0,1]}|x_\epsilon|_{H_\mu} < \infty$, we obtain $\eqref{eq:Glemma_4}$ upon taking the $p$th moment. 

\medskip

{\em Step 2.} We show that for any $p \geq 1$  \begin{equation}\label{eq:Glemma_5}
\lim_{\epsilon \to 0}\,    \mathbb{E} \sup_{0 \leq t \leq T} \le| J_2^\e(t)\r|^p =0.
\end{equation}
For every $\psi \in\,L^2(0,T;H)$, we define
 \begin{equation*}
\Lambda_\psi(t):= \int_0^t \bar{G}(s,u^{x,\varphi}(s)) \le[\sqrt{Q}\psi(s)\r]\, ds,\ \ \ \ t \in\,[0,T].
 \end{equation*}
 First, we show that the family $\{\Lambda_{\varphi^\epsilon_H}\}_{\epsilon \in (0,1]}$ is equi-continuous and equi-bounded in $[0,T]$, $\mathbb{P}$-a.s. Actually, we have
\[\begin{array}{l}
\ds{|\Lambda_{\varphi_H^\epsilon}(t+h)-\Lambda_{\varphi_H^\epsilon}(t)|  =  \le|\int_t^{t+h}  \bar{G}(s,u^{x,\varphi}(s))\le[\sqrt{Q}\varphi_H^\epsilon(s)\r]\,ds \r| }\\
\vspace{.1mm}\\
\ds{ \leq c \le(\int_t^{t+h} \int_\mathcal{O} |g(s,\xi,u^{x,\varphi}(s))|^2 d\mu(\xi) ds \r)^{1/2} \le(\int_t^{t+h}\int_\mathcal{O} |\varphi^\epsilon_H(s,\xi)|^2d\mu(\xi)ds\r)^{1/2}}\\
\vspace{.1mm}\\
\ds{ \leq c \le(\int_t^{t+h} (1+|u^{x,\varphi}(s)|^2)ds \r)^{1/2} |\varphi^\epsilon_H |_{L^2(0,T;H)}.}
\end{array}\]
Then, since $u^{x,\varphi} \in C([0,T];\R)$, $\mathbb{P}$-a.s., we have that
\begin{equation}\label{eq:Glemma_6}
\sup_{\epsilon \in (0,1]} \sup_{0 \leq t \leq T} |\Lambda_{\varphi_H^\epsilon}(t+h)-\Lambda_{\varphi_H^\epsilon}(t)| \leq c_M \sqrt{h}, \qquad \mathbb{P}-a.s.,
\end{equation}
for some random variable $c_M \in\,L^2(\Omega)$. Next, we observe that for each fixed $t \in [0,T]$ the linear functional $\psi \in\,L^2(0,T;H) \mapsto \Lambda_\psi(t) \in\,\mathbb{R}$ is  bounded. Therefore, by the weak convergence of the sequence $\{\varphi^\epsilon_H\}$ to $\varphi_H$, we may conclude that
\begin{equation*}
\lim_{\epsilon \to 0} \Lambda_{\varphi_H^\epsilon}(t) = \Lambda_{\varphi_H}(t)=\int_0^t \bar{G}(s,u^{x,\varphi}(s))\le[\sqrt{Q}\varphi_H(s) \r]ds, \ \ \ \ \mathbb{P}-a.s.,
\end{equation*}
 and estimate $\eqref{eq:Glemma_6}$ implies that this convergence is uniform with respect to $t \in [0,T]$. Finally, noting that $J_2^\epsilon(t) = \Lambda_{\varphi_H^\epsilon}(t)-\Lambda_{\varphi_H}(t)$, we conclude that $\eqref{eq:Glemma_5}$ holds from the dominated convergence theorem.

\medskip

{\em Step 3.}
Using the Lipschitz continuity of $g$, we have
\[\begin{array}{l}
\ds{|J_3^\epsilon(t)|^2  \leq \le(\int_0^t\int_\mathcal{O} \le|\le(G(s,u_\epsilon^{x_\epsilon,\varphi_\epsilon})-G(s,u^{x,\varphi})\r)\le[\sqrt{Q}\varphi^\epsilon_H(s)\r]\, \r| d\mu(\xi)ds \r)^2}\\
\vspace{.1mm}\\
\ds{ \leq c\,|u_\epsilon^{x_\epsilon,\varphi_\epsilon}-u^{x,\varphi}|_{L^2(0,T;H_\mu)}^2 |\varphi_H^\epsilon|_{L^2(0,T;H_\mu)}^2 \leq c_M\int_0^T \sup_{0 \leq s \leq t} |u_\epsilon^{x_\epsilon,\varphi_\epsilon}(s)-u^{x,\varphi}(s)|_{H_\mu}^2 dt.}
\end{array}\]
This, together with $\eqref{eq:Glemma_4}$ and $\eqref{eq:Glemma_5}$, concludes the proof.
\end{proof}

\begin{lemma}\label{lem:Sterm} For every $\e>0$, let us define 
\begin{equation*}
I^3_\epsilon(t) = (\delta_0 - A) \int_0^t e^{(t-s)\frac A\e} N_{\delta_0} \le[\Sigma(s) \sqrt{B}\varphi_Z^\epsilon(s) \r]ds - \delta_0\int_0^t \langle  N_{\delta_0}\le[\Sigma(s)\sqrt{B}\varphi_Z(s)\r], \mu \rangle ds,
\end{equation*}
as in Proposition \ref{prop:conv}. Then for any $p \geq 1$,
\begin{equation*}
\lim_{\epsilon \to 0}\mathbb{E} \sup_{0 \leq t \leq T} |I^3_\epsilon(t)|_{H_\mu}^p = 0.
\end{equation*}
\end{lemma}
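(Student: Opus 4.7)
The plan is to follow the structural template of Lemma \ref{lem:Gterm}, writing $I_\epsilon^3(t) = J_1^\epsilon(t) + J_2^\epsilon(t)$ with
\begin{equation*}
\begin{array}{l}
\ds{J_1^\epsilon(t) := \int_0^t \le\{(\delta_0 - A) e^{(t-s)\frac{A}{\epsilon}} N_{\delta_0}\le[\Sigma(s)\sqrt{B}\varphi_Z^\epsilon(s)\r] - \delta_0 \le\langle N_{\delta_0}\le[\Sigma(s)\sqrt{B}\varphi_Z^\epsilon(s)\r], \mu \r\rangle\r\} ds,}\\
\vspace{.1mm}\\
\ds{J_2^\epsilon(t) := \delta_0 \int_0^t \langle N_{\delta_0}\le[\Sigma(s)\sqrt{B}(\varphi_Z^\epsilon(s) - \varphi_Z(s))\r], \mu \rangle ds.}
\end{array}
\end{equation*}

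For the weak-convergence term $J_2^\epsilon$, I would argue exactly as in Step 2 of Lemma \ref{lem:Gterm}. For each $t \in [0,T]$, the map $\psi \mapsto \delta_0\int_0^t \langle N_{\delta_0}[\Sigma(s)\sqrt{B}\psi(s)], \mu\rangle\,ds$ is a bounded linear functional on $L^2(0,T;Z)$ (by \eqref{eq:Neuman_bounded} together with the boundedness of $\Sigma(s)$ and $\sqrt{B}$), so the hypothesis $\varphi_Z^\epsilon \rightharpoonup \varphi_Z$ in $L^2(0,T;V)$ $\mathbb{P}$-a.s. yields $J_2^\epsilon(t) \to 0$ pointwise in $t$. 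Cauchy–Schwarz and the bound $|\varphi_Z^\epsilon|_{L^2(0,T;Z)}^2 \leq M$ provide a uniform-in-$\epsilon$ Hölder estimate in $t$ (of order $\sqrt{h}$), upgrading pointwise to uniform convergence on $[0,T]$; dominated convergence then gives $\mathbb{E}\sup_{t \leq T}|J_2^\epsilon(t)|_{H_\mu}^p \to 0$.

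For the averaging term $J_1^\epsilon$, the crucial observation is that the co-normal condition forces $\mathcal{A}\,1 = 0$, so that $1 \in D(A^\alpha)$ with $(\delta_0 - A)^\alpha \cdot 1 = \delta_0^\alpha \cdot 1$ for every $\alpha \geq 0$; dually, invariance of $\mu$ (i.e.\ $A^\ast m = 0$) yields $\langle (\delta_0 - A)^\alpha h, \mu\rangle = \delta_0^\alpha \langle h, \mu\rangle$. Fix $\rho \in (0,1)$ and use the factorization (as in the proof of Proposition \ref{prop:wellposed}) $(\delta_0 - A) e^{rA/\epsilon} N_{\delta_0} = (\delta_0 - A)^{(1+\rho)/4} e^{rA/\epsilon} S_\rho$, with $S_\rho$ defined in \eqref{sro} and $S_\rho \in \mathcal{L}(Z,H)$. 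Setting $y^\epsilon(s) := S_\rho[\Sigma(s)\sqrt{B}\varphi_Z^\epsilon(s)] \in H$ and $\bar y^\epsilon(s) := y^\epsilon(s) - \langle y^\epsilon(s),\mu\rangle \cdot 1$, the duality identity gives $\delta_0\langle N_{\delta_0}[\Sigma(s)\sqrt{B}\varphi_Z^\epsilon(s)], \mu\rangle = \delta_0^{(1+\rho)/4}\langle y^\epsilon(s),\mu\rangle$, and $(\delta_0 - A)^{(1+\rho)/4} e^{rA/\epsilon}$ acts on the constant part of $y^\epsilon(s)$ exactly as multiplication by the same factor $\delta_0^{(1+\rho)/4}$. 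The constants cancel and we are left with
\begin{equation*}
J_1^\epsilon(t) = \int_0^t (\delta_0 - A)^{(1+\rho)/4} e^{(t-s)A/\epsilon}\, \bar y^\epsilon(s)\, ds.
\end{equation*}

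Combining the embedding $H \hookrightarrow H_\mu$, the analytic-semigroup bound $|(\delta_0 - A)^{(1+\rho)/4} e^{rA/\epsilon}|_{\mathcal{L}(H)} \leq c\,\epsilon^{(1+\rho)/4} r^{-(1+\rho)/4}$, and $|\bar y^\epsilon(s)|_H \leq c |\varphi_Z^\epsilon(s)|_Z$ (from $S_\rho \in \mathcal{L}(Z,H)$ and the embedding controlling the constant projection), Cauchy–Schwarz in $s$ yields the pathwise bound $\sup_{t \leq T}|J_1^\epsilon(t)|_{H_\mu} \leq c_{T,M}\,\epsilon^{(1+\rho)/4}$, the time singularity being integrable precisely because $\rho < 1$. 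Raising to the $p$-th power and taking expectations concludes the proof. The main obstacle is the algebraic rearrangement that makes the constants cancel cleanly: the subtracted term $\delta_0 \langle N_{\delta_0}[\cdot], \mu\rangle$ and the $(\delta_0-A)$-term inside the first integral are structurally incomparable at face value, and it is the duality $A^\ast m = 0$ together with the fractional factorization of $(\delta_0-A) N_{\delta_0}$ that converts their difference into an analytic-semigroup-smoothed fractional power acting on a $\mu$-mean-zero element, where the standard smoothing estimate alone — with no actual use of the ergodic rate $\gamma$ — already produces the decay $\epsilon^{(1+\rho)/4}$.
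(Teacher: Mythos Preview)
Your decomposition $I_\epsilon^3 = J_1^\epsilon + J_2^\epsilon$ and your treatment of $J_2^\epsilon$ match the paper's exactly. The algebraic cancellation you perform for $J_1^\epsilon$ is also correct: using $A^\ast m = 0$ and the $S_\rho$-factorization, the constant part of $y^\epsilon(s)$ does cancel against $\delta_0\langle N_{\delta_0}[\cdot],\mu\rangle$, leaving
\[
J_1^\epsilon(t) = \int_0^t (\delta_0 - A)^{(1+\rho)/4}\, e^{(t-s)A/\epsilon}\, \bar y^\epsilon(s)\, ds.
\]

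The gap is in the analytic estimate you then invoke. The bound
\[
\big\|(\delta_0 - A)^{(1+\rho)/4} e^{rA/\epsilon}\big\|_{\mathcal{L}(H)} \leq c\,\epsilon^{(1+\rho)/4}\, r^{-(1+\rho)/4}
\]
is false. Because $A\cdot 1 = 0$, the operator $(\delta_0 - A)^{(1+\rho)/4} e^{tA}$ acts on constants as multiplication by $\delta_0^{(1+\rho)/4}$, so its $\mathcal{L}(H)$-norm is bounded below by $\delta_0^{(1+\rho)/4}$ for every $t$. The correct global estimate is only $c\big(1 + (r/\epsilon)^{-(1+\rho)/4}\big)$, and with this the Cauchy--Schwarz step gives $\int_0^t\big[1 + ((t-s)/\epsilon)^{-(1+\rho)/2}\big]ds \sim t$, which does \emph{not} vanish as $\epsilon\to 0$.

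You might object that $\bar y^\epsilon(s)$ has $\mu$-mean zero, so the constant mode is absent. True, but obtaining a decaying bound for $(\delta_0-A)^{(1+\rho)/4} e^{tA}$ \emph{restricted to the mean-zero subspace} is exactly the content of the spectral gap Hypothesis~\ref{hyp:longterm}; there is no way to get that decay from analytic smoothing alone. Your closing remark that the argument works ``with no actual use of the ergodic rate $\gamma$'' is therefore precisely where the proof breaks. The paper's route for $J_1^\epsilon$ keeps the mean-subtraction inside the $H_\mu$-norm, applies \eqref{eq:longterm} to extract the factor $e^{-\gamma(t-s)/(2\epsilon)}$, and only then uses the fractional-power smoothing on the remaining half of the semigroup; the exponential factor is what makes the $ds$-integral of order $\epsilon$.
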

\begin{proof}
We can rewrite $I^3_\epsilon$ as follows.
\[\begin{array}{l}
\ds{I^3_\epsilon(t)  = \le( (\delta_0 - A) \int_0^t e^{(t-s)\frac A\e} N_{\delta_0}[\Sigma(s)\sqrt{B}\varphi^\epsilon_Z(s)]ds - \delta_0 \int_0^t \langle N_{\delta_0}[\Sigma(s)\sqrt{B}\varphi^\epsilon_Z(s)],\mu \rangle ds  \r)}\\
\vspace{.1mm}\\
\ds{ +\delta_0 \int_0^t \langle N_{\delta_0}[\Sigma(s)\sqrt{B}(\varphi^\epsilon_Z(s) - \varphi_Z(s))],\mu \rangle ds  =: J_1^\epsilon(t)+J_2^\epsilon(t).}
\end{array}\]
 Concerning $J_1^\epsilon$, the invariance of $\mu$ gives  
\[\begin{array}{l}
\ds{ |J_1^\epsilon(t)|_{H_\mu}  =  \le| \int_0^te^{(t-s) \frac A{2\e}}  (\delta_0 - A) e^{(t-s)\frac A{2\e}} N_{\delta_0}[\Sigma(s)\sqrt{B}\varphi^\epsilon_Z(s)]ds\r. }\\
\vspace{.1mm}\\
\ds{ \le. - \int_0^t \langle (\delta_0 - A) e^{(t-s) \frac A{2\e}} N_{\delta_0}[\Sigma(s)\sqrt{B}\varphi^\epsilon_Z(s)],\mu \rangle ds  \r|_{H_\mu}
}\\
\vspace{.1mm}\\
\ds{   \leq c \int_0^t e^{-\frac{\gamma (t-s)}{2\epsilon}} \le| (\delta_0-A)e^{(t-s)\frac A{2\e}}N_{\delta_0}[\Sigma(s)\sqrt{B} \varphi^\epsilon_Z(s)] \r|_{H_\mu} ds.}
\end{array}\]
Then, thanks to $\eqref{eq:Srho_identity}$ and the boundedness of the operator $S_\rho$ defined in \eqref{sro},  for any $\rho > 0$, we have
\[\begin{array}{l}
\ds{\le| (\delta_0-A) e^{(t-s) \frac A{2\e}}N_{\delta_0}[\Sigma(s)\sqrt{B} \varphi^\epsilon_Z(s)] \r|_{H_\mu}  \leq c \le[1 + \le((t-s)/\e \r)^{-\frac{1+\rho}4} \r] |\varphi^\epsilon_Z(s)|_Z.}
\end{array}\]
Hence, if $\rho < 1$, we obtain 
\begin{equation}\label{eq:Slemma_1}
 \mathbb{E} \sup_{0 \leq t \leq T}|J_1^\epsilon(t)|_{H_\mu}^p  \leq c \le( \int_0^T e^{-\frac {\gamma t}\epsilon}\le[\le(t/\e
\r)^{-\frac{1+\rho}2}+1\r] ds\r)^{\frac p2} \mathbb{E}\,  |\varphi^\epsilon_Z|_{L^2(0,T;Z)} ^{\frac p2}\leq c_M\, \epsilon^{\frac p2}.
\end{equation}
To estimate $J_2^\e(t)$, we proceed as in Lemma \ref{lem:Gterm} and for every $\psi \in\,L^2(0,T;Z)$ we define
\begin{equation*}
\Lambda_{\psi}(t):=\delta_0 \int_0^t \langle N_{\delta_0} [\Sigma(s)\sqrt{B} \psi(s)], \mu \rangle ds.
\end{equation*}
Then the family $\{\Lambda_{\varphi^\epsilon_Z}\}_{\epsilon \in (0,1]}$ is uniformly equi-continuous in $[0,T]$, since 
\[\begin{array}{l}
\ds{|\Lambda_{\varphi^\epsilon_Z}(t+h)-\Lambda_{\varphi^\epsilon_Z}(t)|  = \le| \delta_0 \int_t^{t+h} \langle N_{\delta_0}[\Sigma(s) \sqrt{B} \varphi^\epsilon_Z(s)],\mu \rangle ds\r| 
}\\
\vspace{.1mm}\\
\ds{ \leq \delta_0 \sqrt{h} \le(\int_t^{t+h} \int_\mathcal{O} |N_{\delta_0}[\Sigma(s)\sqrt{B} \varphi^\epsilon_Z(s)](\xi)|^2 d\mu(\xi) \  ds \r)^{1/2}}\\
\vspace{.1mm}\\
\ds{  \leq c\, \delta_0\, \sqrt{h} | N_{\delta_0}[\Sigma(\cdot)\varphi^\epsilon_Z]|_{L^2(0,T;H_\mu)}  \leq  c_M \sqrt{h},}
\end{array}\]
where the last inequality holds $\mathbb{P}$-a.s., for some random variable $c_M \in\,L^1(\Omega)$. In addition, for fixed $t\in[0,T]$, the linear functional $\psi \in\,L^2(0,T;Z) \mapsto \Lambda\psi(t) \in\,\mathbb{R}$ is  bounded. Hence by the weak convergence of the sequence $\{\varphi^\epsilon_Z\}$ to $\varphi_Z$, we have 
\begin{equation*}
\lim_{\epsilon \to 0} \Lambda_{\varphi^\epsilon_Z}(t) = \Lambda_{\varphi_Z}(t)= \delta_0 \int_0^t\langle N_{\delta_0}[\Sigma(s)\sqrt{B} \varphi_Z(s)] ,\mu \rangle ds.
\end{equation*}
Moreover, this convergence is uniform in $t \in\,[0,T]$, so that
\begin{equation*}
\lim_{\epsilon \to 0} \mathbb{E} \sup_{0 \leq t \leq T} |J_2^\epsilon(t)|^p = \lim_{\epsilon \to 0} \sup_{0 \leq t \leq T}|\Lambda_{\varphi^\epsilon_Z}(t)-\Lambda_{\varphi_Z}(t)|^p = 0
\end{equation*}
from the dominated convergence theorem. This, together with $\eqref{eq:Slemma_1}$, concludes the proof.
\end{proof}

\section{Appendix B: Proofs of Lemmas in Section \ref{sec:exit}}\label{sec:exit_lemmas}

\begin{proof}[Proof of Lemma \ref{lem:18}]
Fix $\eta > 0$. We first construct a collection of paths $\{z^x\}_{x \in D} \subset C([0,T];\R)$ that leave the domain with a close to minimal energy. 

Let $\rho > 0$ such that $B_\rho\subset D$. Due to Condition  (ii) in Hypothesis \ref{hyp:exit}, we can fix $T_1$ large enough that $u^x(T_1) \in B_\rho$, for any $x \in \bar{D}$, where $u^x$ is the solution of $\eqref{eq:ODE}$. Thus, we  set $z^x(t) = u^x(t)$ on the interval $[0,T_1]$. Next, we set
\begin{equation*}
z^x(t) = z^x(T_1)(T_1+1-t) , \qquad \text{ if } t \in [T_1,T_1+1],
\end{equation*}
so that $z^x(T_1+1) = 0$. Now, due to $\eqref{eq:Vbar}$, there exists some $T_2 >0$ and some path $v(t) \in C([0,T_2],\R)$ such that $v(0) = 0$, $v(T_2) \notin \bar{D}$ and $\ds I_{0,T_2}^0(v) < \bar{V}(D)+\eta/4$. We then set $z^x(T_1+1+t) = v(t)$ for $t \in [0,T_2]$. Hence, upon defining $T^* = T_1+T_2+1$, we have
\[\begin{array}{l}
\ds{I_{0,T^*}^x(z^x)  = I_{0,T_1}^x(z^x)+I_{T_1,T_1+1}(z^x)+I_{T_1+1,T^*}^0(z^x)}\\
\vspace{.1mm}\\
\ds{ \leq c\, \int_{T_1}^{T_1+1}|{{z}^x}^\prime(t)-\bar{F}(z^x(t))|^2dt + (\bar{V}(D)+\eta/4)\leq c\,\rho^2+\le(\bar{V}(D)+\eta/4\r).}
\end{array}\]
Thus,  taking $\rho$ small enough, we obtain $I_{0,T}^x(z^x) < \bar{V}(D) + \eta/2$. We note that all of these paths $\{z^x\}_{x \in D}$ agree on the interval $[T_1+1,T^*]$ and exit the domain on this time interval. Let us now denote
\begin{equation*}
h := \sup_{T_1+1 \leq t \leq T^*} \mathrm{dist}_{H_\mu}(z^x(t),\bar{D}) > 0.
\end{equation*}
To prove the lemma, we pick any $0<\delta < T_1+1$ and define the open set
\begin{equation*}
\Psi = \bigcup_{x \in D} \le\{u \in C([\delta,T^*];H_\mu): \sup_{\delta \leq t \leq T^*}|u(t)-z^x(t)|_{H_\mu} < h \r\}.
\end{equation*}
Then, thanks to Theorem \ref{theor:main} and bound $\eqref{eq:LDP_lower}$, there exists  $\epsilon_0>0$ such that for any $\epsilon < \epsilon_0$,
\[\begin{array}{l}
\ds{\inf_{x \in D} \mathbb{P}(\tau_x^\epsilon < T^*)  \geq 
\inf_{x \in D} \mathbb{P}(u^x_\epsilon \in \Psi)  
\geq \exp \le( -\frac{1}{\gamma(\epsilon)}\le[\,\sup_{x \in D} \inf_{\varphi \in \Psi} I_{\delta,T^*}^x(\varphi)+ \frac{\eta}{2} \r] \r)}\\
\vspace{.1mm}\\
\ds{ \geq \exp \le(-\frac{1}{\gamma(\epsilon)}\le[\,\sup_{x \in D}
 I_{\delta,T^*}^x(z^x|_{[\delta,T^*]}) + \frac{\eta}{2}  \r] \r\} \geq \exp \le\{-\frac{1}{\gamma(\epsilon)}(\bar{V}(D)+\eta) \r).}
\end{array}\]
\end{proof}

\begin{proof}[Proof of Lemma \ref{lem:19}]
In this lemma, the behavior of the process near $t = 0$ is not a concern and so the same proof as in  Lemma 5.7.19 in \cite{dembo2009large} holds.
\end{proof}

\begin{proof}[Proof of Lemma \ref{lem:22}]
Fix some $x \in D$ and let $\rho>0$ be such that $B_\rho \subset D$. If $x \in\,B_\rho$, there nothing to prove. Thus, we can assume that $x\notin B_\rho$. 

We denote $T_x:= \inf \{t \geq 0: u^x(t) \in B_{\rho/2}\}$ and $\Delta_x := \inf_{t \geq 0} \mathrm{dist}_{H_\mu}(u^x(t),\partial D)$.
We clearly have $T_x>0$ and, due to Condition (iii) of Hypothesis \ref{hyp:exit}, we have $\Delta_x>0$. Moreover,  again thanks to Condition (iii) of Hypothesis \ref{hyp:exit}, we have
\begin{equation*}
d_x:=\inf_{t \geq 0} \mathrm{dist}_{H_\mu}(e^{tA}x,\partial D) > 0.
\end{equation*}
 This implies that for every $0<\d<T_x$
\begin{equation}
\label{eq:lem22_1}
\begin{array}{l}
\ds{ \mathbb{P}\le(u_\epsilon^x(\sigma_\epsilon^x(\rho)) \in \partial D\r)}\\
 \vspace{.1mm}\\
 \ds{  \leq \mathbb{P} \le(\sup_{0 \leq t \leq \delta} |u_\epsilon^x(t) -e^{t \frac A\e}x|_{H_\mu} > d_x \r)  + \mathbb{P}\le(\sup_{\delta \leq t \leq T_x} |u_\epsilon^x(t)-u^x(t)|_{H_\mu} > \Delta_x\wedge \rho/2\r).}
 \end{array}
\end{equation}
Now, thanks to  $\eqref{eq:Bconv_bound}$ and $\eqref{eq:Qconv_bound}$ and Lemma \ref{lem:Fterm}, for every $T>0$ there exists some function $r_T(\epsilon)$ going to $0$, as $\epsilon \to 0$, such that
\begin{align*}
\mathbb{E} \sup_{\delta \leq t \leq T} |u_\epsilon^x(t) - u^{x}(t)|_{H_\mu} \leq c\,e^{-\frac{\gamma \delta}\epsilon} |x|_{H_\mu} + r_T(\epsilon) + c_T\,\int_\delta^T \mathbb{E}\sup_{\delta \leq s \leq t}|u_\epsilon^x(s) - u^x(s)|_{H_\mu}dt.
\end{align*}
Then, using Gronwall's Lemma, we have
\begin{equation}\label{eq:lem22_2}
\mathbb{E} \sup_{\delta \leq t \leq T}|u_\epsilon^x - u^x|_{H_\mu} \leq c\,\le(e^{-\frac{\gamma \delta}\epsilon}|x|_{H_\mu} + r_T(\epsilon)\r)e^{c_T T}.
\end{equation}
Meanwhile, we can estimate the second term in $\eqref{eq:lem22_1}$ by using the bounds $\eqref{eq:Bconv_bound}$, $\eqref{eq:Qconv_bound}$ and $\eqref{eq:apriori}$ to obtain
\begin{equation}
\label{eq:lem22_3}
\begin{array}{l}
\ds{
 \mathbb{E} \sup_{0 \leq t \leq \delta} |u_\epsilon^x(t)  - e^{t \frac A\e}x |_{H_\mu} \leq c\, \sqrt{\gamma(\epsilon)} + \mathbb{E}\sup_{0 \leq t \leq \delta} \le|\int_0^t e^{(t-s)\frac A\e}F(s,u_\epsilon^x(s))ds \r|_{H_\mu} }\\
\vspace{.1mm}\\
\ds{  \leq c\,\le( \sqrt{\gamma(\epsilon)}+\delta \le(1+ \mathbb{E}|u_\epsilon^x|_{C([0,\delta];H_\mu)}\r)   \r) \leq c\,(\sqrt{\gamma(\epsilon)} + \delta).}
\end{array}
\end{equation}
This, together with \eqref{eq:lem22_1} and \eqref{eq:lem22_2}, implies that for every $\d\in\,(0,T_x)$
\[\mathbb{P}\le(u_\epsilon^x(\sigma_\epsilon^x(\rho)) \in \partial D\r)\leq c_T\,\le(\sqrt{\gamma(\e)}+\d+r_T(\e)+e^{-\frac {\gamma \d}{\e}}\,|x|_{H_\mu}\r).\]
Thus, by taking $\delta = \epsilon^r$ for some $ 0 < r < 1$, we get
\[\mathbb{P}\le(u_\epsilon^x(\sigma_\epsilon^x(\rho)) \in \partial D\r)=0.\]

\end{proof}

\begin{proof}[Proof of Lemma \ref{lem:23}]
We have
\begin{equation*}
u_\epsilon^x(t) - x = e^{t \frac A\e} x - x + \int_0^t e^{(t-s)\frac A\e} F(s,u^x_\epsilon(s))ds + \a(\epsilon)\, w_{A,Q}^\epsilon(u^x_\epsilon)(t) + \beta(\epsilon)\, w_{A,B}^\epsilon(t).
\end{equation*}
Since the semigroup $e^{tA}$ acts as a contraction on $H_\mu$, we have that $|e^{t \frac A\e}x-x|_{H_\mu} \leq 2 |x|_{H_\mu}$. Next we observe that, for $t \in [0,T]$,
\[\begin{array}{l}
\ds{\le|\int_0^t e^{(t-s)\frac A\e} F(s,u^x_\epsilon(s))ds \r|_{H_\mu}  \leq  \int_0^t |F(s,u_\epsilon^x(s))|_{H_\mu}ds }\\
\vspace{.1mm}\\
\ds{ \leq c\, T\le(1+ \sup_{0 \leq s \leq t}|u^x_\epsilon(s)|_{H_\mu}\r) \leq c\,T \le(1 + |x|_{H_\mu}+ \sup_{0 \leq s \leq T} |u^x_\epsilon(s)-x |_{H_\mu} \r).}
\end{array}\]
Therefore, if $x \in B_\rho$, we can find a $T_\rho>0$ small enough that
\begin{align*}
\sup_{0 \leq t \leq T} |u_\epsilon^x(t)-x|_{H_\mu} \leq \frac{7\rho}{3} + \a(\e)\,\sup_{0 \leq t \leq T_\rho}|w_{A,Q}^\epsilon(u^x_\epsilon)(t)|_{H_\mu} +  \beta(\epsilon)\,\sup_{0 \leq t \leq T_\rho}|w_{A,B}^\epsilon(t)|_{H_\mu}.
\end{align*}
Hence, 
\[\begin{array}{l}
\ds{\mathbb{P}\le(|u_\epsilon^x - x |_{C([0,T];H_\mu)} \geq 3\rho\r)}\\
\vspace{.1mm}\\
\ds{\leq
\mathbb{P}\le(\a(\epsilon)\, |w_{A,Q}^\epsilon(u_\epsilon^x)|_{C([0,T];H_\mu)} \geq \rho/3\r)+\mathbb{P}\le(\beta(\epsilon)\,|w_{A,B}^\epsilon|_{C([0,T];H_\mu)} \geq \rho/3\r ).}
\end{array}\]
Thanks to Condition (i) of Hypothesis \ref{hyp:exit}, the integrand of $w^{A,Q}(u_\epsilon^x)$ is bounded, so that we can use the exponential estimates for the stochastic convolution (see \cite{peszat}). In particular, for every $T>0$ we have
\begin{equation*}
\mathbb{P} \le( \sup_{0 \leq t \leq T} |w_{A,Q}^\epsilon(u_\epsilon^x) (t)|_{H_\mu} \geq \frac{\rho}{3\,\a(\e)} \r) \leq c\exp\le(-\frac{\rho^2}{3c_T\,\a(\e)} \r)\leq c\,\exp\le(-\frac{\rho^2}{3\,c_t\,\gamma(\e)}\r),
\end{equation*}
where $c_T$ is a constant going to $0$, as $T \to 0$. We obtain a similar estimate for $w_{A,B}^\epsilon$, with $\a(\e)$ replaced by $\beta(\e)$. All together, for every $T\leq T_\rho$ we have
\begin{equation*}
\gamma(\epsilon) \log \sup_{x \in B_\rho} \mathbb{P}(\sup_{0 \leq t \leq T} |u_\epsilon^x(t) - x|_{H_\mu}\geq 3\rho) \leq c \,\gamma(\epsilon) - \frac{\rho^2}{c_T}.
\end{equation*}
Upon taking $T$ small enough, this gives us the desired result.
\end{proof}

\begin{proof}[Proof of Lemma \ref{lem:21}]
We modify the proof of Lemma 5.7.21 in \cite{dembo2009large}  to account for the behavior of $u_\epsilon^x(t)$ near $t = 0$. Let $N \subset \partial D$ be a closed set. Define the closed set 
\[\Psi_{\delta,T}(N):= \{u \in C([0,T];H_\mu): \exists \ t \in [\delta,T] \text{ such that } u(t) \in N \}.\]
 Then, for any $T > 0$ and $\delta < T$,
\begin{equation}\label{eq:lem21_1}
\mathbb{P}(u_\epsilon^x(\sigma_\epsilon^x(\rho)) \in N) \leq \mathbb{P}(\tau_\epsilon^x< \delta)+ \mathbb{P}(\sigma_\epsilon^x(\rho) > T ) + \mathbb{P}(u_\epsilon^x \in \Psi_{\delta,T}(N)).
\end{equation}
To bound the first term from above, we notice that
\begin{align*}
\sup_{x \in \partial B_{2 \rho}} \mathbb{P}(\tau_x^\epsilon < \delta) \leq \sup_{x \in \partial B_{2 \rho}} \mathbb{P}\le(\sup_{0 \leq t \leq \delta} |u_\epsilon^x(t)-x|_{H_\mu} \geq \mathrm{dist}_{H_\mu}(x,\partial D)\r).
\end{align*}
Now, let $\rho>0$ be small enough that $\inf_{x \in \partial B_{2\rho}} \mathrm{dist}(x,\partial D) \geq 6 \rho$. Then, by Lemma \ref{lem:23}, the inequality above implies that for any $\eta > 0$ there exists  $\delta > 0$ small enough that
\begin{equation}\label{eq:lem21_2}
\limsup_{\epsilon \to 0} \gamma(\epsilon) \log \sup_{x \in \partial B_{2 \rho}} \mathbb{P}(\tau_x^\epsilon < \delta) \leq - \eta.
\end{equation}
Next, thanks to Lemma \ref{lem:19}, we can find $T>0$ large enough that
\begin{equation}\label{eq:lem21_3}
\limsup_{\epsilon \to 0} \gamma(\epsilon) \log \sup_{x \in \partial B_{2\rho}} \mathbb{P}(\sigma_\epsilon^x(\rho)> T) < -\eta.
\end{equation} 
Since the set $\Psi_{\delta,T}(N)$ is closed, we can use the large deviation principle and equation $\eqref{eq:LDP_upper}$ to obtain that
\begin{equation}\label{eq:lem21_4}
\limsup_{\epsilon \to 0} \gamma(\epsilon) \log \sup_{x \in \partial B_{2\rho}} \mathbb{P}(u_\epsilon^x \in \Psi_{\delta,T}(N)) \leq - \inf_{x \in \partial B_{2\rho}} I_{\delta,T}^x(\Psi_{\delta,T}(N)).
\end{equation}
On the other hand, for fixed $x$, we have that
\begin{equation*}
I_{\delta,T}^x(\Psi_{\delta,T}(N)) = \inf_{\varphi \in \Psi_{\delta,T}(N)} I^x_{\delta,T}(\varphi) \geq \inf_{\varphi \in \Psi_{0,T}(N)} I^x_{0,T}(\varphi),
\end{equation*}
because every path hitting $N$ in the interval $[\delta,T]$ has an extension to a path on $[0,T]$ starting at $x$. 

Next, we notice that for any $x \in \partial B_{2\rho}$,
\begin{equation*}
V(x) + \inf_{ \varphi \in \Psi_{0,T}(N)} I^x_{0,T}(\varphi) \geq \inf_{z \in N} V(z),
\end{equation*}
since any path on the left hand side is also considered in the infima on the right hand side. Now, due to Hypotheses \ref{hyp:exit}, it is clear that $\lim_{x \to 0} V(x) = 0$. Hence, for any $\gamma > 0$, if we choose $\rho>0$ small enough then, thanks to $\eqref{eq:lem21_4}$, we have
\begin{equation}\label{eq:lem21_5}
\limsup_{\epsilon \to 0} \gamma(\epsilon) \log \sup_{x \in \partial B_{2\rho}} \mathbb{P}(u_\epsilon^x \in \Psi_{\delta,T}(N)) \leq \gamma -  \inf_{z \in N} V(z).
\end{equation}
Due to $\eqref{eq:lem21_1}$, $\eqref{eq:lem21_2}$, $\eqref{eq:lem21_3}$, $\eqref{eq:lem21_5}$ and the arbitrariness of $\gamma$, the result then follows by picking $\eta > \inf_{z \in N} V(z)$.
\end{proof}


\end{document}